\numberwithin{equation}{section}
\DeclareRobustCommand{\erase}{\bgroup\markoverwith{\textcolor{red}{\rule[.5ex]{2pt}{0.4pt}}}\ULon}
\theoremstyle{plain}
\newtheorem{thm}{Theorem}[section]
\newtheorem{cor}[thm]{Corollary}
\newtheorem{prop}[thm]{Proposition}
\newtheorem{lem}[thm]{Lemma}
\theoremstyle{definition}
\newtheorem{defn}[thm]{Definition}
\DeclareMathOperator{\conv}{conv}
\DeclareMathOperator{\init}{in}
\DeclareMathOperator{\rk}{rank}
\DeclareMathOperator{\md}{mod}
\newcommand{\FF}{\mathbb{F}}
\newcommand{\QQ}{\mathbb{Q}}
\newcommand{\RR}{\mathbb{R}}
\newcommand{\ZZ}{\mathbb{Z}}
\newcommand{\calB}{\mathcal{B}}
\newcommand{\calC}{\mathcal{C}}
\newcommand{\calG}{\mathcal{G}}
\newcommand{\grob}{\mathscr{G}}
\newcommand{\calI}{\mathcal{I}}
\newcommand{\calM}{\mathcal{M}}
\newcommand{\calS}{\mathcal{S}}
\newcommand{\calT}{\mathcal{T}}
\newcommand{\etilde}{\widetilde{e}}
\DeclareMathOperator{\SEP}{\Sigma}
\newcommand{\rob}[1]{\par \noindent
  \framebox{\begin{minipage}[c]{0.95 \textwidth}\color{blue} ROB'S NOTE:
      #1 \color{black}\end{minipage}}\par}      
\title{On the Ehrhart Theory of Generalized Symmetric Edge Polytopes}
\author[R. Davis]{Robert Davis}
\address{Department of Mathematics, Colgate University, Hamilton, NY, USA }
\email{rdavis@colgate.edu}
\author[A. Higashitani]{Akihiro Higashitani}
\address{Department of Pure and Applied Mathematics, Graduate School of Information
Science and Technology, Osaka University, Osaka, Japan}
\email{higashitani@ist.osaka-u.ac.jp}
\author[A. Higashitani]{Hidefumi Ohsugi}
\address{Department of Mathematical Sciences, School of Science, Kwansei Gakuin University, Sanda, Hyogo 669-1337, Japan}
\email{ohsugi@kwansei.ac.jp}
\date{\today}
\begin{document}

\begin{abstract}
    The symmetric edge polytope (SEP) of a (finite, undirected) graph is a centrally symmetric lattice polytope whose vertices are defined by the edges of the graph.
    SEPs have been studied extensively in the past twenty years.
    Recently, T\'othm\'er\'esz and, independently, D'Al\'i, Juhnke-Kubitzke, and Koch generalized the definition of an SEP to regular matroids, which are the matroids that can be represented by totally unimodular matrices.
    Generalized SEPs are known to have symmetric Ehrhart $h^*$-polynomials, and Ohsugi and Tsuchiya conjectured that (ordinary) SEPs have nonnegative $\gamma$-vectors.    

    In this article, we use combinatorial and Gr\"obner basis techniques to extend additional known properties of SEPs to generalized SEPs.
    Along the way, we show that generalized SEPs are not necessarily $\gamma$-nonnegative by providing explicit examples.
    We prove that the polytopes we construct are ``nearly'' $\gamma$-nonnegative in the sense that, by deleting two particular elements from the matroid, one obtains SEPs for graphs that are $\gamma$-nonnegative.
    This provides further evidence that Ohsugi and Tsuchiya's conjecture holds in the ordinary case.
\end{abstract}

\maketitle

\section{Introduction}
    A \emph{lattice polytope} $P$ in $\RR^n$ is the convex hull of finitely many points in $\ZZ^n$.
    That is, $P$ is a lattice polytope if and only if there are some $v_1,\dots,v_k \in \ZZ^n$ such that
    \[
        P = \conv\{v_1,\dots,v_k\} = \left\{
        \left.\sum_{i=1}^k \lambda_iv_i \ \right| \ \lambda_1,\dots,\lambda_k \geq 0, \, \sum_{j=1}^k \lambda_j = 1\right\}.
    \]
    The main objects of study in this paper will be lattice polytopes arising from (finite, undirected) graphs and matroids.    
    First, given an undirected graph $G = ([n],E)$ where $[n] = \{1,\dots,n\}$, the \emph{symmetric edge polytope} (or \emph{SEP}) associated to $G$ is
    \[
        \Sigma(G) = \conv\{\pm(e_i - e_j) \in \RR^n \mid ij \in E\}.
    \]
    This polytope was introduced in \cite{MHNOH} and has been the object of much study for its algebraic and combinatorial properties \cite{DDM, HJM, OhsugiHibi, OhsugiTsuchiya21}, as well as its applications to problems arising from engineering \cite{Chen, ChenDavisKorchevskaia, ChenDavisMehta}.

    SEPs have recently been generalized to regular matroids in two ways.
    Suppose $A$ is a totally unimodular matrix representing a matroid $\calM$.
    T\'othm\'er\'esz \cite{Tothmeresz}, working within the context of oriented matroids, defined the \emph{root polytope} of $\calM$ to be the convex hull of the columns of $A$.
    Independently, D'Al\`i, Juhnke-Kubitzke, and Koch \cite{DJKK} defined the \emph{generalized symmetric edge polytope} (or \emph{generalized SEP}) associated to $\calM$ as 
    \[
        \Sigma(\calM)= \conv\{\pm u \mid u \text{ is a column of }A\}.
    \]
    Thus, $\Sigma(\calM)$ may be considered the root polytope of the matroid represented by the matrix $[A \mid -A]$, which is also totally unimodular.
    Because we will not be working with oriented matroids, and because we are focused on the polytopes $\Sigma(\calM)$ in this work, we will follow the terminology and notation in \cite{DJKK}.
    
    Although the definition of $\Sigma(\calM)$ depends on the choice of $A$, the polytope is well-defined up to unimodular equivalence (\cite[Proposition 3.2]{Tothmeresz} and \cite[Theorem~3.2]{DJKK}). 
    Unimodularly equivalent lattice polytopes have the same Ehrhart polynomial and consequently, the same Ehrhart series.
    It is common, then, for Ehrhart theorists to identify unimodularly equivalent lattice polytopes.
    In \cite{DJKK}, the authors proved that many properties held by ordinary SEPs also hold for generalized SEPs. 
    Further, this more general setting allowed the authors to show that two SEPs are unimodularly equivalent if and only if the underlying matroids are isomorphic \cite[Theorem 4.6]{DJKK}.
    In addition to these aspects of SEPs, much attention has been given to their Ehrhart theory, which we briefly introduce here.
    
    The function $m \mapsto |mP \cap \ZZ^n|$, defined on the positive integers, agrees with a polynomial $L_P(m)$ of degree $\dim(P)$, called the \emph{Ehrhart polynomial} of $P$.
    Consequently, the \emph{Ehrhart series} of $P$, which is the (ordinary) generating function for the sequence $\{L_P(m)\}_{m \geq 0}$, may be written as the rational function
    \[
        E(P;t) = \sum_{m \geq 0} L_P(m)t^m = \frac{h_0^* + h_1^*t + \cdots + h_d^*t^d}{(1-t)^{\dim(P)+1}}
    \]
    for some $d \leq \dim(P)$ with $h_d^* \neq 0$.
    We denote the numerator by $h^*(P;t)$ and refer to it as the \emph{$h^*$-polynomial} of $P$.
    The \emph{$h^*$-vector} of $P$, $h^*(P)$, is the list of coefficients $h^*(P) = (h^*_0,\dots,h^*_d)$ of $h^*(P;t)$.

    The $h^*$-vectors of polytopes have been the focus of much study in recent decades, as they lie within the intersection of polyhedral geometry, commutative algebra, algebraic geometry, combinatorics, and much more (see, e.g., \cite{StanleyGreenBook, sturmfels}).
    For example, when the $h^*$-vector is \emph{symmetric}, meaning $h^*_i = h^*_{d-i}$ for each $i = 0,\dots,d$, then the associated semigroup algebra
    \[
        K[P] = K[x^vz^m \mid v \in mP \cap \ZZ^n] \subseteq K[x_1^{\pm},\dots,x_n^{\pm},z],
    \]
    over a field $K$, where $x^v = x_1^{v_1}\cdots x_n^{v_n}$, is Gorenstein.
    In fact, the converse holds as well.
    Determining when $K[P]$ is Gorenstein can be done entirely using polyhedral geometry, so, from this perspective, there is a complete characterization of when $h^*(P)$ is symmetric.

    On the other hand, a more mysterious property held by some $h^*$-vectors is unimodality.
    We call $h^*(P)$ \emph{unimodal} if there is some index $0 \leq r \leq d$ for which $h^*_0 \leq \cdots \leq h^*_r$ and $h^*_r \geq \cdots \geq h^*_d$.
    Unlike symmetry, there is no complete characterization of when an $h^*$-vector is unimodal.
    There are various sets of sufficient conditions that ensure a polytope has a unimodal $h^*$-vector and various sets of necessary conditions \cite{BraunUnimodality}, but no set of conditions capturing both simultaneously.

    When an $h^*$-vector is already known to be symmetric, new avenues for proving $h^*$-unimodality appear.
    For instance, in this setting, the $h^*$-polynomial may be expressed as
    \begin{equation}\label{eq: hstar gamma equivalence}
        \sum_{i=0}^d h^*_it^i = \sum_{i=0}^{\lfloor d/2 \rfloor} \gamma_it^i(1+t)^{d-2i} = (1+t)^d \sum_{i=0}^{\lfloor d/2 \rfloor} \gamma_i \left( \frac{t}{(1+t)^2}\right)^i
    \end{equation}
    for certain choices of $\gamma_i \in \QQ$.
    This leads to the study of the \emph{$\gamma$-polynomial} of $P$,
    \[
        \gamma(P;t) = \sum_{i=0}^{\lfloor d/2 \rfloor} \gamma_it^i,
    \]
    and of the \emph{$\gamma$-vector} of $P$, $\gamma(P) = (\gamma_0,\dots,\gamma_{\lfloor d/2 \rfloor})$.
    Note that if $\gamma_i \geq 0$ for each $i$, then $h^*_i \geq 0$ for each $i$ as well.
    This is not necessary, though: the $h^*$-vector of $(1,1,1)$, realized by the lattice triangle with vertices $(1,0)$, $(0,1)$, and $(-1,-1)$, is unimodal but has a $\gamma$-vector of $(1,-1)$.
    Nevertheless, $\gamma$-vectors remain the topic of intense study, due in part to wide-open conjectures surrounding their nonnegativity.
    The most famous of these is \emph{Gal's Conjecture}: that the $h$-vector of a flag homology sphere (a pure $n$-dimensional simplicial complex having the same homology groups as the $n$-sphere and
    such that all minimal non-faces have dimension $1$) is $\gamma$-nonnegative. 
    To an Ehrhart theorist, the conjecture states that the $h^*$-polynomial of a lattice polytope with a  flag (all minimal non-faces have dimension $1$), regular, unimodular triangulation is $\gamma$-nonnegative.
    
    In this article, we use combinatorial and Gr\"obner basis techniques to extend additional known properties of SEPs to generalized SEPs.
    Along the way, we show that generalized SEPs are not always $\gamma$-nonnegative by providing an infinite class of explicit examples (Theorem~\ref{thm: not nonnegative}).
    These examples are ``nearly'' $\gamma$-nonnegative in the sense that there are two particular elements of the matroid which, when deleted, result in SEPs for graphs that are $\gamma$-nonnegative.
    We present and prove a formula for their $h^*$-polynomials (Theorem~\ref{thm: h star of gamma}), deduce the normalized volumes of these SEPs (Corollary~\ref{cor: normalized volume}), and compute their $\gamma$-polynomials (Theorem~\ref{thm: gamma vec}).
    This provides further evidence that Ohsugi and Tsuchiya's conjecture holds in the ordinary case.

    A brief structure of the article is as follows.
    In Section~\ref{sec: background} we discuss the necessary background for matroids and the relationships among Ehrhart theory, triangulations of polytopes, and toric ideals.
    Section~\ref{sec: extensions} discusses how various important results for ordinary SEPs extend to generalized SEPs.
    In Section~\ref{sec: counterexample} we present explicit examples of generalized SEPs whose $h^*$-polynomials are not $\gamma$-nonnegative (Theorem~\ref{thm: not nonnegative}) and examine the consequences of making minor variations to them.
    We also state two of the main results of this article: Theorem~\ref{thm: h star of gamma} and Theorem~\ref{thm: gamma vec}.
    In Section~\ref{sec: proof} we both prepare for and present the full proof of Theorem~\ref{thm: h star of gamma}, which follows a combinatorial argument.
    Finally, in Section~\ref{sec: proof2}, we use a generating function argument to prove Theorem~\ref{thm: gamma vec}.

\section{Background}\label{sec: background}

To keep this article as self-contained as possible, this section provides the necessary background on matroids, Ehrhart theory, and generalized SEPs.
For a comprehensive reference on the first two topics, we refer the reader to \cite{Oxley} for matroids and \cite{BeckRobins} for Ehrhart theory. 
Throughout this article we use the convention of writing $A \cup x$ and $A - x$ for the union or deletion, respectively, of $A$ by a one-element set $\{x\}$.

\subsection{Matroids}

A \emph{matroid} $\calM$ is a pair $(E,\calI)$ where $E$ is finite and $\calI \subseteq 2^E$ satisfies the following three properties:
\begin{enumerate}[label=(\roman*)]
    \item $\emptyset \in \calI$;
    \item if $I \in \calI$ and $J \subseteq I$, then $J \in \calI$; and
    \item if $I, J \in \calI$ and $|I| < |J|$, then there is some $x \in J - I$ such that $I \cup x \in \calI$.
\end{enumerate}
The set $E$ is called the \emph{ground set} of $\calM$ and the elements of $\calI$ are called the \emph{independent sets} of $\calM$.

Matroids are useful for unifying various notions of ``independence'' in mathematics.
For example, if $G$ is a (finite) graph, then there is a corresponding matroid $M(G)$ whose ground set is $E(G)$, the edge set of $G$, and whose independent sets are the sets of edges that form acyclic subgraphs of $G$.
This matroid is called the \emph{cycle matroid} of $G$, and any matroid that is isomorphic to $M(G)$ for some $G$ is a \emph{graphic matroid}.

Another fundamental example of a matroid uses the columns of a $k \times n$ matrix $A$ as the ground set.
Here, the independent sets are the columns of $A$ that form linearly independent sets.
This matroid, denoted $M(A)$, is the \emph{vector matroid} of $A$, and any matroid isomorphic to $M(A)$ for some matrix $A$ is called a \emph{representable matroid}.
More precisely, for a field $K$, we say that a matroid is \emph{$K$-representable} if it is isomorphic to a vector matroid whose ground set consists of vectors in $K^k$.
Hence, to say a matroid is representable is to say it is $K$-representable for some field $K$.
On the other hand, a matroid is \emph{regular} if it is $K$-representable over every field $K$.
Importantly, graphic matroids are regular, cographic matroids (that is, matroids whose duals are graphic) are regular, and a matroid is regular if and only if its dual is regular. 

The \emph{rank} of a matroid $\calM = (E,\calI)$ is defined as 
\[
    \rk(\calM) = \max_{I \in \calI} |I|.
\]
When $\calM$ is graphic, $\rk(\calM)$ is the size of a maximal spanning forest for any $G$ satisfying $\calM \cong M(G)$.
When $\calM$ is representable, $\rk(\calM) = \rk(A)$ for any matrix $A$ satisfying $\calM \cong M(A)$.
With these definitions in hand, we can now describe some of the many benefits of working with regular matroids.

\begin{thm}[{\cite[Definition/Theorem 2.3]{DJKK}}]\label{thm:defthm}
    A positive-rank matroid $\calM$ is regular if and only if any of the following hold:
    \begin{enumerate}[label={\rm (\roman*)}]
        \item $\calM$ is representable over every field.
        \item $\calM \cong M(A)$ for some matrix $A$ in which every minor is $\pm 1$ or $0$.
        \item $\calM \cong M(A)$ for some full-rank matrix $A$ in which every maximal minor is $\pm 1$ or $0$.
    \end{enumerate}
\end{thm}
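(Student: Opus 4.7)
The plan is to prove the equivalences along the cycle $(ii)\Rightarrow(iii)\Rightarrow(ii)$, $(ii)\Rightarrow(i)$, and $(i)\Rightarrow(ii)$, treating (ii) as the statement that $\calM$ admits a totally unimodular representing matrix. The first three implications are essentially linear algebra, while $(i)\Rightarrow(ii)$ is the genuinely deep direction and is the main obstacle.

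The implication $(ii)\Rightarrow(iii)$ is almost immediate: starting from a representing matrix $A$ whose every minor lies in $\{0,\pm 1\}$, delete any rows that are linear combinations of the remaining ones. This yields a full-rank matrix $A'$ with $M(A')=M(A)=\calM$, and every minor of $A'$ is a minor of $A$, hence in $\{0,\pm 1\}$. Conversely, for $(iii)\Rightarrow(ii)$, row-reduce a full-rank matrix $A$ with maximal minors in $\{0,\pm 1\}$ to the form $[I_k \mid B]$; row operations of this type do not alter $M(A)$. Any $j\times j$ minor of $[I_k\mid B]$ can be expanded via cofactors along the identity columns it happens to contain, producing, up to sign, a maximal minor of $[I_k\mid B]$, which in turn equals a maximal minor of the original $A$ divided by the (unit) pivot determinant. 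In particular all such minors lie in $\{0,\pm 1\}$, giving (ii).

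For $(ii)\Rightarrow(i)$, suppose $A$ is a totally unimodular matrix with $M(A)=\calM$ over $\QQ$. A subset $S$ of columns is linearly independent over a field $K$ precisely when some $|S|\times|S|$ submatrix of $A_S$ has nonzero determinant in $K$. Since such a determinant is an integer in $\{0,\pm 1\}$, it vanishes in $K$ if and only if it vanishes in $\ZZ$; thus the matroid of $A$ over $K$ coincides with $\calM$ for every field $K$, proving (i).

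The remaining direction $(i)\Rightarrow(ii)$ is Tutte's theorem on regular matroids, and is the main obstacle. The standard strategy is to first observe that a regular matroid is in particular binary, take a representation over $\FF_2$, and then show that the nonzero entries can be consistently signed so that the resulting integer matrix is totally unimodular; the existence of such a signing follows from Tutte's excluded-minor characterization of regular matroids (equivalently, from his chain-group argument). Because a self-contained proof of this fact would require substantial matroid-theoretic machinery unrelated to the rest of the paper, we would simply invoke Tutte's theorem at this step and defer to the standard references.
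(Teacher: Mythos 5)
The paper does not prove this statement at all: it is imported verbatim as a citation to D'Al\`i, Juhnke-Kubitzke, and Koch \cite[Definition/Theorem~2.3]{DJKK}, and ``regular'' is in fact \emph{defined} earlier in the section to mean representable over every field, i.e.\ item~(i). So there is nothing in the paper to compare your argument against line by line; you are supplying a proof where the authors supply a reference.

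Your outline is essentially correct and is the standard route, so it is worth recording a few points of precision. The implications $(ii)\Rightarrow(iii)$ and $(ii)\Rightarrow(i)$ are sound as written (for the latter, the key observation that a subdeterminant lying in $\{0,\pm1\}$ over $\ZZ$ has the same vanishing/nonvanishing status after the ring map $\ZZ\to K$ is exactly right). In $(iii)\Rightarrow(ii)$ your cofactor argument is phrased in the wrong direction: expanding a $j\times j$ minor along the identity columns it \emph{contains} produces smaller minors, not a maximal one. The argument you want is the reverse: given a $j\times j$ submatrix of $[I_k\mid B]$ on rows $R$ and columns $C$, adjoin the identity columns $e_i$ for all $i\notin R$ to extend it to a $k\times k$ submatrix; the determinant of that maximal submatrix equals the original $j\times j$ minor up to sign (and the $j\times j$ minor is automatically zero if $C$ contains some $e_i$ with $i\notin R$). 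That, together with your correct remark that the row-reduction matrix $P$ has $\det P=\pm1$, gives the result. Finally, your treatment of $(i)\Rightarrow(ii)$ --- observing it is Tutte's theorem and that a self-contained proof would be out of scope --- is exactly how one should handle it here; this is in effect what the paper does for all four implications by delegating to \cite{DJKK}. So the net difference is that you fill in the three elementary linear-algebra directions rather than citing them, which makes the statement more self-contained at the cost of length, while still (correctly) outsourcing the genuinely deep step.
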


Note that a matrix satisfying the minor condition in (ii) is called \emph{totally unimodular} and a matrix satisfying the minor condition in (iii) is called \emph{weakly unimodular}.
Borrowing more terminology from linear algebra, an element of $\calI$ of cardinality $\rk(\calM)$ is called a \emph{basis} of $\calM$.
The set $\calB = \calB(\calM)$ of bases, together with the ground set $E$, can be used to define a matroid without explicit mention of independent sets; when this is done, we usually write $\calM = (E, \calB)$.

Now, from a matroid $\calM = (E,\calB)$ we may produce its \emph{dual} matroid $\calM^* = (E, \calB^*)$ by setting
\[
    \calB^* = \{E - B \subseteq E \mid B \in \calB\}.
\]
Many properties of $\calM^*$ can be easily deduced from $\calM$. 
For example, $\calM^*$ is representable if and only if $\calM$ is representable.
This does not hold for graphic matroids, though: if $\calM$ is graphic, then $\calM^*$ is graphic if and only if $\calM \cong M(G)$ for some planar graph $G$.

There are additional notions in graph theory that have corresponding matroidal analogues.
For example, a \emph{circuit} in a matroid is a minimal dependent set; this is the analogue of a cycle in a graph.
Letting $\calC = \calC(\calM)$ denote the set of circuits of a matroid, we define the matroid to be \emph{bipartite} if each of the circuits has even size. 
This further allows one to call a $\FF_2$-representable, or \emph{binary}, matroid \emph{Eulerian} if it is the dual of a bipartite matroid \cite[Proposition 9.4.1]{Oxley}.
These definitions reflect the fact that a connected, planar graph is Eulerian if and only if its dual is bipartite.

A \emph{loop} of $\calM$ is a single element of $E$ that forms a circuit; equivalently, a loop is an element that is in no basis of $\calM$. 
On the other hand, a \emph{coloop} of $\calM$, called by some authors an \emph{isthmus}, is a single element of $E$ that is in every basis of $\calM$.
Now, given a matroid $\calM = (E, \calI)$ and an element $e \in E$, the \emph{deletion} of $e$ from $\calM$ is the matroid $\calM - e$ having ground set $E - e$ and independent sets $\{ I \in \calI \mid e \notin I\}$. 
The \emph{contraction} of $\calM$ by $e \in E$, where $e$ is not a loop, is denoted $\calM / e$ and has ground set $E - e$ and independent sets $\{I - e \mid e \in I \in \calI\}$.
When $e$ is a loop, we define $\calM / e$ as $\calM - e$; this is because a loop is in no independent set, causing $\{I - e \mid e \in I \in \calI\}$ to be empty, violating the property $\emptyset \in \calI$.

Deletion and contraction are dual operations: $(\calM - e)^* = \calM^*/e$.
Many binary operations on graphs extend to matroids as well:
the \emph{direct sum}, or \emph{1-sum}, of matroids $\calM_1$ and $\calM_2$ having disjoint ground sets $E_1$ and $E_2$ is $\calM_1 \oplus \calM_2 = (E_1 \cup E_2, \calI_3)$, where $\calI_3$ consists of independent sets of the form $I = I_1 \cup I_2$ where $I_1 \in \calI_1$ and $I_2 \in \calI_2$.

To define the next operation we assume that $E_1 \cap E_2 = \{p\}$ and call this element the \emph{basepoint}.
The \emph{parallel connection}, denoted $P(\calM_1,\calM_2)$, has ground set $E_1 \cup E_2$ and is most efficiently defined through its circuits: if $p$ is neither a loop nor a coloop of $\calM_1$, then
\[
    \calC(P(\calM_1,\calM_2)) = \, \calC(\calM_1) \cup \calC(\calM_2) \cup \{(C_1 \cup C_2) - p \mid p \in C_i \in \calC(\calM_i) \text{ for each } i\}.
\]
If $p$ is a loop of $\calM_1$, then we set
\[
P(\calM_1, \calM_2) = \calM_1 \oplus (\calM_2 / p). 
\]
Similarly, if $p$ is a coloop of $\calM_1$, then we set
\[
P(\calM_1, \calM_2) = (\calM_1-p) \oplus \calM_2. 
\]
Of course, if $\calM_1$ and $\calM_2$ have disjoint ground sets, then a point of each ground set may be selected and relabeled $p$ to fall into this definition, although the resulting parallel connection may then depend on which points were selected.
An important property of this operation is that it preserves regularity: if $\calM_1$ and $\calM_2$ are both regular matroids, then so is $P(\calM_1,\calM_2)$ \cite[Corollary 7.1.25]{Oxley}.


\subsection{Triangulating Tree Sets and Toric Algebra}

It is frequently more tractable to gather the desired information about a polytope by decomposing it into subpolytopes.
A \emph{dissection}, or \emph{subdivision}, of a polytope $P$ is a collection $\calS$ of full-dimensional subpolytopes of $P$ whose union is $P$ and such that, for any $S_1, S_2 \in \calS$, the intersection $S_1 \cap S_2$ is a face of both. 
A \emph{triangulation} of $P$ is a dissection in which every $S \in \calS$ is a simplex.
There are many ways to create triangulations of a polytope; we will be concerned with one that has been recently introduced in the study of ordinary SEPs in particular.

Given an oriented subgraph $H$ of a graph $G$ on $[n]$, let $Q_H = \conv\{e_i - e_j \mid ij \mbox{ is a directed edge in } H\}$.
Thus, if $T$ is a spanning tree of $G$, then $Q_T$ is a simplex contained in $\Sigma(G)$.
In fact, $Q_T$ is a $(\dim(\Sigma(G))-1)$-dimensional simplex contained in the boundary of $\Sigma(G)$.
Further still, $Q_T$ is a \emph{unimodular} simplex: its vertices form an affine lattice basis of the affine span of $Q_T$.
Equivalently, a unimodular simplex has normalized volume $1$.
It follows that the simplex $\conv\{Q_T \cup \{0\}\}$ is a full-dimensional, unimodular simplex contained in $\Sigma(G)$.
It is desirable, then, to find a collection $\calT$ of oriented spanning trees of $G$ so that the set $\{Q_T \mid T \in \calT\}$ forms a triangulation of the boundary of $\SEP(G)$.
Such a set $\calT$ is called a \emph{triangulating tree set} for $G$.

Of course, if $\calT$ is a triangulating tree set for $G$, then the normalized volume of $\Sigma(G)$ is exactly $|\calT|$.
However, there is more information about $\Sigma(G)$ to be extracted from $\calT$.
To describe this information, consider a fixed vertex $v$ in an oriented tree $T$. 
For each edge $e \in T$ we say that \emph{$e$ points away from $v$ in $T$} if $v$ is contained in the same component as the tail of $e$ in $T - e$.
Otherwise, we say \emph{$e$ points towards $v$ in $T$}.

\begin{thm}[{\cite[Theorem 1.2]{KalmanTothmeresz}}]\label{thm: h-star contributions}
    Let $G$ be a connected graph, let $\calT$ be a triangulating tree set for $G$, and let $v$ be any vertex of $G$.
    Then the $i^{th}$ entry in $h^*(\Sigma(G))$ is
    \[
        |\{T \in \calT \mid T \text{ has exactly } i \text{ edges pointing away from } v\}|.
    \]
\end{thm}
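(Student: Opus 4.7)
Plan: I would upgrade the dissecting tree set $\calT$ into a regular unimodular triangulation of $\Sigma(G)$ and then compute its $h^*$-polynomial by exhibiting a shelling whose restriction faces are controlled by the edges of each tree pointing away from $v$.

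First, form the simplicial complex $\Delta$ whose maximal faces are $F_T = \conv(\{0\} \cup Q_T)$ for $T \in \calT$. Since $\calT$ is a dissecting tree set, the simplices $\{Q_T : T \in \calT\}$ triangulate $\partial \Sigma(G)$, and since the origin lies in the relative interior of the centrally symmetric polytope $\Sigma(G)$, coning each boundary simplex to $0$ yields a triangulation of all of $\Sigma(G)$. Each $F_T$ is a unimodular simplex because the vectors $\{e_i - e_j : ij \in T\}$ for a spanning tree $T$ form a $\ZZ$-basis of the root lattice $\{x \in \ZZ^n : \sum_i x_i = 0\}$. Granting that this triangulation is regular (which should be built into the definition of a dissecting tree set), Stanley's formula for lattice polytopes with regular unimodular triangulations yields $h^*(\Sigma(G); t) = h(\Delta; t)$. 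Because $0$ is a cone point of $\Delta$ in the sense that every facet contains it, $h(\Delta; t)$ coincides with the $h$-polynomial of the link of $0$, which is precisely the boundary triangulation $\partial \Delta = \{Q_T : T \in \calT\}$ of $\partial \Sigma(G)$.

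It then suffices to produce a shelling $Q_{T_1}, Q_{T_2}, \ldots, Q_{T_k}$ of $\partial \Delta$ whose restriction face $R(Q_{T_j})$ is the face of $Q_{T_j}$ spanned by the vectors $\{e_a - e_b : ab \in T_j \text{ points away from } v\}$. The standard shelling formula would then give
\[
h(\partial \Delta; t) = \sum_{j=1}^k t^{|R(Q_{T_j})|} = \sum_{i=0}^{n-1} |\{T \in \calT : T \text{ has exactly } i \text{ edges pointing away from } v\}| \cdot t^i,
\]
which is exactly the claim.

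The main obstacle is the construction of the shelling and the verification that its restriction faces really do have the claimed form. A natural candidate is to fix a total order on $V(G)$ beginning with $v$, extend it to a reverse-lex order on spanning trees, and show by a fundamental-cycle exchange argument that, for each $T_j \in \calT$, the facet of $Q_{T_j}$ obtained by deleting the vertex $e_a - e_b$ lies inside some earlier $Q_{T_i}$ precisely when $ab$ points away from $v$ in $T_j$. Producing the required earlier tree when $ab$ points away from $v$ (via swapping in an edge from the fundamental cycle of $T_j + e$ closer to $v$), and simultaneously ruling out any such earlier tree when $ab$ points toward $v$, is the delicate combinatorial core of the argument. As a striking byproduct, the resulting $h$-vector must be independent of the root $v$, a symmetry which can itself serve as a consistency check on whichever shelling is chosen.
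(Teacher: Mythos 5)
The paper does not prove this theorem; it cites it as \cite[Theorem 1.2]{KalmanTothmeresz}, so your attempt should be measured against Kálmán and Tóthmérész's actual argument. Your setup is fine up to a point: coning the boundary triangulation from the origin gives a unimodular triangulation of $\Sigma(G)$, and the identity $h^*(\Sigma(G);t) = h(\Delta;t)$ holds for \emph{any} unimodular lattice triangulation (this is Betke--McMullen/Stanley and does not require regularity, so the parenthetical ``regularity should be built into the definition'' is both unnecessary and, in fact, not part of the definition of a dissecting tree set). The cone-point reduction to the link of $0$ is also correct.

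The genuine gap is precisely the step you flag as ``the delicate combinatorial core'': you neither construct the shelling nor verify that its restriction faces are the sets of edges pointing away from $v$. This is not a routine detail. The theorem must hold for an \emph{arbitrary} dissecting tree set $\calT$, not only for one arising from a Gr\"obner degeneration, so you would need to prove that every such boundary triangulation admits a shelling order in which the restriction face of $Q_T$ is exactly $\{e_a-e_b : ab \in T \text{ points away from } v\}$, for every choice of root vertex $v$. It is not clear that such a shelling exists in general, and even for those triangulations known to be shellable (e.g.\ regular ones), matching the restriction faces to the ``away from $v$'' sets is a substantive claim that your reverse-lex candidate would need a careful fundamental-cycle argument to establish in both directions. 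Without this, the proof is a plan, not a proof.

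It is also worth noting that the actual route in \cite{KalmanTothmeresz} does not go through shellability at all. They partition the dilations of $\Sigma(G)$ using a half-open decomposition governed by the tree set, so that each half-open unimodular simplex contributes $t^{r_T}/(1-t)^{d+1}$ where $r_T$ is the number of facets removed, and then identify $r_T$ with the number of edges of $T$ pointing away from $v$ via a ribbon-structure/Bernardi-type activity argument. One advantage of that method is robustness: it applies even when $\{Q_T\}$ is merely a dissection rather than a face-to-face triangulation, a generality that a shelling-based argument cannot reach. So your approach, even if completed, would prove a narrower statement by a less portable method.
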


In order to detect when we have a triangulating tree set, we turn to Gr\"obner basis techniques.
Here, we will introduce the necessary algebraic background to prove our main results. 
We largely follow the framework given in \cite{HJM}.
For more details about monomial orders and their relationships to triangulations of polytopes, see \cite[Chapter 8]{sturmfels}.

Recall that, given a lattice polytope $P \subseteq \RR^n$ and a field $K$, we may construct a semigroup algebra 
\[
    K[P] = K[x^va^m \mid v \in mP \cap \ZZ^n,\, m \in \ZZ_{>0}] \subseteq K[x_1^{\pm 1}, \dots, x_n^{\pm 1},a].
\]
The \emph{toric ideal} of $P$ is defined as the kernel of the map
\[
    \pi_P: K[t_v \mid v \in P \cap \ZZ^n] \to K[P]
\]
defined by setting $\pi_P(t_v) = x^va$; this ideal is denoted $I_P$.

Before continuing, recall that a \emph{monomial order} on a polynomial ring $R = K[t_1,\dots,t_n]$ is a relation $\prec$ on the monomials $t^a$ of $R$ such that
\begin{itemize}
    \item $\prec$ is a total ordering on the monomials of $R$,
    \item if $t^a \prec t^b$, then $t^{a+c} \prec t^{b+c}$ for any $c \in \ZZ_{\geq 0}^n$, and
    \item $1 \prec t^a$ for every $a \in \ZZ_{\geq 0}^n$.
\end{itemize}
There are many monomial orders one may care about; the one that will be of most interest to us is the \emph{graded reverse lexicographic (grevlex)} order, defined as follows.

Begin by setting an order $t_{i_1} < t_{i_2} < \cdots < t_{i_n}$ on the variables of $R$.
For $a = (a_1,\dots,a_n) \in \ZZ_{\geq 0}^n$, set $\left|a\right| = \sum a_i$.
Grevlex declares $t^a \prec t^b$, where $a = (a_1,\dots,a_n)$ and $b = (b_1,\dots,b_n)$, if $\left|a\right| < \left|b\right|$ or if both $\left|a\right| = \left|b\right|$ and $b_{i_k} - a_{i_k} < 0$ where $k$ is the largest value for which $b_{i_k} - a_{i_k}$ is nonzero.

Returning to the setting of toric ideals, consider a monomial order $\prec$ on $K[t_v \mid v \in P \cap \ZZ^n]$ and any ideal $I$ of this algebra.
For each polynomial $f \in I$, its \emph{initial} term with respect to $\prec$, denoted $\init_{\prec}(f)$, is the term of $f$ that is greatest with respect to $\prec$.
The \emph{initial ideal} of $I$ with respect to $\prec$ is 
\[
	\init_{\prec}(I) = ( \init_{\prec}(f) \mid f \in I).
\]
A \emph{Gr\"obner basis} of $I$, which we denote by $\grob$, is a finite generating set of $I$ such that $\init_{\prec}(I) = (\init_{\prec}(g)\ |\ g \in \grob)$.
We call $\grob$ \emph{reduced} if each element has a leading coefficient of $1$ and if for any $g_1, g_2 \in \grob$, $\init_{\prec}(g_1)$ does not divide any term of $g_2$.
There can be many Gr\"obner bases of $I$ with respect to $\prec$, but there is exactly one reduced Gr\"obner basis of $I$ with respect to $\prec$.
We stress ``with respect to $\prec$'' since two different monomial orders may result in two distinct reduced Gr\"obner bases of $I$; uniqueness comes once a choice of $\prec$ has been made.

Now suppose $P \subseteq \RR^n$ is an $n$-dimensional lattice polytope and $P \cap \ZZ^n = \{l_1,\ldots,l_s\}$.
Choose a weight vector $w = (w_1,\ldots,w_s) \in \RR^s$ such that the polytope
\[ 
    P_w = \conv\{(l_1,w_1),\ldots,(l_s,w_s)\} \subseteq \RR^{n+1}
\]
is $(n+1)$-dimensional, i.e., $P_w$ does not lie in an affine hyperplane of $\RR^{n+1}$.
Some facets of $P_w$ will have outward-pointing normal vectors with a negative last coordinate.
By projecting these facets back to $\RR^n$, we obtain the facets of a polyhedral subdivision $\Delta_w(P)$ of $P$.
Any triangulation that can be obtained in this way for an appropriate choice of $w$ is called a \emph{regular} triangulation.

Importantly, regular unimodular triangulations are in one-to-one correspondence with reduced Gr\"obner bases whose initial terms are all squarefree \cite[Corollary 8.9]{sturmfels}.
The correspondence is as follows: if $S = \{l_{i_1},\dots,l_{i_r}\}$ is a set of lattice points in $P$, then $\conv(S)$ is a simplex of $\Delta_w(P)$ if and only if no initial term in the reduced Gr\"obner basis divides $\prod_{l\in S} t_l$. 
This comes from the fact that when the reduced Gr\"obner basis consists of binomials with squarefree initial terms, then $\init_{\prec}(I)$ is the Stanley-Reisner ideal of $\Delta_w(P)$.
For further details of this correspondence, see, e.g., \cite[Chapter 8]{sturmfels}.
Additionally, it is well-known that any monomial order -- hence, those obtained as a grevlex order -- can be represented by a weight vector \cite[Proposition 1.11]{sturmfels}.


In the case of $P = \Sigma(\calM)$ for a regular matroid $\calM = (E, \calI)$, we can identify $K[t_v \mid v \in P \cap \ZZ^n]$ with 
\begin{equation}\label{eq: semigp}
    K\left[\{z\} \cup \bigcup_{e \in E} \{x_e,y_e\} \right].
\end{equation}
Here, we let $x_e$ correspond to the column of a fixed matrix $M$ realizing $\calM$ that corresponds to $e$, and we will let $y_e$ correspond to the negative of this column.
The variable $z$, then, corresponds to the origin. 
So, for each $e \in E$, we can treat $x_e$ and $y_e$ as encoding an orientation of $e$.
For ease of notation, if we are considering $e \in E$ with a particular orientation, then we let $p_e$ denote the corresponding variable and $q_e$ the variable corresponding to the opposite orientation.
From \cite[Theorem 6.11]{DJKK}, we have the following immediately.

\begin{lem}\label{lem: initial terms}
    Let $\calM$ be a regular matroid on the ground set $E$, let $z < x_{e_1} < y_{e_1} < \cdots < x_{e_n} < y_{e_n}$ be an order on the lattice points of $\Sigma(\calM)$, and let $\prec$ be the grevlex order on \eqref{eq: semigp} induced from this ordering on the variables.
    The collection of all binomials of the following types forms a Gr\"obner basis of $I_{\Sigma(\calM)}$ with respect to the grevlex order:
    \begin{enumerate}[label={\rm (\roman*)}]
        \item For every $2k$-element circuit $C$ of $\calM$ and choice of orientation for each $e \in C$, and any $k$-element subset $I \subseteq C$ not containing the weakest element among those of $C$,
        \[
            \prod_{e \in I} p_e - \prod_{e \in C - I} q_e.
        \]
        \item For every $(2k+1)$-element circuit $C$ of $\calM$ and choice of orientation for each $e \in C$, and any $(k+1)$-element subset $I \subseteq C$,
        \[
            \prod_{e \in I} p_e - z\prod_{e \in C - I} q_e.
        \]
        \item For any $e \in E$, $x_ey_e - z^2$.
    \end{enumerate}
    In each case, the binomial is written so that the initial term has positive sign.
\end{lem}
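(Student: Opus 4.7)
The plan is to adapt the proof of Proposition 3.8 in [HJM], which handled the graphic case, to arbitrary regular matroids. The substantive replacement is the following consequence of total unimodularity: if $A$ is a totally unimodular representing matrix for $\calM$ with columns $u_e$, then every circuit $C \in \calC(\calM)$ carries a \emph{signed characteristic vector} $\sigma_C \colon C \to \{\pm 1\}$, unique up to global sign, with $\sum_{e \in C} \sigma_C(e)\, u_e = 0$. This plays the role that an oriented cycle plays in the graphic case; the ``choice of orientation'' of $C$ in the statement amounts to selecting $\sigma_C$ up to overall sign, with $p_e$ then denoting the variable attached to the column $\sigma_C(e)\, u_e$.

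First I would verify that each listed binomial lies in $I_{\Sigma(\calM)}$. For type (iii), $\pi_{\Sigma(\calM)}(x_e y_e) = z^2 = \pi_{\Sigma(\calM)}(z^2)$. For types (i) and (ii), $\prod_{e \in I} p_e$ maps to $x^{\sum_{e \in I} \sigma_C(e) u_e} z^{|I|}$ while $\prod_{e \in C - I} q_e$ maps to $x^{-\sum_{e \in C - I} \sigma_C(e) u_e} z^{|C - I|}$; the circuit relation forces the $x$-exponents to coincide, and the parity of $|C|$ matches the $z$-degrees directly in the even case and after inserting a factor of $z$ in the odd case.

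To see that the listed set is a Gr\"obner basis, I would apply Buchberger's criterion. S-pairs of type (iii) binomials with disjoint indices have coprime initial terms and reduce to zero. An S-pair between a type (iii) binomial and a circuit binomial sharing a variable $x_e$ or $y_e$ can be rewritten, via $x_e y_e \leftrightarrow z^2$, as a circuit binomial in which $e$ has been reoriented, and this new binomial again appears on the list. The only substantive case is an S-pair of two circuit binomials from circuits $C_1, C_2$ whose initial terms share a variable $p_e$. Here I would invoke the \emph{strong circuit elimination axiom}: there is a circuit $C_3 \subseteq (C_1 \cup C_2) - e$, and by total unimodularity $\sigma_{C_3}$ is a $\pm 1$-integer combination of $\sigma_{C_1}$ and $\sigma_{C_2}$. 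The parity of $|C_3|$ determines whether the reduced S-pair is a combination of type (i) or type (ii) binomials attached to $C_3$.

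The main obstacle is the bookkeeping in the final case: tracking the signs produced by $\sigma_{C_1}, \sigma_{C_2}, \sigma_{C_3}$, the chosen orientations on the various circuits, and the parities of $|C_1|, |C_2|, |C_3|$ carefully enough that the reduced S-pair is visibly a polynomial combination of listed binomials. A subordinate check — that under the grevlex order induced by $z < x_{e_1} < y_{e_1} < \cdots < x_{e_n} < y_{e_n}$ the positive term of each binomial is indeed the initial term, and that the exclusion of the weakest element of $C$ from $I$ in the even case is precisely the tiebreaker needed — reduces to direct inspection of the grevlex comparison rule and requires no matroid input.
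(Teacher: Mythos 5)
Your plan is a genuinely different route from the paper's, and the difference matters. The paper (and its model, \cite[Proposition 3.8]{HJM}) does not run Buchberger's criterion at all. It uses the standard toric-ideal criterion: a set $G$ of binomials inside a toric ideal $I$ is a Gr\"obner basis as soon as, for every binomial $m_1 - m_2 \in I$ with $m_1 \neq m_2$, at least one of $m_1$, $m_2$ is divisible by $\init_\prec(g)$ for some $g \in G$. (This is enough because it forces the $\init_\prec(G)$-standard monomials to be linearly independent mod $I$, and they contain the spanning set of $\init_\prec(I)$-standard monomials, so the two monomial ideals coincide.) Given such an $m_1 - m_2 \in I_{\Sigma(\calM)}$, one records the columns of $\pm A$ selected by $m_1$ and by $m_2$, observes that the combined multiset is linearly dependent and hence contains a circuit $C$, and then a short parity count on how $C$ splits between the $m_1$-side and the $m_2$-side produces the $k$- or $(k+1)$-element subset $I$ whose $p$-product divides $m_1$ (after first clearing $x_e y_e$ factors via type (iii)). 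The grevlex check you defer to the end is not a side issue here; it is the engine of that parity argument.

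Buchberger's criterion is valid in principle, but the part you flag as ``the main obstacle'' is not mere bookkeeping around a settled idea --- it is the unresolved content of the proof, and the sketch you give for it has a substantive flaw. When two circuit binomials share $p_e$ in their initial terms, $\sigma_{C_1}(e) = \sigma_{C_2}(e)$, so the relevant combination is $\sigma_{C_1} - \sigma_{C_2}$, which lies in $\ker A$ but may have entries of magnitude $2$. In a regular matroid, the conformal (Camion--Tutte) decomposition of such a vector is a nonnegative sum of \emph{several} conformal signed circuits, not a single one; strong circuit elimination hands you some circuit $C_3 \subseteq (C_1 \cup C_2) - e$, but its signed vector need not be a $\pm 1$-combination of $\sigma_{C_1}$ and $\sigma_{C_2}$, and a single reduction by the corresponding type (i) or (ii) binomial will generally not drive the S-polynomial to zero. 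You would have to exhibit and verify an entire reduction chain, track signs and parities at every stage, and argue termination. The divisibility criterion sidesteps all of this, which is precisely why the paper and \cite{HJM} use it; I would strongly recommend switching to that route.
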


\section{Extensions of known properties of SEPs}\label{sec: extensions}

Before providing the extensions of known properties of ordinary SEPs to generalized SEPs, 
we recall the formula of their dimensions. 
\begin{prop}[{cf. \cite[Theorem 4.3]{DJKK}}]\label{prop:dim}
    Let $\calM$ be a regular matroid on the ground set $E$. Then $\dim \Sigma(\calM)=\rk(\calM)$. 
    In particular, given a connected graph $G$ with $n$ vertices, we have $\dim \Sigma(G)=n-1$. 
    Moreover, we have $\dim \Sigma(\calM^*)=|E|-\rk (\calM)$. 
\end{prop}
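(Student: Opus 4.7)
The plan is to reduce everything to a straightforward linear-algebra computation, exploiting the fact that $\Sigma(\calM)$ is centrally symmetric and contains the origin.

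First I would fix a totally unimodular matrix $A$ representing $\calM$, say of size $k \times |E|$, and recall that $\Sigma(\calM) = \conv\{\pm u \mid u \text{ is a column of } A\} \subseteq \RR^k$. Since every column $u$ appears together with its negative $-u$, we have $0 = \tfrac{1}{2}(u + (-u)) \in \Sigma(\calM)$. Consequently, the affine hull of $\Sigma(\calM)$ coincides with the linear span of the columns of $A$, so
\[
\dim \Sigma(\calM) \;=\; \dim \mathrm{span}_{\RR}\{a_1,\ldots,a_{|E|}\} \;=\; \rk(A) \;=\; \rk(\calM),
\]
where the last equality uses Theorem~\ref{thm:defthm}(iii) (or simply that $\rk$ of a representable matroid equals the matrix rank). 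This proves the first assertion.

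Next, for the graph case, I would observe that the signed vertex-edge incidence matrix $A_G \in \RR^{n \times |E(G)|}$ of $G$, whose column corresponding to an edge $ij$ is $e_i - e_j$ (for some fixed orientation), is totally unimodular and represents $M(G)$. The associated $\Sigma(M(G))$ then matches the definition of $\Sigma(G)$ given in the introduction (the columns and their negatives together give all $\pm(e_i - e_j)$). Since $\rk(M(G))$ equals the number of edges in a spanning tree, and this is $n-1$ when $G$ is connected, the first part yields $\dim \Sigma(G) = n-1$. Finally, for the dual, the standard matroid identity $\rk(\calM^*) = |E| - \rk(\calM)$ together with the first assertion applied to $\calM^*$ (which is regular since duality preserves regularity) immediately gives $\dim \Sigma(\calM^*) = |E| - \rk(\calM)$.

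There is no real obstacle here; the only point requiring a moment of care is justifying that the affine hull of the columns of $A$ together with their negatives equals the linear hull, which is precisely what central symmetry around the origin provides. Everything else is a direct appeal to the definitions and to Theorem~\ref{thm:defthm}.
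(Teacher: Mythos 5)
Your argument is correct and complete. The paper itself does not give a proof of Proposition~\ref{prop:dim}; it simply records the statement with a pointer to \cite[Theorem~4.3]{DJKK}, so there is no in-paper proof to compare against. Your route — observing that $0 \in \Sigma(\calM)$ by central symmetry, hence $\aff(\Sigma(\calM))$ is the linear span of the columns of $A$, whose dimension is $\rk(A) = \rk(\calM)$ — is the standard linear-algebra argument one would expect DJKK to use, and the graph case and dual case then follow exactly as you say (the dual case needing only that regularity is closed under duality, so the first assertion may be applied to $\calM^*$, together with $\rk(\calM^*) = |E| - \rk(\calM)$). The only nitpick is that Theorem~\ref{thm:defthm}(iii) is about the \emph{existence} of a full-rank weakly unimodular representing matrix rather than directly asserting $\rk(\calM) = \rk(A)$; the fact you actually invoke is the one you state parenthetically, namely that the rank of a representable matroid equals the rank of any representing matrix, which the paper records earlier in Section~\ref{sec: background}.
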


Let $P \subseteq \RR^n$ be an $n$-dimensional lattice polytope containing the origin in its interior.
The \textit{(polar) dual polytope} of $P$ is
$P^\vee =\{y \in \RR^n  \mid  \langle x, y \rangle \leq 1 \ \text{for all}\  x \in P \},
$
where $\langle x, y \rangle$ is the usual inner product of $\RR^n$.
Then $P$ is called \textit{reflexive} if $P^\vee$
is also a lattice polytope.
It is known that $P$ is reflexive if and only if 
$h^*(P; t)$ is symmetric (see \cite{HibiReflexive, StanleyGorenstein}).
In particular, $\SEP(\calM)$ is reflexive for any regular matroid $\calM$ of positive rank \cite{DJKK}.

Let $P \subseteq \RR^n$ and $Q \subseteq \RR^m$ be lattice polytopes.
Then the \textit{free sum} (or \textit{direct sum}) $P \oplus Q \subseteq \RR^{m+n}$ is defined as
the convex hull of the set $(P \times 0_m) \cup (0_n \times Q)$ where $0_m \in \RR^m$ and $0_n \in \RR^n$
are the origins.

\begin{prop}[{\cite[Theorem 1]{Braunfreesum}}]
    Let $P$ be a reflexive polytope and let $Q$ be 
    a lattice polytope containing the origin in its interior. Then we have 
    \[
        h^*(P \oplus Q;t) =h^*(P; t) h^*(Q; t).
    \]
\end{prop}

Since $\SEP(\calM_1 \oplus \calM_2) = \SEP(\calM_1)\oplus \SEP(\calM_2)$ for regular matroids $\calM_1$ and $\calM_2$, 
we have the following.

\begin{cor}\label{free_sum}
Let $\calM_1 \oplus \calM_2$ be the direct sum of regular matroids $\calM_1$ and $\calM_2$.
Then we have 
\[h^*(\SEP(\calM_1 \oplus \calM_2);t) =h^*(\SEP(\calM_1); t) h^*(\SEP(\calM_2); t).\]
\end{cor}

First, we extend \cite[Proposition 5.4]{OhsugiTsuchiya21}, using the same overall strategy used in its proof.

\begin{prop}[{cf. \cite[Proposition 5.4]{OhsugiTsuchiya21}}]\label{prop: contraction}
    Let $\calM$ be a bipartite regular matroid on $E$.
    For any $e \in E$,
    \[
        h^*(\SEP(\calM);t) = (1+t)h^*(\SEP(\calM / e); t).
    \]
\end{prop}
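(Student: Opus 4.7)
My plan is to adapt the strategy of \cite[Proposition 4.4]{OhsugiTsuchiya21} from graphs to regular matroids by setting up a lattice projection $\Sigma(\calM) \to \Sigma(\calM/e)$ and analyzing it to derive the desired $h^*$-polynomial identity.

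First I would normalize the matroid representation. Starting from a totally unimodular matrix $A$ representing $\calM$, I would apply integer row operations to arrange that the column $a_e$ equals the standard basis vector $\mathbf{e}_1 \in \RR^r$, where $r = \rk(\calM)$. This is possible because $e$ is not a loop (bipartite matroids contain no loops, as a loop is a circuit of size $1$), and such operations preserve total unimodularity and $\Sigma(\calM)$ up to unimodular equivalence by \cite[Theorem~3.2]{DJKK}. The matrix representing $\calM/e$ is then the submatrix $A'$ obtained by deleting the first row and the column $a_e$. Letting $\pi : \RR^r \to \RR^{r-1}$ be the projection onto the last $r-1$ coordinates, the vertex descriptions immediately give $\pi(\Sigma(\calM)) = \Sigma(\calM/e)$.

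The target is the Ehrhart identity $E(\Sigma(\calM);t) = \frac{1+t}{1-t}\, E(\Sigma(\calM/e);t)$, which, after clearing denominators and using $\dim\Sigma(\calM) = \dim\Sigma(\calM/e) + 1$ (Proposition~\ref{prop:dim}), is equivalent to the claimed $h^*$-polynomial identity. Comparing coefficients reduces this further to the Ehrhart recurrence
\[
L_{\Sigma(\calM)}(m) - L_{\Sigma(\calM)}(m-1) = L_{\Sigma(\calM/e)}(m) + L_{\Sigma(\calM/e)}(m-1)
\]
for every $m \geq 1$. I plan to prove the recurrence via a fiber bijection: each lattice point $(p_1, q) \in m\Sigma(\calM) \cap \ZZ^r$ that is new at scale $m$ is matched bijectively with a lattice point of $m\Sigma(\calM/e) \cap \ZZ^{r-1}$ or of $(m-1)\Sigma(\calM/e) \cap \ZZ^{r-1}$, with the matching determined by the endpoints of the vertical lattice segment $\pi^{-1}(q) \cap m\Sigma(\calM)$.

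The main obstacle is the fiber analysis, and it is here that the bipartite hypothesis must do real work. Evenness of every circuit of $\calM$ imposes a parity restriction on the integer combinations of columns of $A$ producing a prescribed last $r-1$ coordinates $q$, which in turn constrains which first coordinates $p_1$ are attainable; the hope is that the window of admissible $p_1$ has exactly one more lattice point than the corresponding counts for $\Sigma(\calM/e)$ at scales $m$ and $m-1$ combined. If this direct bookkeeping proves too delicate, a secondary route opens through Lemma~\ref{lem: initial terms}: under bipartiteness only binomials of types (i) and (iii) arise in the Gr\"obner basis of $I_{\Sigma(\calM)}$, and the goal becomes showing that the Stanley--Reisner complex of the squarefree initial ideal is the suspension (join with $S^0$) of the corresponding complex for $\Sigma(\calM/e)$, with suspension vertices $x_e$ and $y_e$. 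Since join of simplicial complexes multiplies $h$-polynomials and the $h$-polynomial of $S^0$ is $1+t$, this would yield the identity directly.
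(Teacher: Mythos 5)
Your fallback Gr\"obner-basis route is, in substance, the paper's own argument, and it does work; the projection/fiber plan is a genuinely different tactic that you leave unfinished. For the Gr\"obner route to give a suspension you must place the contracted element $e$ as the \emph{weakest} ground-set element in the grevlex order of Lemma~\ref{lem: initial terms}, a choice your write-up should make explicit (it is not forced by normalizing the column $a_e$ to $\mathbf{e}_1$). With that ordering in place, bipartiteness eliminates all type-(ii) generators of $\grob$, and every type-(i) initial term $\prod_{e' \in I} p_{e'}$ omits the weakest element of its circuit, hence omits $e$ whenever $e$ lies in the circuit; the only generator of $\init_{\prec}(I_{\Sigma(\calM)})$ involving $x_e$ or $y_e$ is $x_e y_e$. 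Pairing this with the fact that each circuit of $\calM/e$ is of the form $C-e$ for a circuit $C$ of $\calM$ (even circuits missing $e$ yield type-(i) initial terms of $I_{\Sigma(\calM/e)}$, while $C - e$ for $e \in C$ yields type-(ii) initial terms, which are exactly the type-(i) initial terms of $I_{\Sigma(\calM)}$ for $C$ that avoid $e$), one checks $\init_{\prec}(I_{\Sigma(\calM)}) = (x_e y_e) + \init_{\prec}(I_{\Sigma(\calM/e)})$, i.e.\ the Stanley--Reisner complex is the join with $\{x_e, y_e\}$ you describe, and multiplicativity of $h$-polynomials under joins finishes the proof. The paper packages this same computation slightly differently, comparing $\init_{\prec}(I_{\Sigma(\calM)})$ to $\init_{\prec}(I_{\Sigma(\calM')})$ for the auxiliary matroid $\calM' = (\calM/e) \oplus \{\text{coloop}\}$ and then invoking multiplicativity of $h^*$ under direct sums via \cite{HibiTsuchiya}, but the content is the same as your join-with-$S^0$ argument. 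As for your primary route, the reduction to the Ehrhart recurrence $L_{\Sigma(\calM)}(m) - L_{\Sigma(\calM)}(m-1) = L_{\Sigma(\calM/e)}(m) + L_{\Sigma(\calM/e)}(m-1)$ is correct and is genuinely different from what the paper does, but the fiber analysis is the entire difficulty and as written it is a hope rather than a proof; since the Gr\"obner argument closes cleanly, I would develop that one instead.
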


\begin{proof}
First note that since $\calM$ is bipartite, $e$ is not a loop.
     Throughout this proof, we use the grevlex  order.
     Let $A$ be a full-rank, totally unimodular matrix of rank $r$ representing $\calM$, and denote its columns by $a_1,\dots,a_n$ so that $e = a_1$.
     Also, let $A_e$ be a matrix of rank $r-1$ representing $\calM/e$.
     Since $\calM$ is bipartite, the Gr\"obner basis $\grob$ of $I_{\SEP(\calM)}$ from Lemma~\ref{lem: initial terms} consists of the binomials of the form (i) and (iii).
     Let $A'$ be the rank-{$r$} matrix
     \[
    A' = \left[\begin{array}{@{}c|c@{}}
            \begin{matrix} 0 \\
            \vdots \\ 0 \end{matrix} & A_e\\
            \hline
            1 & \begin{matrix} 0 & \cdots & 0 \end{matrix} 
        \end{array}\right],
    \]
   and denote its columns by $a_1',\dots,a_n'$. Let $\calM'$ be the matroid represented by $A'$.
Then we have $\calM' = a_1'  \oplus \calM /e$ and 
the set of circuits in $\calM'$ coincides with that in $\calM/e$. 
From \cite[Proposition~3.1.10]{Oxley},
\begin{itemize}
    \item[(a)]
    the set of circuits in $\calM'$ consists of minimal nonempty members of 
$\{ C - e \mid C \in \calC(\calM)\}$.
\end{itemize}
Moreover, from \cite[Exercise 3.1.2]{Oxley}, given a circuit $C$ in $\calM$,
\begin{itemize}
    \item[(b)]
    if $e \notin C$, then $C$ is a union of circuits in $\calM'$; and
    \item[(c)]
    if $e \in C$, then $C - e$ is a circuit in $\calM'$. 
\end{itemize}
Let $\grob'$ be the set    
of all binomials $b$ satisfying one of the following:
     \[
         b = \prod_{a_i \in I} p_{a_i'} - \prod_{a_i \in C - I} q_{a_i'},
     \]
     where $C$ is a set of columns of $A$ corresponding to a circuit in $\calM$ of size $2k$ and $a_1 \notin C$, and $I$ is a $k$-subset of $C$ such that $a_j \notin I$ where $j = \min\{i \mid a_i \in C\}$; or
     \[
         b = \prod_{a_i \in I} p_{a_i'} - z\prod_{a_i \in C - I} q_{a_i'},
     \]
     where $C \cup a_1$ corresponds to a circuit in $\calM$ of size $2k+2$ and $I$ is a $(k+1)$-subset of $C$; or
     \[
         b = x_{a_i'}y_{a_i'} - z^2
     \]
     for $1 \leq i \leq n$.
     From (a), it is easy to see that the Gr\"obner basis of $I_{\Sigma(\calM')}$ from Lemma~\ref{lem: initial terms} is a subset of $\grob'$.
     If a circuit $C$ in $\calM$ is the union of circuits $C_1, \dots, C_r$ of $\calM'$, then 
     \[\begin{aligned}
      \pi_{\Sigma(\calM')} \left( \prod_{a_i \in I} p_{a_i'} - \prod_{a_i \in C - I} q_{a_i'} \right) &=
     \prod_{j=1}^r \pi_{\Sigma(\calM')} \left( \prod_{a_i \in I \cap C_j} p_{a_i'} \right) - 
    \prod_{j=1}^r \pi_{\Sigma(\calM')} \left( \prod_{a_i \in (C- I) \cap C_j} q_{a_i'} \right) \\
    &=
     \prod_{j=1}^r \pi_{\Sigma(\calM')} \left( \prod_{a_i \in I \cap C_j} p_{a_i'} - \prod_{a_i \in (C- I) \cap C_j} q_{a_i'} \right) \\
    &= 0.
     \end{aligned}\]
     Hence $ \prod_{a_i \in I} p_{a_i'} - \prod_{a_i \in C - I} q_{a_i'}$
     belongs to $I_{\Sigma(\calM')}$. From 
     (b) and (c), it follows that     
     $\grob'$ is a subset of $I_{\Sigma(\calM')}$.
     Hence $\grob'$ is a Gr\"obner basis of $I_{\Sigma(\calM')}$.
     Thus, the initial terms in the Gr\"obner bases $\grob$ for $I_{\Sigma(\calM)}$ and $\grob'$ for
     $I_{\Sigma(\calM')}$
     are the same if we identify $x_{a_i}$ with $x_{a_i'}$, and $y_{a_i}$ with $y_{a_i'}$.
     Hence there exist regular unimodular triangulations $\Delta$ and $\Delta'$ of $\SEP(\calM)$ and $\SEP(\calM')$, respectively, having the same combinatorial $h$-polynomials $h(\Delta;t)=h(\Delta';t)$.     
     Hence $h^*(\Sigma(\calM); t) =
     h(\Delta;t)=h(\Delta';t)= h^*(\Sigma(\calM'); t)$.
     Thus
     \[
        h^*(\Sigma(\calM); t) = h^*(\Sigma(\calM'); t) = h^*(\Sigma( a_1'
        ); t)h^*(\Sigma(\calM/e); t) = (1+t)h^*(\Sigma(\calM/e); t),
     \]
     as claimed.
\end{proof}



Next we present an extension of \cite[Theorem 44]{DDM}.
Its proof is entirely analogous to the proof of \cite[Theorem 44]{DDM}, in the same way that our proof of Proposition~\ref{prop: contraction} adapts the proof of \cite[Proposition 5.4]{OhsugiTsuchiya21}.

\begin{thm}[{cf. \cite[Theorem 44]{DDM}}]
    Let $\calM_1$ be a bipartite regular matroid and let $\calM_2$ be a regular matroid.
    Then
    \[
        h^*(\SEP(P(\calM_1,\calM_2));t) = \frac{h^*(\SEP(\calM_1);t)h^*(\SEP(\calM_2);t)}{1 + t}.
    \]
\end{thm}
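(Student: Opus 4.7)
The plan is to mirror \cite[Theorem~44]{DDM}, using the same Gr\"obner-basis recipe that this paper has already used to extend \cite[Proposition~3.8]{HJM} to Lemma~\ref{lem: initial terms} and to extend \cite[Proposition~4.4]{OhsugiTsuchiya21} to Proposition~\ref{prop: contraction}. First I would fix totally unimodular matrices $A_1$ and $A_2$ realizing $\calM_1$ and $\calM_2$, arranged so that the basepoint $p$ appears as the first column of each. Gluing these along the $p$-column yields a totally unimodular realization of $P(\calM_1,\calM_2)$, regularity being preserved by \cite[Corollary~7.1.25]{Oxley}, so Lemma~\ref{lem: initial terms} applies uniformly to all three matroids once one fixes compatible grevlex orders with $z < x_p < y_p < \cdots$ on shared variables. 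Proposition~\ref{prop:dim} together with the well-known identity $\rk(P(\calM_1,\calM_2)) = \rk(\calM_1)+\rk(\calM_2)-1$ supplies the Ehrhart-denominator bookkeeping: the lone factor $1/(1-t)$ on the right-hand side corresponds to having one fewer $(1-t)$ in the Ehrhart denominator of the parallel connection than in the product of the Ehrhart denominators of the two factors.

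The combinatorial heart of the argument is the circuit identity
\[
\calC(P(\calM_1,\calM_2)) = \calC(\calM_1) \cup \calC(\calM_2) \cup \{(C_1 \cup C_2) - p \mid p \in C_i \in \calC(\calM_i)\}.
\]
The bipartite hypothesis on the second matroid is exactly what makes this identity interact well with Lemma~\ref{lem: initial terms}: every $C_2 \in \calC(\calM_2)$, and in particular every circuit of $\calM_2$ through $p$, has even size, so a ``cross circuit'' $(C_1 \cup C_2) - p$ has the same parity as $C_1$. I would then show that the Gr\"obner-basis generators of $I_{\Sigma(P(\calM_1,\calM_2))}$ produced by cross circuits under Lemma~\ref{lem: initial terms} factor as products of initial terms from one generator of $I_{\Sigma(\calM_1)}$ and one generator of $I_{\Sigma(\calM_2)}$ (types (i) and (ii) are treated separately, and type (iii) is the same in all three ideals). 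After this rewriting $\init_{\prec}(I_{\Sigma(P(\calM_1,\calM_2))})$ decomposes along the shared basepoint variables $\{x_p,y_p,z\}$ as a ``tensor-like'' amalgamation of $\init_{\prec}(I_{\Sigma(\calM_1)})$ and $\init_{\prec}(I_{\Sigma(\calM_2)})$.

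Passing to Hilbert series via the standard correspondence between $h^*(\Sigma(\calM);t)/(1-t)^{\rk(\calM)+1}$ and the Hilbert series of the Stanley--Reisner-type quotient by a squarefree initial ideal then converts this decomposition into the desired multiplicative $h^*$-identity, with a single copy of $1-t$ surviving in the denominator because the two ``pieces'' of the decomposition share the basepoint variable pair. The main obstacle is the bookkeeping in the previous paragraph: one must describe exactly how cross-circuit initial terms are absorbed into the product and verify that no shared monomial in $x_p,y_p,z$ is over- or under-counted. The bipartite hypothesis on $\calM_2$ is precisely what prevents an odd cross circuit from producing an unmatched $z$ factor (type (ii) generator) on one side of the product but not the other, so it is both a natural and essential assumption at exactly this step.
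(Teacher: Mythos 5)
The paper does not actually carry out this proof: it states that the argument is ``entirely analogous to the proof of \cite[Theorem 44]{DDM}'' and omits the details. Your outline captures the same strategy the paper points to (set up compatible grevlex orders with $z < x_p < y_p < \cdots$, apply Lemma~\ref{lem: initial terms}, exploit the circuit description of $P(\calM_1,\calM_2)$ and the parity constraint coming from bipartiteness of $\calM_2$, then pass to Hilbert series). You also correctly identify why the hypothesis is on $\calM_2$ only and where the single $1/(1-t)$ comes from in the Ehrhart bookkeeping, so at the level of ``which proof is being adapted'' this is the right plan.

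One concrete inaccuracy worth flagging is your claim that the initial terms coming from cross circuits ``factor as products of initial terms from one generator of $I_{\Sigma(\calM_1)}$ and one generator of $I_{\Sigma(\calM_2)}$.'' The degrees do not line up for such a factorization: a cross circuit $(C_1 \cup C_2)-p$ with $|C_1|=2k$, $|C_2|=2m$ has size $2(k+m-1)$, so its type-(i) initial terms have degree $k+m-1$, whereas a product of a degree-$k$ initial term from $C_1$ and a degree-$m$ one from $C_2$ has degree $k+m$. The mechanism that actually makes the initial ideal decompose is that cross-circuit initial terms are \emph{redundant}: with $x_p,y_p$ chosen weakest, any $(k+m-1)$-subset $I \subseteq (C_1\cup C_2)-p$ satisfies $|I\cap(C_1-p)| \ge k$ or $|I\cap(C_2-p)| \ge m$, so $\prod_{e\in I}p_e$ is already divisible by a single initial term coming from $C_1$ alone or from $C_2$ alone. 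This is what makes $\init_\prec(I_{\Sigma(P(\calM_1,\calM_2))})$ into the expected amalgamation along $\{z,x_p,y_p\}$, and it is the step where the precise bookkeeping you defer actually lives. Since you explicitly identify that bookkeeping as the main obstacle, the proposal is a sound skeleton, but the ``factor as products'' phrasing should be replaced by this divisibility/redundancy argument before the Hilbert-series step can be made rigorous.
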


\begin{proof}
Let $e_0$ be the basepoint and let $\calM = P(\calM_1,\calM_2)$.
Since $\calM_1$ is bipartite, $e_0$ is not a loop of $\calM_1$.

\bigskip

\noindent
{\bf Case 1.}
($e_0$ is a coloop of $\calM_1$)
A coloop in $\calM_1$ is a loop in $\calM_1^*$, and therefore $\calM_1 - e_0 = \calM_1 / e_0$. 
Thus
$\calM = (\calM_1-e_0) \oplus \calM_2=
(\calM_1/e_0) \oplus \calM_2$.
From Corollary \ref{free_sum} and Proposition \ref{prop: contraction},
we have 
\[
        h^*(\SEP(\calM);t) =
        h^*(\SEP(\calM_1/e_0);t)h^*(\SEP(\calM_2);t)
        =\frac{h^*(\SEP(\calM_1);t)}{1+t}h^*(\SEP(\calM_2);t).
      \]

\noindent
{\bf Case 2.}
($e_0$ is not a coloop of $\calM_1$)
By definition,
\[
    \calC(\calM) = \, \calC(\calM_1) \cup \calC(\calM_2) \cup \{(C_1 \cup C_2) - e_0 \mid e_0 \in C_i \in \calC(\calM_i) \text{ for each } i\}.
\]
From Lemma~\ref{lem: initial terms},
    the collection of all binomials of the following types forms a Gr\"obner basis of $I_{\Sigma(\calM)}$ with respect to the grevlex order induced from $z < x_{e_0} < y_{e_0} < \cdots$.
    \begin{enumerate}[label={\rm (\roman*)}]
        \item 
        For every $2k$-element circuit $C$ of $\calM_1$ and choice of orientation for each $e \in C$, and any $k$-element subset $I \subseteq C$ not containing the weakest element among those of $C$,
        \[
            \prod_{e \in I} p_e - \prod_{e \in C - I} q_e.
        \]
        \item 
        For every $2k$-element circuit $C$ of $\calM_2$ and choice of orientation for each $e \in C$, and any $k$-element subset $I \subseteq C$ not containing the weakest element among those of $C$,
        \[
            \prod_{e \in I} p_e - \prod_{e \in C - I} q_e.
        \]
        \item 
        For every $(2k+1)$-element circuit $C$ of $\calM_2$ and choice of orientation for each $e \in C$, and any $(k+1)$-element subset $I \subseteq C$,
        \[
            \prod_{e \in I} p_e - z\prod_{e \in C - I} q_e.
        \]
        \item 
        For any $e \in E$, $x_ey_e - z^2$.
        
        \item         
        For every $2k$-element circuit $C=(C_1 \cup C_2)-e_0$ of $\calM$ where 
        $e_0 \in C_i \in \calC(\calM_i)$ for each $i$
        and choice of orientation for each $e \in C$, and any $k$-element subset $I \subseteq C$ not containing the weakest element among those of $C$,
        \[
            \prod_{e \in I} p_e - \prod_{e \in C - I} q_e.
        \]
        In this case, since $|C_1|$ is even, so is $|C_2|$.

        \item
        For every $(2k+1)$-element circuit $C=(C_1 \cup C_2)-e_0$ of $\calM$ where 
        $e_0 \in C_i \in \calC(\calM_i)$ for each $i$
        and choice of orientation for each $e \in C$, and any $(k+1)$-element subset $I \subseteq C$,
        \[
            \prod_{e \in I} p_e - z\prod_{e \in C - I} q_e.
        \]
        In this case, since $|C_1|$ is even, $|C_2|$ is odd.
    \end{enumerate}
    
Let 
\[
b=\prod_{e \in I} p_e - \prod_{e \in C - I} q_e
\]
be a binomial of the form in (v).
Since $|I| = |C-I| =k$, either 
 $|(C_1-e_0) \cap I| \ge |(C_1-e_0) - I|$
 or
 $|(C_2-e_0) \cap I| \ge |(C_2-e_0) - I|$ holds.
Suppose that $|(C_i-e_0) \cap I| \ge |(C_i-e_0) - I|$
for $1 \le i \le 2$.
Since $|C_i-e_0|$ is odd, 
 $|(C_i-e_0) \cap I| > |(C_i-e_0) - I|$,
 and hence $|C_i \cap I| \ge | C_i - I|$.
Let 
 \[b'= \prod_{e \in I'} p_e - \prod_{e \in C_i - I'} q_e,\]
 where $I' \subseteq I$ and $|C_i \cap I'| = | C_i - I'|$.
 Then $b'$
 is a binomial of the form in (i) or (ii), and the initial term of $b$ is divisible by the initial term of $b'$.

Let 
\[
b=\prod_{e \in I} p_e - z \prod_{e \in C - I} q_e
\]
be a binomial of the form in (vi).
Since $|I| = k+1$ and $|C-I| =k$, either 
 $|(C_1-e_0) \cap I| > |(C_1-e_0) - I|$
 or
 $|(C_2-e_0) \cap I| > |(C_2-e_0) - I|$ holds.
Then 
either 
 $|C_1 \cap I| \ge |C_1 - I|$
 or $|C_2 \cap I| \ge |C_2 - I|$
 holds.
If $|C_1 \cap I| \ge |C_1 - I|$,
then,
by the same argument above,
we can find a binomial $b'$ in (i) such that
the initial term of $b$ is divisible by the initial term of $b'$.
Suppose that $|C_2 \cap I| \ge |C_2 - I|$.
Since $|C_2|$ is odd, we have $|C_2 \cap I| > |C_2 - I|$.
Let 
 \[b'= \prod_{e \in I'} p_e - z \prod_{e \in C_2 - I'} q_e,\]
 where $I' \subseteq I$ and $|C_2 \cap I'| = | C_2 - I'|+1$.
 Then $b'$
 is a binomial of the form in (iii),
 and the initial term of $b$ is divisible by the initial term of $b'$.

Thus the binomials in (v) and (vi) are redundant, and the collection of all binomials in (i), (ii), (iii), and (iv)
above forms a Gr\"obner basis of $I_{\Sigma(\calM)}$ with respect to the grevlex order.
Hence the initial ideal of $I_{\Sigma(\calM)}$ is generated by the initial terms of binomials in (i), (ii), (iii), and (iv).
Moreover,
\begin{itemize}
    \item[(1)]
    the variables $x_{e_0}$, $y_{e_0}$, and $z$ do not appear in the initial terms of  binomials in (i) arising from $\calM_1$, and
    \item[(2)]
    there are no common variables in the initial terms of the binomials in (i) arising from $\calM_1$ and that in (ii) and (iii) arising from $\calM_2$.
\end{itemize}
%
Hence, from (1), (2) and \cite[Lemma 43 and Proof of Theorem 44]{DDM}, we have 
     \[
\frac{h^*(\SEP(\calM);t)}{(1-t)^{{\rm rank}(\calM)+1}}
    = \frac{h^*(\SEP(\calM_1);t)}{(1-t)^{{\rm rank}(\calM_1)+1}} \frac{h^*(\SEP(\calM_2);t)}{(1-t)^{{\rm rank}(\calM_2)+1}}\frac{(1 - t)^2}{1 + t}
   = \frac{h^*(\SEP(\calM_1);t)h^*(\SEP(\calM_2);t)}{(1-t)^{{\rm rank}(\calM)+1} (1 + t)}.
    \]
     Multiplying both sides by $(1 - t)^{{\rm rank}(\calM)+1}$ yields the claim.
\end{proof}

\section{A counterexample of $\gamma$-nonnegativity for generalized SEPs}\label{sec: counterexample}

In \cite{OhsugiTsuchiya21}, the authors conjectured that SEPs are $\gamma$-nonnegative. 
It is natural to want to extend this conjecture to generalized SEPs, but the conjecture in this setting is false.
Throughout the remainder of this section, we will be considering the matroids $M^*(K_{3,n})$ for $n \geq 3$ where $K_{3,n}$ is a complete bipartite graph.

\begin{prop}\label{prop:gamma}
Let $\calM=M^*(K_{3,n})$ with $n \geq 3$ and let $E$ be the ground set of $\calM$. 
\begin{enumerate}[label={\rm (\roman*)}]
    \item $\calM$ can be represented by $[I_{2(n-1)} \mid B_n]$, where $B_n$ is the $2(n-1) \times (n+2)$ matrix \[
    B_n = 
    \begin{bmatrix}
        1 & 1 & 0 & -1 & 0 & 0 & \cdots & 0 \\
        1 & 0 & 1 & -1 & 0 & 0 & \cdots & 0 \\
        1 & 1 & 0 & 0 & -1 & 0 & \cdots & 0 \\
        1 & 0 & 1 & 0 & -1 & 0 & \cdots & 0 \\
        1 & 1 & 0 & 0 & 0 & -1 & \cdots & 0 \\
        1 & 0 & 1 & 0 & 0 & -1 & \cdots & 0 \\
        \vdots & \vdots & \vdots & \vdots & \vdots & \vdots & \ddots & \vdots \\
        1 & 1 & 0 & 0 & 0 & 0 & \cdots & -1 \\
        1 & 0 & 1 & 0 & 0 & 0 & \cdots & -1 \\
    \end{bmatrix}. 
\]
    \item $\calM - e$ is not graphic for any $e \in E$ if $n \geq 4$. 
\end{enumerate}
\end{prop}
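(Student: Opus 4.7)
My plan for part (i) rests on the standard duality for totally unimodular representations: if $A = [I_k \mid N]$ is a totally unimodular matrix representing a matroid $\mathcal{N}$ of rank $k$ on $m$ elements, then $[-N^T \mid I_{m-k}]$ is a totally unimodular matrix representing $\mathcal{N}^*$. I will build such a representation $[I_{n+2} \mid N]$ of $M(K_{3,n})$ from the signed incidence matrix. After orienting every edge of $K_{3,n}$ from the $3$-side $\{u_1,u_2,u_3\}$ to the $n$-side $\{v_1,\ldots,v_n\}$ and deleting the row indexed by $u_1$, I choose the spanning tree $T$ with edge set $\{u_1 v_j : 1 \leq j \leq n\} \cup \{u_2 v_1, u_3 v_1\}$ to supply the basis columns. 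Elementary row operations reduce the tree-edge block to $I_{n+2}$, and each of the remaining $2(n-1)$ columns (indexed by the non-tree edges $u_i v_j$ with $i \in \{2,3\}$ and $j \geq 2$) is then determined by the fundamental circuit of $u_i v_j$ in $T$, namely the $4$-cycle $u_i v_j u_1 v_1 u_i$. This yields $N$ explicitly.

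Next, forming $-N^T$ gives a $2(n-1) \times (n+2)$ totally unimodular representation of $M^*(K_{3,n})$. Permuting the columns of $-N^T$ and negating those corresponding to the tree edges $u_2 v_1$, $u_3 v_1$, and $u_1 v_j$ for $j \geq 2$ --- operations that preserve both the underlying matroid and total unimodularity --- produces exactly $B_n$: the all-ones column comes from $u_1 v_1$, the two alternating $\{0,1\}$-columns come from $u_2 v_1$ and $u_3 v_1$, and the $n-1$ columns each carrying a $(-1,-1)$ pair come from $u_1 v_j$ for $j \geq 2$. The only real obstacle here is bookkeeping: tracking signs and column orderings carefully so that the final matrix agrees with the displayed $B_n$.

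For part (ii), I use the duality identity
\[
    \calM - e \;=\; (M(K_{3,n})/e)^* \;=\; M(K_{3,n}/e)^*,
\]
the first equality being the standard fact that matroid deletion is dual to contraction and the second being the commutativity of matroid contraction with graph contraction. By Whitney's planarity theorem, for a connected graph $H$ the matroid $M(H)^*$ is graphic if and only if $H$ is planar. Hence the claim reduces to showing that $K_{3,n}/e$ is non-planar for every edge $e$ when $n \geq 4$. Writing $e = u_i v_j$ and letting $w$ denote the merged vertex in $K_{3,n}/e$, choose $\{i', i''\} = \{1,2,3\} \setminus \{i\}$ and any three indices $k_1, k_2, k_3 \in \{1,\ldots,n\} \setminus \{j\}$ (which exist because $n - 1 \geq 3$). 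Then $\{w, u_{i'}, u_{i''}\}$ and $\{v_{k_1}, v_{k_2}, v_{k_3}\}$ form the two sides of a $K_{3,3}$ subgraph in $K_{3,n}/e$: each $u_{i'} v_{k_r}$ and $u_{i''} v_{k_r}$ is an original edge, while each $w v_{k_r}$ is the image of the contracted edge $u_i v_{k_r}$. Therefore $K_{3,n}/e$ is non-planar for $n \geq 4$, and the claim follows.
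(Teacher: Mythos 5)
Your proof is correct and follows essentially the same route as the paper: for (i), the same dual representation $[I\mid D]\mapsto[I\mid -D^T]$ applied to the same spanning tree of $K_{3,n}$ and its fundamental circuits, and for (ii), the same reduction via $\calM^*-e=(\calM/e)^*$ together with Whitney's criterion, reducing the claim to non-planarity of $K_{3,n}/e$ witnessed by a $K_{3,3}$ subgraph. The only cosmetic difference is that the paper invokes edge-transitivity to treat only $e=u_1v_1$ while you handle a general $e$ directly.
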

\begin{proof}
    \hfill
    \begin{enumerate}[label=(\roman*)]
        \item  It is known that if a matroid $\calM$ is represented by a matrix $A = [I \mid D]$, then $\calM^*$ is represented by $[-D^T \mid I]$, where $\cdot^T$ denotes the transpose (\cite[Theorem 2.2.8]{Oxley}). 
        When permuting columns of $[-D^T \mid I]$, the resulting matrices represent matroids isomorphic to $\calM^*$, and therefore may also be taken as matrices representing $\calM^*$. 
        Hence, we prove that the matrix $[I_{n+2} \mid -D^T]$ represents a matroid isomorphic to $M(K_{3,n})$ where $D= B_n$.
        
        Let $V(K_{3,n})=\{u_1,u_2,u_3\} \cup \{v_1,\ldots,v_n\}$ and $E(K_{3,n})=\{u_iv_j \mid 1 \leq i \leq 3, 1 \leq j \leq n\}$. 
    Fix a spanning tree consisting of the following edges: 
    \[
        e_1=u_1v_1, \quad e_2=v_1u_2,\quad e_3=v_1u_3,\quad e_4=u_1v_2,\,\ldots\,,e_{n+2}=u_1v_n.
    \]
    This corresponds to the identity matrix in the representing matrix $[I \mid D']$ of $M(K_{3,n})$. 
    Our goal is to show that $D'=-D^T =-B_n^T$. 
    By assigning the orientation of each of these edges as described (i.e., from the first vertex to the second vertex), 
    for the remaining edges of $K_{3,n}$, we see that $u_2v_j$ (resp. $u_3v_j$) with $2 \leq j \leq n$ forms minimal dependent sets with the opposite of $e_2$ (resp.~the opposite of $e_3$), the opposite of $e_1$, and $e_{j+2}$. 
    This dependent set corresponds to the $(2j-1)$-th (resp. $2j$-th) column of $-D^T$, as claimed.
    
    \item It is known that given a matroid $\calM$ and an element $e$ of its ground set, we have $\calM^*- e = (\calM/e)^*$ (see \cite[3.1.1]{Oxley}). 
    Hence, it is enough to show that $K_{3,n}/e$ never becomes planar for any edge of $K_{3,n}$.
    This is easy to see: the edge set of $K_{3,n}/u_1v_1$ is $\{u_iv_j \mid i=2,3, \, 1 \leq j \leq n\} \cup \{v_1v_k \mid 2 \leq k \leq n\}$, which contains $K_{3,n-1}$ as a subgraph. \qedhere
    \end{enumerate}
\end{proof}



A helpful consequence of Corollary~\ref{free_sum} on the side of $\gamma$-vectors is the following: writing $\gamma(P) = (\gamma_0,\dots,\gamma_{\lfloor d/2 \rfloor})$, we have
\[
    h^*(P \oplus [-1,1]; t) = h^*(P;t)(1+t) = \sum_{i=0}^{\lfloor d/2\rfloor} \gamma_it^i(1+t)^{d+1-2i}.
\]
That is, $\gamma(P \oplus [-1, 1]) = \gamma(P)$.
This will be a crucial observation in our next result.

\begin{thm}\label{thm: not nonnegative}
    In every dimension at least $10$ there is a generalized SEP that is not $\gamma$-nonnegative.
\end{thm}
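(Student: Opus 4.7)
The plan is to use the explicit formula for $\gamma(\Sigma(M^*(K_{3,n})); t)$ provided by Theorem~\ref{thm: gamma vec} to produce a generalized SEP with a negative $\gamma$-entry in every even dimension at least $10$, and then to lift the result to odd dimensions via a matroid direct-sum construction.

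For the even case, Proposition~\ref{prop:dim} gives $\dim \Sigma(M^*(K_{3,n})) = |E(K_{3,n})| - \rk(M(K_{3,n})) = 3n - (n+2) = 2(n-1)$, so the family $\{\Sigma(M^*(K_{3,n}))\}_{n \geq 6}$ realizes every even dimension at least $10$. Reading off the closed form of Theorem~\ref{thm: gamma vec}, I would identify an index $i$ at which the expression for $\gamma_i$ is an alternating sum with a dominant negative leading contribution in $n$, and then verify $\gamma_i < 0$ for every $n \geq 6$. This verification is the main obstacle: one must select the correct index and bound the positive summands above by the magnitude of the leading negative term, perhaps after confirming a handful of base cases (say $n \in \{6, 7, 8\}$) by direct computation before extending by a degree count in $n$ or by induction.

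For the odd case, let $\calM_0$ be the rank-$1$ regular matroid on a single non-loop element, so that $\Sigma(\calM_0) = [-1,1]$ and $h^*(\Sigma(\calM_0); t) = 1 + t$. Set $\calM_n := M^*(K_{3,n}) \oplus \calM_0$. Since direct sums of regular matroids are regular, each $\calM_n$ is regular; the associated polytope is unimodularly equivalent to the free sum $\Sigma(M^*(K_{3,n})) \oplus [-1,1]$ and has dimension $2(n-1) + 1$. Both summands are reflexive---they contain $0$ in their interior and have symmetric $h^*$-polynomials---so the standard product formula for $h^*$-polynomials under free sums of reflexive lattice polytopes gives
\[
    h^*(\Sigma(\calM_n); t) = (1+t)\, h^*(\Sigma(M^*(K_{3,n})); t).
\]
A short manipulation of the $\gamma$-expansion shows that, when $d$ is even, multiplying a symmetric polynomial of degree $d$ by $(1+t)$ produces a symmetric polynomial of degree $d+1$ with exactly the same $\gamma$-coefficients (padded by a trailing zero). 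Hence $\gamma(\Sigma(\calM_n)) = \gamma(\Sigma(M^*(K_{3,n})))$, and the negative entry from the even case persists. Taken together, the two constructions exhibit a non-$\gamma$-nonnegative generalized SEP in every dimension at least $10$.
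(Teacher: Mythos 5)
There is a genuine gap in your even-dimension step. Theorem~\ref{thm: gamma vec} does not give the $\gamma$-polynomial of $\Sigma(M^*(K_{3,n}))$; it gives the $\gamma$-polynomial of $\Sigma(\Gamma(n+1))$, where $\Gamma(n+1)$ is the \emph{graph} obtained after deleting two carefully chosen elements from $M^*(K_{3,n+1})$. That formula is a sum of products of nonnegative terms ($\binom{n}{m}$, $(2t)^{n-m}$, $\binom{2a}{a}t^a$) with no cancellation, and indeed the theorem concludes that $\Sigma(\Gamma(n+1))$ is $\gamma$-nonnegative for all $n$ --- that is its whole point, since it supplies evidence for the Ohsugi--Tsuchiya conjecture in the ordinary (graphic) case. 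So the alternating sum with a dominant negative term that you propose to locate does not exist in that expression, and your verification step has nothing to act on. The paper gives no closed form for $\gamma(\Sigma(M^*(K_{3,n})))$ at all; it explicitly remarks that nonnegativity appears to fail for all even $n\geq 6$ based on computation but ``we do not aim to prove so here.''

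What the paper actually does is narrower and simpler: it computes a single explicit $h^*$-vector for $n=6$ (dimension $10$), observes the resulting $\gamma$-vector $(1,16,124,596,914,-148)$ has a negative entry, and then obtains every dimension $10+k$ for $k\geq 1$ by taking direct sums $\mathcal{M}\oplus x_1\oplus\cdots\oplus x_k$ with single non-loop elements, which multiplies $h^*$ by $(1+t)^k$ and leaves $\gamma$ unchanged. Your odd-dimension construction is the $k=1$ instance of this and is sound. To repair your argument, simply apply that same lift for \emph{all} $k\geq 0$ starting from the one verified case $n=6$, rather than trying to control an entire family $\{M^*(K_{3,n})\}_{n\geq 6}$ for which no usable formula is on hand.
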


\begin{proof}
    Consider the matroid $\calM = M^*(K_{3,6})$.
    Since $K_{3,6}$ has $18$ edges, we know that $\dim \Sigma(M^*(K_{3,6}))=18-(9-1)=10$. 
    From its matrix description, it is possible to show that 
    \[
        h^*(\Sigma(\calM)) = (1, 26, 297, 1908, 6264, 9108, 6264, 1908, 297, 26, 1),
    \]
    hence $\gamma(\Sigma(\calM)) = (1, 16, 124, 596, 914, -148)$.
    We computed the $h^*$-vector for this particular example through a combination of the software Macaulay2 \cite{M2} and polymake \cite{polymake}. 
    
    Now let $\calM_k = \calM \oplus x_1 \oplus \cdots \oplus x_k$ where the $x_i$ are pairwise distinct elements, none of which are already in $\calM$.
    Then $\dim(\SEP(\calM_k)) = \rk(\calM_k)=
    \rk (\calM) + \rk(x_1) + \cdots + \rk(x_k) =10+k$. 
    By using Corollary~\ref{free_sum}, we have
    \[
        h^*(\Sigma(\calM_k); t) = h^*(\SEP(\calM); t) h^*(\SEP(x_1); t) \cdots h^*(\SEP(x_k); t) = 
        h^*(\SEP(\calM); t)(1+t)^k,
    \]
    from which it follows that $\gamma(\Sigma(\calM_k)) = \gamma(\Sigma(\calM))$.
\end{proof}

Computational evidence suggests that the $\gamma$-vectors for $\Sigma(M^*(K_{3,n}))$ fail to be nonnegative for all even $n \geq 6$, although we do not aim to prove so here.
Instead, we examine how these matroids are situated between ordinary SEPs and generalized SEPs: by deleting the two edges $e$ and $e'$ that are incident to the same degree-$3$ vertex of $K_{3,n}$
from $M^*(K_{3,n})$, the resulting matroid is now graphic.
In fact, by invoking Whitney's $2$-isomorphism theorem \cite[Theorem 5.3.1]{Oxley}, it is a brief exercise to show that there is, up to isomorphism, a unique graph $\Gamma(n)$ such that $M^*(K_{3,n}) - \{e,e'\} = M(\Gamma(n))$.
We will be more precise about what this graph looks like in Section~\ref{sec: proof}.
For now, we simply state our main results regarding $\Sigma(\Gamma(n))$.


\begin{thm}\label{thm: h star of gamma}
    For each $n \geq 1$,
    \begin{equation}\label{eq:formula}   
        h^*(\Sigma(\Gamma(n+1));t) = \sum_{\ell=0}^{\lfloor n/2 \rfloor}(2t)^{n-2\ell-1}\left( 2t\binom{n}{2\ell} + (1+t)^2\binom{n}{2\ell+1} \right) \sum_{(p,q) \in S_{2\ell}}\binom{2\ell+1}{p+q+1}f_{p,q,\ell}(t),
    \end{equation}
        where 
    \[
        S_k =\{(x,y) \in \ZZ^2 \mid x \geq 0, y \geq 0, x+y \leq k\}
    \]
    and
    \[
        f_{p,q,\ell}(t) = t^{2\ell-p-q}\sum_{i=0}^p\sum_{j=0}^q\binom{p}{i}\binom{q}{j}\binom{2\ell-p-q}{\ell-q-i+j}t^{2(i+j)}. 
    \]
\end{thm}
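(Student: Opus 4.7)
The plan is to apply Theorem \ref{thm: h-star contributions}: exhibit a dissecting tree set $\calT$ for $\Gamma(n+1)$, choose a convenient reference vertex $v$, and count the trees $T \in \calT$ by the number of edges pointing away from $v$. The right-hand side of \eqref{eq:formula} strongly suggests a product decomposition of $\Gamma(n+1)$ into a ``backbone'' together with $n$ interchangeable ``rungs''; the outer factor $\binom{n}{k}(2t)^{n-k}$ would then encode a choice of $k$ rungs in a distinguished state together with the orientation contributions of the other $n-k$, while the inner sum would encode the backbone data.

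To make this precise, I first identify $\Gamma(n+1)$ explicitly. Using $M^*(K_{3,n+1}) - \{e,e'\} = (M(K_{3,n+1})/\{e,e'\})^*$, the matroid is graphic iff $K_{3,n+1}/\{e,e'\}$ is planar. Taking $e,e'$ to be two of the three edges incident to a fixed vertex on the $(n+1)$-side of $K_{3,n+1}$ merges three vertices into one and yields a planar graph $H_n$ with $n+2$ vertices and $3n+1$ edges: a pair of core vertices $w,u_3$ joined by a single edge, plus $n$ rungs, where each rung is a vertex $v_j$ attached to $w$ by a pair of parallel edges and to $u_3$ by a single edge. Then $\Gamma(n+1) = H_n^*$ is the planar dual, and it inherits the $n$-rung product structure.

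Next, produce $\calT$ using the Gr\"obner basis of Lemma \ref{lem: initial terms}: since its initial ideal is squarefree, it yields a regular unimodular triangulation of $\Sigma(\Gamma(n+1))$ whose maximal boundary simplices correspond to a dissecting tree set $\calT$. The rung/backbone decomposition of $\Gamma(n+1)$ should translate into a clean parameterization of each $T \in \calT$: (i) a choice of $k$ out of $n$ ``active'' rungs with a local configuration on each, (ii) a pair $(p,q) \in S_k$ recording an edge-count split on the two sides of the backbone, and (iii) the placement of those $p+q+1$ distinguished edges on the backbone in $\binom{2\lfloor k/2\rfloor + 1}{p+q+1}$ ways.

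Finally, fix a reference vertex $v$ on the backbone and tally: each inactive rung contributes two edges with exactly one pointing away from $v$ in two symmetric ways, giving the factor $(2t)^{n-k}$, and the backbone of effective length $2\ell+1$ with split $(p,q)$ contributes the polynomial $f_{p,q,2\ell}(t)$, whose inner double sum should arise by classifying orientations according to the numbers $i \leq p$ and $j \leq q$ of ``flipped'' edges on each side, with the central binomial $\binom{2\ell - p - q}{\ell - q - i + j}$ counting valid distributions of the remaining backbone edges by a lattice-path argument. The second equality is then a reorganization combining even ($k = 2\ell$) and odd ($k = 2\ell+1$) terms using the identity $f_{p,q,2\ell+1}(t) = (1+t)^2 f_{p,q,2\ell}(t)$ built into the definition and the equality $\lfloor 2\ell/2 \rfloor = \lfloor (2\ell+1)/2 \rfloor = \ell$. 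The main obstacle will be verifying that the Gr\"obner-basis facets are genuinely parameterized by the rung/backbone data, and deriving the closed form for $f_{p,q,2\ell}(t)$ rigorously — in particular justifying the appearance of $\binom{2\ell - p - q}{\ell - q - i + j}$ through an explicit bijection between backbone orientations and lattice paths constrained by the spanning-tree condition.
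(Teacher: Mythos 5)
Your overall strategy is the same as the paper's: use the Gr\"obner basis of Lemma~\ref{lem: initial terms} to produce a dissecting tree set $\calT$ for $\Gamma(n+1)$, then invoke Theorem~\ref{thm: h-star contributions} to compute $h^*(\Sigma(\Gamma(n+1)))$ by counting, for each $T \in \calT$, the edges pointing away from a fixed reference vertex (the paper uses $u_1$). Your identification of $\Gamma(n+1)$ as the planar dual of $K_{3,n+1}/\{e,e'\}$ matches the paper's Remark, and your ``backbone plus $n$ rungs'' picture lines up with the paper's explicit description of $\Gamma(n+1)$ as the odd cycle $u_1 v_1 u_2 v_2 \cdots u_{n+1} u_1$ with chords $u_i u_{i+1}$. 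Your ``inactive rung'' contributing $2t$ is exactly the paper's notion of a \emph{chordless pair}.

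However, the parameterization of $\calT$ you sketch has a genuine gap, and it is the crux of the proof. You assert that the factor $\binom{n}{k}$ arises from directly ``choosing $k$ out of $n$ active rungs'' and that the $\binom{2\lfloor k/2\rfloor+1}{p+q+1}$ factor then comes from placing distinguished edges on the backbone. That is not how the count actually works, and no such clean bijection is visible. The actual structure requires: (a) distinguishing two types of \emph{chorded pair} (the paper's types $\alpha$ and $\beta$), which contribute $1+t^2$ versus $2t$ respectively and so cannot be absorbed into a single local state; (b) the \emph{modified spanning tree} construction, which replaces the one unpaired edge (present exactly when $\etilde \in T$) by a phantom edge so that $\etilde$ itself becomes part of a chorded pair; (c) an extra parameter $i$ recording which triangle $u_i v_i u_{i+1}$ is entirely absent from the tree, and a case split on whether $\etilde$'s pair has type $\alpha$ or $\beta$. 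Summing over $i$ and these cases gives an expression like $\binom{n-(p+q)}{2\ell-(p+q)}\sum_i \binom{n-i}{p}\binom{i-1}{q-1} + \binom{n-1-(p+q)}{2\ell-1-(p+q)}\sum_i\binom{n-i}{p}\binom{i-1}{q}$, and only after applying the nontrivial binomial identity of Lemma~\ref{lem: breaking up coefficient} does this collapse to $\binom{n}{2\ell}\binom{2\ell+1}{p+q+1}$. You flag this as ``the main obstacle,'' and indeed it is precisely where the work lies: without the $\alpha/\beta$ distinction, the modified-tree device, and the Lemma~\ref{lem: breaking up coefficient} identity, the proposed direct ``choose $k$ rungs, then place $p+q+1$ marks'' bijection does not exist. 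Your lattice-path intuition for the inner binomial $\binom{2\ell-p-q}{\ell-q-i+j}$ is in the right spirit (it counts orientations of the type-$\beta$ chorded pairs so that exactly $\ell$ chords point one way), but the surrounding bookkeeping is substantially more involved than a product decomposition.
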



For example, we can compute $h^*(\Sigma(\Gamma(3));t)$ as 
\[\begin{aligned}
 h^*(\Sigma(\Gamma(3));t) &= (2t)^1\left(2t\binom{2}{0} + (1+t)^2\binom{2}{1}\right)\binom{1}{1}f_{0,0,0}(t) \\
 &{} \qquad + (2t)^{-1}\left(2t\binom{2}{2} + (1+t)^2\binom{2}{3}\right)\sum_{(p,q) \in S_2} \binom{3}{p+q+1}f_{p,q,1}(t),
\end{aligned}
\]
which simplifies to 
\[
    h^*(\Sigma(\Gamma(3));t) = 1 + 10t + 22t^2 + 10t^3 + t^4.
\]

Our second main result regarding $\Gamma(n)$ is the following. 
For readability, we defer its proof to Section~\ref{sec: proof2}.

\begin{thm}\label{thm: gamma vec}
    For each $n \geq 1$,
    \begin{align*}
        \gamma(\Sigma(\Gamma(n+1));t)&=\sum_{\ell=0}^{\lfloor n/2 \rfloor}(2t)^{n-2\ell-1}\left(2\binom{n}{2\ell}t+\binom{n}{2\ell+1}\right)\sum_{a=0}^\ell\binom{2a}{a}t^a\\
            &=    \sum_{m = 0}^{n}
    \binom{n}{m}
    (2t)^{n-m} 
    \sum_{a=0}^{\lfloor m/2 \rfloor} \binom{2a}{a} t^a.
    \end{align*}
    In particular, $\Sigma(\Gamma(n+1))$ is $\gamma$-nonnegative for all $n$.
\end{thm}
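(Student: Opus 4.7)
The plan is to reduce Theorem~\ref{thm: gamma vec} to a single identity on the inner sums appearing in Theorem~\ref{thm: h star of gamma}, and then prove that identity by a bivariate generating function computation. First, $\dim \Sigma(\Gamma(n+1)) = 2n$ (this will follow from Proposition~\ref{prop:dim} once $\Gamma(n+1)$ is described explicitly in Section~\ref{sec: proof}), and $h^*(\Sigma(\Gamma(n+1));t)$ is a palindrome of degree $2n$, so the $\gamma$-coefficients are uniquely determined by $h^*(t) = \sum_i \gamma_i\, t^i (1+t)^{2n-2i}$.

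The key observation is that both Theorem~\ref{thm: h star of gamma} and the claimed formula in Theorem~\ref{thm: gamma vec} share the convolution shape $\sum_{k=0}^n \binom{n}{k}(2t)^{n-k}\,\Theta_k(t)$: on the $h^*$-side $\Theta_k(t) = F_k(t) := \sum_{(p,q)\in S_k}\binom{2\lfloor k/2\rfloor + 1}{p+q+1}f_{p,q,k}(t)$, and on the $\gamma$-side $\Theta_k(t) = C_k(t) := \sum_{a=0}^{\lfloor k/2\rfloor}\binom{2a}{a}\,t^a$. Thus the theorem reduces to the term-by-term identity
\begin{equation}
F_k(t) \;=\; \sum_{a=0}^{\lfloor k/2\rfloor}\binom{2a}{a}\,t^a(1+t)^{2k-2a}. \tag{$\ast$}
\end{equation}
Assuming $(\ast)$, one substitutes it into the $h^*$-formula and reindexes via $c = n-k$, $b = a$, $i = b+c$: the double sum rewrites as $\sum_i t^i (1+t)^{2n-2i}\sum_b \binom{n}{i-b}\,2^{i-b}\binom{2b}{b}$, and the inner sum coincides exactly with the coefficient of $t^i$ in $\sum_m \binom{n}{m}(2t)^{n-m}C_m(t)$, proving the theorem. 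Since Theorem~\ref{thm: h star of gamma} embeds $f_{p,q,2\ell+1}(t) = (1+t)^2 f_{p,q,2\ell}(t)$, we have $F_{2\ell+1} = (1+t)^2 F_{2\ell}$, so $(\ast)$ need only be verified for $k = 2\ell$.

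For the even case, my approach is to introduce the bivariate generating function $\Phi(t,x) := \sum_{\ell \geq 0}F_{2\ell}(t)\,x^\ell$ and show it equals
\[
\frac{1}{\bigl(1 - x(1+t)^4\bigr)\sqrt{1 - 4tx(1+t)^2}},
\]
which is what the right-hand side of $(\ast)$ contributes after swapping the order of summation and invoking $\sum_{a \geq 0}\binom{2a}{a}y^a = 1/\sqrt{1-4y}$ with $y = tx(1+t)^2$. To compute $\Phi(t,x)$ directly, I would encode $\binom{2\ell+1}{p+q+1}$ as $[y^{p+q+1}](1+y)^{2\ell+1}$ and rewrite the trinomial inside $f_{p,q,2\ell}$ as a residue
\[
f_{p,q,2\ell}(t) = t^{2\ell-p-q}\,[z^{\ell-q+p}]\bigl((1+z)^{2\ell-p-q}(z+t^2)^p(1+t^2z)^q\bigr),
\]
then interchange orders so the $\ell$-summation becomes a geometric series whose denominator is quadratic in $z$. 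The resulting bivariate coefficient extraction in $y$ and $z$, evaluated either by residues or by Lagrange inversion applied to that quadratic, produces the required square-root denominator.

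The principal obstacle is this last generating function identification: because the target function is algebraic rather than rational, the simplifications in $y$ and $z$ must be orchestrated carefully and the correct branch of the square root selected. Once $(\ast)$ is in hand, $\gamma$-nonnegativity is immediate, since every coefficient of $\sum_m \binom{n}{m}(2t)^{n-m}C_m(t)$ is a nonnegative integer combination of central binomial coefficients.
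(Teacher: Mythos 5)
Your reduction of the theorem to the single identity $(\ast)$, namely
\[
F_k(t) \;=\; \sum_{a=0}^{\lfloor k/2\rfloor}\binom{2a}{a}\,t^a(1+t)^{2k-2a},
\]
is correct, and so is your observation that the odd case follows from the even case via $F_{2\ell+1}=(1+t)^2F_{2\ell}$. This is precisely the role of Theorem~\ref{thm: f to gamma} in the paper, and the target generating function
\[
\frac{1}{\bigl(1-x(1+t)^4\bigr)\sqrt{1-4tx(1+t)^2}}
\]
that you write down for the right-hand side of $(\ast)$ matches the paper's computation for $\sum_\ell F_\ell\lambda^\ell$ exactly. So your overall strategy coincides with the paper's.

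There are, however, two issues. First, your residue formula for $f_{p,q,2\ell}(t)$ is wrong. Testing $(p,q,\ell)=(1,0,1)$: the definition gives $f_{1,0,2}(t)=t(1+t^2)$, but your formula yields $t\cdot[z^2]\bigl((1+z)(z+t^2)\bigr)=t$. The correct extraction is
\[
f_{p,q,2\ell}(t) \;=\; t^{2\ell-p-q}\,[z^\ell]\Bigl((1+z)^{2\ell-p-q}\,(1+t^2z)^p\,(z+t^2)^q\Bigr),
\]
which you can check recovers $\binom{2\ell-p-q}{\ell-q-i+j}$ after matching the $z$-degree.

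Second, and more importantly, the heart of the argument — showing that the generating function of the left-hand side of $(\ast)$ equals the algebraic target — is exactly where you stop, and this is not a routine coefficient extraction. Because $\ell$ appears simultaneously in the exponent of $(1+z)$, in the binomial $\binom{2\ell+1}{p+q+1}$, and in the $z$-degree being extracted, summing directly over $\ell$ does not readily decouple. The paper resolves this by a genuine trick: it replaces $\ell$ by a free auxiliary parameter $m$ in the middle binomial only (defining $G_{\ell,m}$ with $\binom{2m-p-q}{m-q-i+j}$), so that the $m$-sum closes via $\sum_n\binom{2n+k}{n}x^n=\frac{1}{\sqrt{1-4x}}L^k$, then uses a telescoping identity for $\sum_{p,q}\binom{2\ell+1}{p+q+1}x^py^q$ (Lemma~\ref{hodai3}) to close the $p,q$ sums, extracts the coefficient of $\lambda^\ell$, and only then sums over $\ell$, invoking two further auxiliary binomial identities (Lemmas~\ref{hodai1} and \ref{hodai2}) to reach the target. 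Your sketch leaves all of this to ``careful orchestration,'' which understates a page and a half of nontrivial work and three separate lemmas. So the proposal is correct in outline and matches the paper's route, but contains one concrete error and omits the genuinely hard step.
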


As a corollary of Theorem~\ref{thm: h star of gamma}, we can obtain the normalized volume of $\Sigma(\Gamma(n+1))$. 
For its proof, as well as the proof of Theorem~\ref{thm: h star of gamma}, 
we collect some well-known formulas on the summation of binomial coefficients.
For reference, see, for example, \cite[Equations (5.22), (5.26), (5.21)]{GrahamKnuthPatashnik}.
\begin{lem}\label{lem:binom}
    Let $a,b,c,d \in \ZZ_{\geq 0}$. Then the following hold{\rm :} 
    \begin{enumerate}[label={\rm (\roman*)}]
        \item $\sum_{i=0}^c \binom{a}{i}\binom{b}{c-i}=\binom{a+b}{c}${\rm ;}
        \item When $b \geq a$, $\sum_{i=0}^c \binom{a+i}{b}\binom{c-i}{d}=\binom{a+c+1}{b+d+1}${\rm ;}
        \item $\binom{a}{b}\binom{b}{c}=\binom{a-c}{b-c}\binom{a}{c}$.
    \end{enumerate}
\end{lem}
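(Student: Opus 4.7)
The plan is to prove each of the three identities by a standard double-counting (or, equivalently, algebraic) argument; none of them requires new ideas, so the work is essentially a matter of choosing the cleanest presentation. Identity (i) is the classical Vandermonde convolution. I would fix an $(a+b)$-element ground set partitioned into parts $A$ and $B$ of sizes $a$ and $b$, and observe that both sides enumerate $c$-element subsets of the union: the right-hand side directly, and the left-hand side by conditioning on the number $i$ of elements chosen from $A$. A two-line algebraic alternative — comparing coefficients of $x^c$ in $(1+x)^a(1+x)^b = (1+x)^{a+b}$ — gives the same conclusion.

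For (ii), I would interpret $\binom{a+c+1}{b+d+1}$ as the number of $(b+d+1)$-element subsets $T$ of $\{0,1,\ldots,a+c\}$ and partition these by the value $k$ of the $(b+1)$-th smallest element of $T$: one then chooses the $b$ elements of $T$ strictly below $k$ in $\binom{k}{b}$ ways and the $d$ elements strictly above in $\binom{a+c-k}{d}$ ways. Re-indexing by $i = k - a$ converts this into the stated left-hand side. The only subtle point is to check that the range $0 \leq i \leq c$ correctly captures the nonzero contributions, using the convention that $\binom{n}{k} = 0$ outside $0 \leq k \leq n$; this is a brief boundary check. An alternative I would keep in reserve is induction on $c$ using Pascal's rule, which reduces (ii) to (i).

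Identity (iii) is the subset-of-subset identity. I would count pairs $(T, S)$ with $T \subseteq S \subseteq [a]$, $|T| = c$, and $|S| = b$ in two ways: choosing $S$ first and then $T \subseteq S$ gives $\binom{a}{b}\binom{b}{c}$, while choosing $T$ first and then extending to $S$ by picking $S \setminus T$ from the remaining $a - c$ elements gives $\binom{a}{c}\binom{a-c}{b-c}$. If one prefers, the identity follows by writing both sides as $\frac{a!}{c!\,(b-c)!\,(a-b)!}$, which is a one-line verification.

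None of the three parts presents real difficulty. The mildly fiddly step is (ii), where one must be careful about the correspondence between $k$ and $i$ and about the implicit convention on $\binom{n}{k}$ at the boundaries of the summation; once those are settled, everything falls into place.
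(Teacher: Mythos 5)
The paper gives no proof of this lemma; it is quoted as a collection of well-known binomial identities, so there is no in-paper argument to compare against. Your arguments for (i) and (iii) are correct and standard (Vandermonde convolution; subset-of-a-subset), and either the combinatorial or the algebraic version you sketch suffices.

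For (ii), however, the ``brief boundary check'' you defer is precisely the step that fails, and the identity as literally stated is false without an additional hypothesis. Your bijection establishes
\[
\binom{a+c+1}{b+d+1} \;=\; \sum_{k \ge 0} \binom{k}{b}\binom{a+c-k}{d},
\]
but under the substitution $i = k - a$ the stated left-hand side accounts only for $k = a, a+1, \dots, a+c$. When $b < a$, the terms with $b \le k < a$ are positive and are dropped. Concretely, with $a=1$ and $b=c=d=0$ the left-hand side is $\binom{1}{0}\binom{0}{0} = 1$ while the right-hand side is $\binom{2}{1} = 2$. The identity does hold provided $b \ge a$ (equivalently, if one lets the sum start at $i = -a$), since then $\binom{k}{b}=0$ for all $0 \le k < a$ and the omitted terms vanish. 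The paper only ever applies (ii) after a change of index that reduces it to the case $a = 0$ (see the proof of Lemma~\ref{lem: breaking up coefficient}), so the downstream uses are unaffected; nevertheless, your proof should state and use the hypothesis $b \ge a$ explicitly rather than asserting the boundary is routine, because as written the argument would ``prove'' a false statement.
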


\begin{cor}\label{cor: normalized volume}
    The normalized volume of $\Sigma(\Gamma(n+1))$ is
    \[
        2^n \sum_{k=0}^n (k+1)\binom{n}{k}\binom{k}{\lfloor\frac{k}{2}\rfloor}.
    \]
\end{cor}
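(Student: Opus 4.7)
The plan is to use the standard fact that the normalized volume of a $d$-dimensional lattice polytope $P$ equals $h^*(P;1)$, so Corollary~\ref{cor: normalized volume} amounts to evaluating the closed-form expression from Theorem~\ref{thm: h star of gamma} at $t=1$ and massaging it into the stated shape. Setting $t=1$ in the factored version of \eqref{eq:formula} immediately gives
\[
h^*(\Sigma(\Gamma(n+1));1) = \sum_{\ell=0}^{\lfloor n/2\rfloor} 2^{n-2\ell-1}\bigl[2\binom{n}{2\ell} + 4\binom{n}{2\ell+1}\bigr]\, f_{p,q,2\ell}(1)\!\cdot\!\!\sum_{(p,q)\in S_{2\ell}}\!\binom{2\ell+1}{p+q+1},
\]
so the task reduces to two evaluations: the inner polynomial $f_{p,q,2\ell}(1)$ and the triangular sum over $S_{2\ell}$.

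First I would show that $f_{p,q,2\ell}(1)=\binom{2\ell}{\ell}$, independently of $p$ and $q$. Setting $t=1$ collapses the defining double sum to $\sum_{i,j}\binom{p}{i}\binom{q}{j}\binom{2\ell-p-q}{\ell-q-(i-j)}$. Grouping by $k=i-j$ and using the generating-function identity $(1+x)^p(1+x^{-1})^q = x^{-q}(1+x)^{p+q}$, the inner sum $\sum_{i-j=k}\binom{p}{i}\binom{q}{j}$ evaluates to $\binom{p+q}{k+q}$. Substituting $m=k+q$ turns what remains into $\sum_m\binom{p+q}{m}\binom{2\ell-p-q}{\ell-m}$, which is $\binom{2\ell}{\ell}$ by the Vandermonde identity (Lemma~\ref{lem:binom}(i)).

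Second I would compute the sum over $S_{2\ell}$. There are $s+1$ lattice points $(p,q)$ with $p+q=s$, so the sum equals $\sum_{s=0}^{2\ell}(s+1)\binom{2\ell+1}{s+1} = \sum_{r=1}^{2\ell+1} r\binom{2\ell+1}{r}$, which is the well-known $(2\ell+1)\cdot 2^{2\ell}$. Plugging these two evaluations back in, the powers of $2$ collect as $2^{n-2\ell-1}\cdot 2^{2\ell}\cdot 2 = 2^{n}$, yielding
\[
h^*(\Sigma(\Gamma(n+1));1) = 2^n\sum_{\ell=0}^{\lfloor n/2\rfloor}(2\ell+1)\binom{2\ell}{\ell}\!\left[\binom{n}{2\ell} + 2\binom{n}{2\ell+1}\right].
\]

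Finally I would reindex by $k$ with $k=2\ell$ or $k=2\ell+1$. For even $k=2\ell$ the summand $(2\ell+1)\binom{2\ell}{\ell}\binom{n}{2\ell}$ is exactly $(k+1)\binom{n}{k}\binom{k}{\lfloor k/2\rfloor}$. For odd $k=2\ell+1$, the identity $\binom{2\ell+1}{\ell}=\frac{2\ell+1}{\ell+1}\binom{2\ell}{\ell}$ (a consequence of Lemma~\ref{lem:binom}(iii)) gives $(k+1)\binom{k}{\lfloor k/2\rfloor} = (2\ell+2)\binom{2\ell+1}{\ell} = 2(2\ell+1)\binom{2\ell}{\ell}$, matching the $2\binom{n}{2\ell+1}$ term. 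Merging the two parities into a single sum over $k=0,\dots,n$ yields the claimed closed form. The main obstacle is the first step: once the $p,q$-dependence of $f_{p,q,2\ell}(1)$ is dissolved into $\binom{2\ell}{\ell}$ via the Vandermonde trick, the remainder is routine binomial bookkeeping.
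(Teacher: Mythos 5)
Your proposal is correct and follows essentially the same route as the paper: evaluate the second displayed form of Theorem~\ref{thm: h star of gamma} at $t=1$, collapse $f_{p,q,2\ell}(1)$ to $\binom{2\ell}{\ell}$ via Vandermonde, compute $\sum_{(p,q)\in S_{2\ell}}\binom{2\ell+1}{p+q+1}=(2\ell+1)2^{2\ell}$, and merge the even and odd parity terms into a single sum over $k$. The only cosmetic differences are that you establish $f_{p,q,2\ell}(1)=\binom{2\ell}{\ell}$ by grouping on $i-j$ and then applying Vandermonde once, whereas the paper applies Lemma~\ref{lem:binom}(i) twice in succession, and that your display pulls $f_{p,q,2\ell}(1)$ outside the $(p,q)$-sum a line before you justify its independence of $(p,q)$, which is a small presentation quibble rather than a gap.
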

\begin{proof}
    We may set $t=1$ in \eqref{eq:formula}.
    For $f_{p,q,\ell}(1)$, we see that 
    \[f_{p,q,\ell}(1)=\sum_{i=0}^p\sum_{j=0}^q\binom{p}{i}\binom{q}{j}\binom{2\ell-p-q}{\ell-q-i+j} 
    =\sum_{j=0}^q\binom{q}{j}\binom{2\ell-q}{\ell-q+j}=\sum_{j=0}^q\binom{q}{j}\binom{2\ell-q}{\ell-j}=\binom{2\ell}{\ell}\]
    by using Lemma~\ref{lem:binom} (i) twice.
    Hence, 
    \[\sum_{(p,q) \in S_{2\ell}}\binom{2\ell+1}{p+q+1}\binom{2\ell}{\ell}=\binom{2\ell}{\ell}\sum_{m=0}^{2\ell}(m+1)\binom{2\ell+1}{m+1}=\binom{2\ell}{\ell}\sum_{m=0}^{2\ell}(2\ell+1)\binom{2\ell}{m}=2^{2\ell}(2\ell+1)\binom{2\ell}{\ell}.\]
    Therefore, 
    \begin{align*}
    h^*(\Sigma(\Gamma(n+1)); 1)&=\sum_{\ell=0}^{\lfloor n/2 \rfloor}2^{n-2\ell}\left( \binom{n}{2\ell} + 2\binom{n}{2\ell+1} \right) \sum_{(p,q) \in S_{2\ell}}\binom{2\ell+1}{p+q+1}\binom{2\ell}{\ell} \\
    &=\sum_{\ell=0}^{\lfloor n/2 \rfloor}2^{n-2\ell}\left( \binom{n}{2\ell} + 2\binom{n}{2\ell+1} \right) 2^{2\ell}(2\ell+1)\binom{2\ell}{\ell} \\
    &=2^n \left(\sum_{\ell=0}^{\lfloor n/2 \rfloor}(2\ell+1)\binom{n}{2\ell}\binom{2\ell}{\ell} + 2\sum_{\ell=0}^{\lfloor n/2 \rfloor}(2\ell+1)\binom{n}{2\ell+1}\binom{2\ell}{\ell} \right) \\
    &=2^n \left(\sum_{\ell=0}^{\lfloor n/2 \rfloor}(2\ell+1)\binom{n}{2\ell}\binom{2\ell}{\ell} + \sum_{\ell=0}^{\lfloor n/2 \rfloor}(2\ell+2)\binom{n}{2\ell+1}\binom{2\ell+1}{\ell} \right) \\
    &=2^n \sum_{k=0}^n (k+1)\binom{n}{k}\binom{k}{\lfloor\frac{k}{2}\rfloor}. \qedhere
    \end{align*}
\end{proof}



\section{Proof of Theorem~\ref{thm: h star of gamma}}\label{sec: proof}

In this section, we put all of the necessary pieces together for the proof of Theorem~\ref{thm: h star of gamma}. 
We will produce a triangulating tree set by identifying which oriented spanning trees correspond to monomials that are not divisible by an initial monomial of one of the types in Lemma~\ref{lem: initial terms}.
Before this, we collect several identities needed in the proof of Theorem~\ref{thm: h star of gamma}.

\begin{lem}\label{lem: breaking up coefficient}
    Let $n > 0$ and $p,q,\ell \geq 0$ such that $p+q \leq 2\ell \leq n$.
    \begin{enumerate}[label={\rm (\roman*)}]
        \item If $q \geq 1$, then
        \[
            \binom{n-(p+q)}{2\ell-(p+q)}\sum_{i=0}^{n-1} \binom{n-1-i}{p}\binom{i}{q-1} + \binom{n-1-(p+q)}{2\ell-1-(p+q)}\sum_{i=0}^{n-1} \binom{n-1-i}{p}\binom{i}{q}
         \]
         is equal to
        \[
            \binom{n}{2\ell}\binom{2\ell + 1}{p+q + 1}.
       \]
        \item For all $n,p,k,\ell \geq 0$ such that $p+q \leq 2\ell \leq n-1$,
        \[
            \binom{n-1-(p+q)}{2\ell-(p+q)}\sum_{i=0}^{n-1} \binom{n-1-i}{p}\binom{i}{q}= \binom{n}{2\ell+1}\binom{2\ell+1}{p+q+1}.
        \]
    \end{enumerate}
\end{lem}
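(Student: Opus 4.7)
The plan is to first collapse each of the two inner sums over $i$ into a single binomial coefficient via Lemma~\ref{lem:binom}(ii), and then combine the resulting factors on the left-hand side using Lemma~\ref{lem:binom}(iii) and Pascal's rule.

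For the first step, I would substitute $j = i-1$ in the inner sums. Under the convention $\binom{-1}{r} = 0$ for $r \geq 0$, the $i=0$ term drops out, and each sum takes the form $\sum_{j=0}^{n-1} \binom{n-1-j}{p}\binom{j}{r}$, which is the content of Lemma~\ref{lem:binom}(ii) with $a=0$, $c=n-1$, $b=r$, $d=p$. This yields
\[
\sum_{i=0}^n \binom{n-i}{p}\binom{i-1}{q-1} = \binom{n}{p+q}, \qquad \sum_{i=0}^n \binom{n-i}{p}\binom{i-1}{q} = \binom{n}{p+q+1}.
\]

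For part (ii), the left-hand side now reads $\binom{n-1-(p+q)}{2\ell-(p+q)}\binom{n}{p+q+1}$. Rewriting $n-1-(p+q) = n-(p+q+1)$ and $2\ell-(p+q) = (2\ell+1)-(p+q+1)$, Lemma~\ref{lem:binom}(iii) applied with $a=n$, $b=2\ell+1$, $c=p+q+1$ converts this product directly into $\binom{n}{2\ell+1}\binom{2\ell+1}{p+q+1}$. For part (i), the left-hand side becomes
\[
\binom{n-(p+q)}{2\ell-(p+q)}\binom{n}{p+q} + \binom{n-1-(p+q)}{2\ell-1-(p+q)}\binom{n}{p+q+1}.
\]
Two applications of Lemma~\ref{lem:binom}(iii)—the first with $a=n$, $b=2\ell$, $c=p+q$, and the second after noting $n-1-(p+q) = n-(p+q+1)$ and $2\ell-1-(p+q) = 2\ell-(p+q+1)$, with $a=n$, $b=2\ell$, $c=p+q+1$—rewrite this as $\binom{n}{2\ell}\bigl(\binom{2\ell}{p+q} + \binom{2\ell}{p+q+1}\bigr)$, which collapses to $\binom{n}{2\ell}\binom{2\ell+1}{p+q+1}$ by Pascal's rule.

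The argument is essentially routine once the substitution $j=i-1$ is spotted. The only thing requiring a bit of care is bookkeeping of the index shifts when applying Lemma~\ref{lem:binom}(iii), since each use rewrites a product of binomials in a slightly different way; I would not expect this to be a genuine obstacle.
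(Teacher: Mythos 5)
Your argument is correct and is essentially what the paper intends; the paper's own ``proof'' is a single sentence asserting the identities follow from Lemma~\ref{lem:binom}, so your write-up supplies exactly the missing detail. The collapse of the inner sums via Lemma~\ref{lem:binom}(ii) after the shift $j=i-1$, followed by trinomial revision (Lemma~\ref{lem:binom}(iii)) and Pascal's rule, is the natural route. Two minor notes: the paper cites ``Lemma~\ref{lem:binom}(i) and (ii)'' while you invoke (ii), (iii), and Pascal's rule --- since Pascal's rule is the $a=1$ instance of (i) and trinomial revision is indispensable for the factor bookkeeping, the paper's citation is best read as imprecise and your choice of tools is the accurate one; and the collapse $\sum_{i=0}^n\binom{n-i}{p}\binom{i-1}{q-1}=\binom{n}{p+q}$ under the stated convention requires $q\geq 1$ (for $q=0$ the sum vanishes while $\binom{n}{p}$ does not), a restriction that is implicit in the lemma's intended scope (the first inner sum only arises in the application when $R(T')$ contains $\etilde$ as one of its $q\geq 1$ type-$\alpha$ pairs) and worth flagging when you use it.
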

\begin{proof}
    The formula in (i) follows from applying Lemma~\ref{lem:binom} (ii) to each sum.
    For the first sum, by using 
    $a = 0$, $b=q-1\ (\ge 0=a)$, $c=n-1\ge 0$, and $d=p \ge 0$ in Lemma~\ref{lem:binom} (ii),
    we have
    \[
    \sum_{i=0}^{n-1} \binom{n-1-i}{p}\binom{i}{q-1}=
    \binom{n}{p+q}.
    \]   
    For the second sum, by using 
    $a = 0$, $b=q$, $c=n-1$, and $d=p$ in Lemma~\ref{lem:binom} (ii),
    we have
    \[
    \sum_{i=0}^{n-1} \binom{n-1-i}{p}\binom{i}{q}=
    \binom{n}{p+q+1}.
    \]
    From these facts and Lemma \ref{lem:binom} (iii), 
    \[\begin{aligned}
        &\qquad \sum_{i=0}^{n-1} \binom{n-i-1}{p}\binom{i}{q-1}\binom{n-(p+q)}{2\ell-(p+q)} + \sum_{i=0}^{n-1} \binom{n-i-1}{p}\binom{i}{q}\binom{n-1-(p+q)}{2\ell-1-(p+q)} \\
        &= \binom{n-(p+q)}{2\ell-(p+q)}\binom{n}{p+q} + \binom{n-1-(p+q)}{2\ell-1-(p+q)}\binom{n}{p+q+1} \\
        &= \binom{n}{2\ell}\binom{2\ell}{p+q} + \binom{n}{2\ell}\binom{2\ell}{p+q+1} \\
        &= \binom{n}{2\ell}\binom{2\ell +1}{p+q+1}.
    \end{aligned}\]
    The formula in (ii) follows similarly.
\end{proof}

Now, note that $M^*(K_{3,n+1}) - e$ is never graphic for $n \geq 3$ (see Proposition~\ref{prop:gamma}). 
However, by an appropriate choice of $e,e'$ from $M^*(K_{3,n+1})$, namely where $e$ and $e'$ are edges incident to the same degree-$3$ vertex of $K_{3,n+1}$, we see that the matroid $M^*(K_{3,n+1}) - \{e,e'\}$ may be represented by the matrix $[I_{2n} \mid B'_{n+1}]$ where $B'_n$ is the matrix $B_n$ with the second and third columns removed. 
This matrix also represents the graph $\Gamma(n+1)$ on the ground set $\{u_1,v_1,\dots,u_n,v_n,u_{n+1}\}$ with edges $u_iu_{i+1}$, $u_iv_i$, $v_iu_{i+1}$ for each $i=1,\dots,n$, as well as $u_1u_{n+1}$. 
See Figure~\ref{fig: gammas} for examples.
\begin{figure}[ht]
    \includegraphics[scale=0.75]{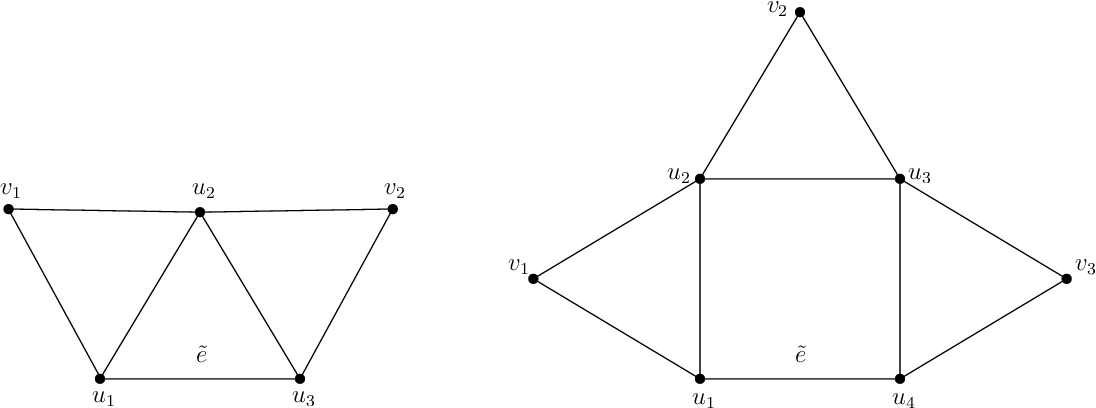}
    \caption{The graphs $\Gamma(3)$, left, and $\Gamma(4)$, right.}\label{fig: gammas}
\end{figure}

    Note that $\Gamma(n)$ is the same graph to which we alluded in the discussion preceding Theorem~\ref{thm: h star of gamma}.
    Indeed, although $K_{3,n}/e$ is not planar for any edge $e$ of $K_{3,n}$ as explained in the proof of Proposition~\ref{prop:gamma} (ii), we can get a planar graph $\tilde{G}$ by the contraction of an edge $e'$ of $K_{3,n}/e$ such that $e$ and $e'$ were incident to the same degree-$3$ vertex in $K_{3,n}$.
    By taking the dual graph of $\tilde{G}$ (in the sense of planar graphs), a planar graph $\Gamma(n)$ appears. 
    See Figure~\ref{fig: dual construct} for an illustration of $K_{3,4}$ with a choice of $e$ and $e'$ labeled, as well as a planar representation of $K_{3,4}/\{e,e'\}$ with its dual, $\Gamma(4)$, overlaid.

\begin{figure}
    \includegraphics[scale=1]{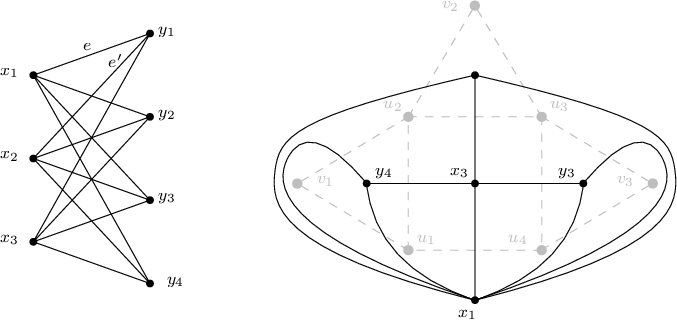}
    \caption{On the left, $K_{3,4}$ with a choice of $e$ and $e'$ labeled such that $\Gamma(4)$ is an underlying graph of $M^*(K_{3,4}) - \{e,e'\}$.
    On the right, a planar representation of $K_{3,4}/\{e,e'\}$ in solid black lines, with its dual, $\Gamma(4)$, overlaid in dashed gray lines.}\label{fig: dual construct}
\end{figure}
In what follows, we concentrate on the graph $\Gamma(n+1)$. 
Let $\etilde = u_1u_{n+1}$. 
To describe its spanning trees, it will be helpful to refer to a specific planar embedding of $\Gamma(n+1)$. 
We may view $\Gamma(n+1)$ as the $(2n+1)$-cycle $u_1v_1u_2v_2\dots u_{n+1}u_1$, together with the chords $u_iu_{i+1}$ for each $i = 1,\dots,n$.
In particular, throughout the remainder of the article, we will use ``chord'' to refer to any of the $n$ edges of the form $u_iu_{i+1}$.

Referring to a specific planar embedding will allow us to construct the $h^*$-polynomial by applying Theorem~\ref{thm: h-star contributions}.
To do this, we need to identify a triangulating tree set for $\Gamma(n+1)$.
Let $\calG_{n+1}$ be the reduced Gr\"obner basis of $I_{\Sigma(\Gamma(n+1))}$ with respect to any grevlex order in which the variable $z$ corresponding to the origin is weakest and the variables $p_{\etilde}$ and $q_{\etilde}$ corresponding to $\etilde$ are the second- and third-weakest.
Using the framework developed in Section~\ref{sec: background}, we can identify which oriented spanning trees belong to our triangulating tree set.
To help us describe them, we introduce some new terminology.

Observe that if $T$ is a spanning tree of $\Gamma(n+1)$, then for each of the triangles with vertices $u_i,v_i,u_{i+1}$, there are exactly three possibilities of edges appearing in $T$: $T$ contains either
\begin{itemize}
    \item both $u_iv_i$ and $v_iu_{i+1}$, which we call a \emph{chordless pair},
    \item both $u_iv_i$ and $u_iu_{i+1}$ or both $u_iu_{i+1}$ and $u_{i+1}v_i$, each of which we call a \emph{chorded pair}, or
    \item only $u_iv_i$ or only $v_iu_{i+1}$, each of which we call an \emph{unpaired edge}. 
\end{itemize}
For example, the spanning tree of $\Gamma(5)$ in Figure~\ref{fig: pairs} contains one chordless pair of edges, two chorded pairs of edges, and one unpaired edge. 

\begin{figure}[h]
    \centering
    \includegraphics[scale=0.6]{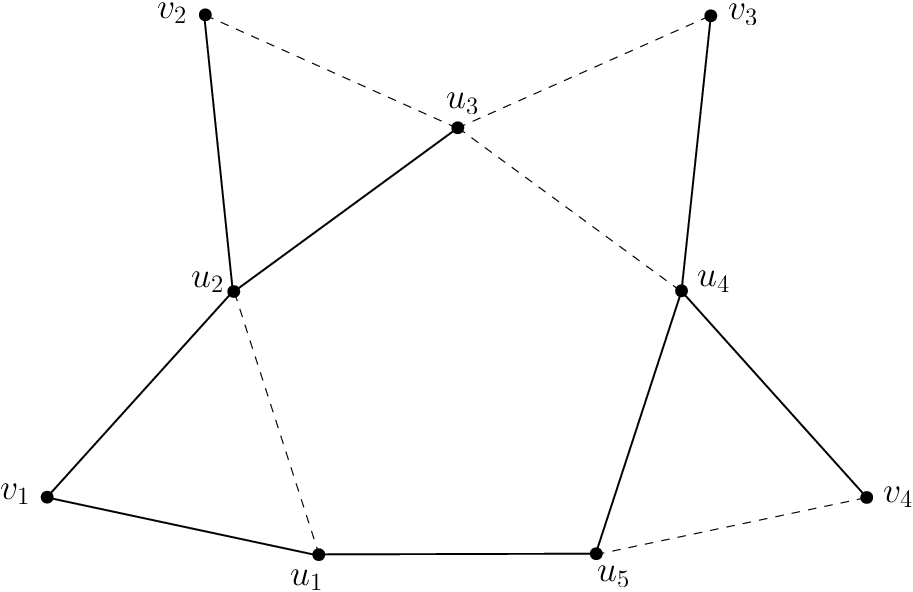}
    \caption{A spanning tree of $\Gamma(5)$, indicated by the solid edges. 
    The edges $u_1v_1$ and $v_1u_2$ form a chordless pair.
    The edges $u_2v_2$ and $u_2u_3$ form a chorded pair of type $\alpha$, while the edges $u_4v_4$ and $u_4u_5$ form a chorded pair of type $\beta$.
    The edge $v_3u_4$ is an unpaired edge.}
    \label{fig: pairs}
\end{figure}

\begin{prop}\label{prop: spanning tree characterization}
    An oriented spanning tree $T$ of $\Gamma(n+1)$ is part of the triangulating tree set if and only if, for every cycle $C$ in $\Gamma(n+1)$, either $T$ contains no more than $\lfloor\frac{|C|-1}{2}\rfloor$ edges of $C$ that are oriented in the same direction, or the following condition holds:
    \begin{itemize}
        \item[] When $|C|$ is even, both $\etilde \in T$ and $T$ contains exactly $|C|/2$ edges of $C$ that are oriented in the same direction as $\etilde$.
    \end{itemize}
\end{prop}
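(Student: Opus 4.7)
The plan is to apply Lemma~\ref{lem: initial terms} to the graphic matroid $M(\Gamma(n+1))$ and translate membership in the dissecting tree set into a cycle-by-cycle divisibility condition. To each oriented spanning tree $T$, associate the squarefree monomial $m_T:=\prod_{e\in T}p_e$, where $p_e\in\{x_e,y_e\}$ is the variable determined by $T$'s orientation of $e$; since regular unimodular triangulations correspond bijectively to reduced squarefree Gr\"obner bases \cite[Corollary 8.9]{sturmfels}, $T$ lies in the dissecting tree set if and only if $m_T$ is a standard monomial modulo $\init_\prec(I_{\Sigma(\Gamma(n+1))})$. The type~(iii) relations $x_ey_e-z^2$ never obstruct $m_T$, since $T$ uses each edge with a single orientation, so only the binomials from Lemma~\ref{lem: initial terms}(i)--(ii) produce obstructions, one family per cycle of $\Gamma(n+1)$.

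Two structural observations drive the subsequent analysis. First, the simple cycles of $\Gamma(n+1)$ are exactly the $n$ triangles $u_iv_iu_{i+1}$ (of odd length $3$, none containing $\etilde$) together with the cycles through $\etilde$, each obtained by selecting, for every $i=1,\ldots,n$, either the chord $u_iu_{i+1}$ or the detour $u_iv_iu_{i+1}$. The $n$ triangles form a basis of the cycle space of $\Gamma(n+1)-\etilde$, and any sum of two or more distinct triangles produces an interior $u$-vertex of degree $4$, so fails to be a simple cycle. In particular, \emph{every even cycle of $\Gamma(n+1)$ contains $\etilde$}. Second, under the chosen grevlex order the variables $x_{\etilde}$ and $y_{\etilde}$ are the two smallest non-$z$ variables, so whenever $\etilde\in C$ the weakest element of $C$ in the sense of Lemma~\ref{lem: initial terms}(i) is $\etilde$ itself, regardless of which of the two signed-circuit orientations of $C$ is chosen.

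Now fix a cycle $C$, a signed-circuit orientation $C^+$ (with opposite $C^-$), and let $m_\pm$ be the number of edges of $T\cap C$ that agree with $C^\pm$. If $|C|=2k+1$ is odd, Lemma~\ref{lem: initial terms}(ii) produces initial monomials $\prod_{e\in I}p_e^\pm$ for every $(k+1)$-subset $I\subseteq C$, and one such monomial divides $m_T$ iff $I$ lies inside $T_C^+$ or $T_C^-$, equivalently $\max(m_+,m_-)\ge k+1$. Non-divisibility therefore coincides with the first alternative $\max(m_+,m_-)\le k=\lfloor(|C|-1)/2\rfloor$. If $|C|=2k$ is even, Lemma~\ref{lem: initial terms}(i) produces $\prod_{e\in I}p_e^\pm$ for every $k$-subset $I\subseteq C$ with $\etilde\notin I$, and such an $I$ fits into $T_C^\pm$ iff $|T_C^\pm\setminus\{\etilde\}|\ge k$, i.e., $m_\pm\ge k+1$, or $m_\pm=k$ together with $\etilde\notin T_C^\pm$. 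Because $T$ is a tree we have $m_++m_-\le|C|-1=2k-1$, so at most one of $m_+,m_-$ can reach $k$; avoiding every even-cycle obstruction therefore reduces to $\max(m_+,m_-)\le k-1$, or $\max(m_+,m_-)=k$ with $\etilde$ sitting in whichever of $T_C^+,T_C^-$ has $k$ elements, matching the second alternative in the proposition.

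The main obstacle is the even-cycle bookkeeping, in particular tracking how $T$'s orientation of $\etilde$ decides whether $\etilde\in T_C^+$ or $\etilde\in T_C^-$, and hence which of the two signed-circuit obstructions it cancels. Both pieces rest on the cycle-structure lemma ruling out even cycles that avoid $\etilde$; once that is secured, the remaining argument is a direct accounting of standard monomials against the two families of initial monomials in Lemma~\ref{lem: initial terms}.
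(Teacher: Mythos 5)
Your proposal is correct and follows essentially the same route as the paper; the paper's proof is literally the single sentence ``This follows from Lemma~\ref{lem: initial terms},'' and you have supplied the argument that this sentence compresses. The crucial supporting facts you isolate --- that the only simple cycles of $\Gamma(n+1)$ avoiding $\etilde$ are the $n$ triangles (all odd), hence every even cycle contains $\etilde$; that under the stated grevlex order $\etilde$ is automatically the weakest element of any circuit containing it; and that, with $z$ weakest, the maximal standard squarefree monomials are exactly the $m_T$ for oriented spanning trees $T$ --- are all correct and are exactly what is needed to unwind Lemma~\ref{lem: initial terms} cycle by cycle. One small remark worth recording: the even-cycle condition you derive is in fact slightly sharper than the proposition's literal wording. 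You correctly conclude that when $\max(m_+,m_-)=|C|/2$ one needs $\etilde$ to lie in whichever of $T_C^+$, $T_C^-$ attains the maximum (so that every $k$-subset of that side is forced to contain the forbidden weakest element), whereas the proposition only says ``$\etilde\in T$.'' If $\etilde\in T$ but sits on the minority side, the majority side has $k$ edges none of which is $\etilde$, giving a forbidden initial monomial; so the proposition should be read with your refinement in mind. This is a genuine (if minor) imprecision in the statement, not a gap in your argument, and it is worth keeping your more careful phrasing rather than asserting a literal match.
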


\begin{proof}
    This follows from Lemma~\ref{lem: initial terms}. 
\end{proof}

See Figure~\ref{fig: example trees} for  examples of spanning trees of $\Gamma(4)$ satisfying the conditions of Proposition~\ref{prop: spanning tree characterization}.

\begin{figure}[h]
    \centering
    \includegraphics[scale=0.7]{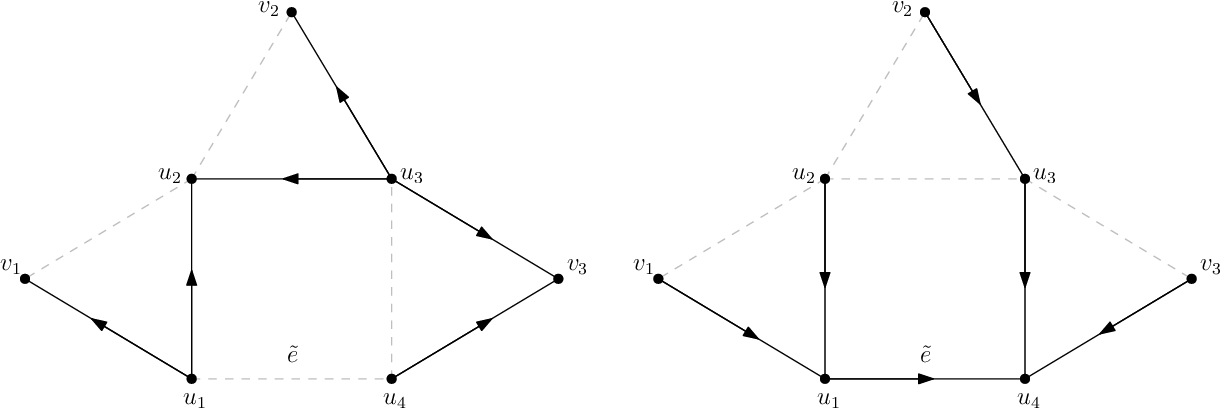}
    \caption{Two spanning trees of $\Gamma(4)$ that are members of the triangulating tree set, as described in Proposition~\ref{prop: spanning tree characterization}.}
    \label{fig: example trees}
\end{figure}

\begin{cor}\label{cor: chordless pair orientations}
    Suppose $T$ is an oriented spanning tree in the triangulating tree set. 
    Then the edges in any chordless pair of $T$ share the same tail or share the same head.
    Moreover, their orientations may be reversed to obtain another oriented spanning tree in the triangulating tree set. 
\end{cor}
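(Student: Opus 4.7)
The plan is to apply Proposition~\ref{prop: spanning tree characterization} to carefully chosen cycles of $\Gamma(n+1)$. For the first claim, the cycle to use is the triangle $C = u_i v_i u_{i+1}$, which has length $3$ and therefore threshold $\lfloor(|C|-1)/2\rfloor = 1$. A chordless pair in $T$ contributes exactly the two edges $u_iv_i$ and $v_iu_{i+1}$ of $C$ to $T$, while the chord $u_iu_{i+1}$ is absent from $T$. Fixing a cyclic traversal of $C$, the proposition forces at most one of these two edges to be oriented with the traversal. A short case check on the four possible orientation patterns of the pair then shows that the only admissible ones are those in which the edges share their tail at $v_i$ or share their head at $v_i$, since in the other two patterns both edges point with, or both point against, the cycle.

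For the moreover part, reversing the orientations of both edges in the chordless pair clearly leaves the underlying spanning tree unchanged, so the task is to verify that the orientation condition in Proposition~\ref{prop: spanning tree characterization} continues to hold for every cycle of $\Gamma(n+1)$. The structural observation that drives this is that $v_i$ has degree $2$ in $\Gamma(n+1)$, its only neighbors being $u_i$ and $u_{i+1}$. Consequently, any cycle of $\Gamma(n+1)$ that contains either $u_iv_i$ or $v_iu_{i+1}$ must in fact contain both and traverse them consecutively through $v_i$. In such a cycle the two edges are traversed in opposite senses relative to the cycle direction (one with, one against), both before and after the reversal; the reversal merely swaps which edge plays which role and therefore preserves the count of edges oriented in each direction around the cycle. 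Cycles that avoid $v_i$ are entirely unaffected. It follows that both the bound $\lfloor(|C|-1)/2\rfloor$ and, when $|C|$ is even, the equality case together with the condition $\etilde \in T$ are preserved, so the reversed tree remains in the dissecting tree set.

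I do not anticipate a substantial obstacle: both assertions reduce to the degree-$2$ behavior at $v_i$ combined with a direct application of Proposition~\ref{prop: spanning tree characterization}. The only subtlety worth flagging is that the even-$|C|$ alternative in the proposition is a conjunction of two conditions ($\etilde \in T$ and an exact equality on the orientation count), but the invariance argument above treats both invariants simultaneously and so requires no additional case analysis.
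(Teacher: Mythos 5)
Your argument for the first claim matches the paper exactly: both apply Proposition~\ref{prop: spanning tree characterization} to the triangle $u_i, v_i, u_{i+1}$, whose threshold is $\lfloor 2/2 \rfloor = 1$, and conclude that if the two edges of the chordless pair share neither tail nor head then both are oriented coherently around the triangle, which is forbidden.

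Where you go beyond the paper is the ``Moreover'' clause, which the paper's proof silently omits. Your argument there is correct and worth spelling out: since $v_i$ has degree $2$ in $\Gamma(n+1)$ (its only neighbors are $u_i$ and $u_{i+1}$), any cycle meeting $v_i$ must use both $u_iv_i$ and $v_iu_{i+1}$ consecutively, and after a chordless-pair orientation (shared head or shared tail at $v_i$) these two edges always contribute exactly one ``with'' and one ``against'' to any traversal of that cycle; reversing the pair merely swaps which edge is which, leaving the directional counts unchanged. Cycles avoiding $v_i$ see no change in $T$ at all. Since neither the edge set of $T$ nor the presence of $\etilde$ is altered, both branches of Proposition~\ref{prop: spanning tree characterization}, including the even-length equality case with $\etilde \in T$, are preserved. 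This is a clean, complete justification of a step the paper takes for granted, and it fits naturally with the later use of this corollary in Lemma~\ref{lem: feasible trees} and the case analysis preceding the proof of Theorem~\ref{thm: h star of gamma}.
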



\begin{proof}
If the edges in the chordless pair, say, $u_iv_i$ and $v_iu_{i+1}$, share neither the same tail nor the same head, then the $3$-cycle $u_i,v_i,u_{i+1}$ of $\Gamma(n+1)$ contains two edges that are oriented in the same direction.
\end{proof}

For a spanning tree $T$ of $\Gamma(n+1)$, let $c(T)$ denote the number of chords $u_iu_{i+1}$ of $T$ and let 
\[
    \epsilon(T) = 
        \begin{cases}
            1 & \text{ if } \etilde \in T \\
            0 & \text{ otherwise.}
        \end{cases}
\]

\begin{lem}\label{lem: feasible trees}
    Let $T$ be a member of our triangulating tree set. 
    \begin{enumerate}[label={\rm (\roman*)}]
        \item If $\epsilon(T)=1$, then $T$ contains a total of $n-1$ chordless pairs and chorded pairs of edges as well as exactly one unpaired edge. 
        \item If $\epsilon(T)=0$, then $T$ contains a total of $n$ chordless and chorded pairs of edges. Moreover, $c(T)$ must be always even in this case.  
    \end{enumerate}
\end{lem}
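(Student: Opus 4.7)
The plan is to split the statement into two kinds of assertion: the cardinality counts in (i) and (ii), which follow from pure edge-counting, and the parity of $c(T)$ in (ii), which is a global constraint extracted from Proposition~\ref{prop: spanning tree characterization} applied to the fundamental cycle of $\etilde$ with respect to $T$.

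For the counts, I would first note that each vertex $v_i$ has exactly the two neighbors $u_i$ and $u_{i+1}$, so any spanning tree $T$ must contain at least one of the edges $u_iv_i$ or $v_iu_{i+1}$; simultaneously, the triangle on $\{u_i,v_i,u_{i+1}\}$ cannot lie entirely in $T$. Consequently the contribution of triangle $i$ to $T$ is either an unpaired edge (one edge) or a pair, chordless or chorded, (two edges). Writing $p$ for the number of triangles contributing a pair and $u$ for the number contributing an unpaired edge, we have $p+u=n$, and the edge-count identity $2p+u+\epsilon(T)=|E(T)|=2n$ forces $p=n-\epsilon(T)$ and $u=\epsilon(T)$. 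This immediately gives (i) and the first assertion of (ii).

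For the parity statement, I would exploit that when $\epsilon(T)=0$ every triangle contributes a pair, so $T$ contains a unique path from $u_1$ to $u_{n+1}$ that passes through each triangle in turn. Through triangle $i$ the path has length $2$ via $u_i\to v_i\to u_{i+1}$ when the pair is chordless, and length $1$ via the chord $u_iu_{i+1}$ when the pair is chorded (the other edge of a chorded pair being a pendant leaf at $u_i$ or $u_{i+1}$). Summing over the $n$ triangles yields a path of length $2n-c(T)$, and adjoining $\etilde$ produces a cycle $C$ in $\Gamma(n+1)$ with $|C|=2n-c(T)+1$ and $T\cap C=C\setminus\{\etilde\}$, so $|T\cap C|=|C|-1$.

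It remains to apply Proposition~\ref{prop: spanning tree characterization} to $C$. Since $|T\cap C|=|C|-1$, for either direction of traversal of $C$ the numbers of edges of $T\cap C$ oriented along and against that direction sum to $|C|-1$, so in at least one direction $\lceil(|C|-1)/2\rceil$ edges of $T\cap C$ are aligned. If $|C|$ were even this would equal $|C|/2$, strictly exceeding the threshold $\lfloor(|C|-1)/2\rfloor=|C|/2-1$ from the first alternative of the proposition; and since $\etilde\notin T$, the second alternative---which requires $\etilde\in T$---also fails, placing $T$ outside the dissecting tree set, a contradiction. Hence $|C|=2n-c(T)+1$ must be odd, forcing $c(T)$ to be even. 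The one place to take care is the path-length computation, namely that each chorded pair connects $u_i$ and $u_{i+1}$ by the single chord edge with its partner dangling as a leaf, so that the triangle's contribution to the $u_1$-to-$u_{n+1}$ path collapses from $2$ to $1$; granted this, the length depends only on $c(T)$ and the rest of the argument is immediate.
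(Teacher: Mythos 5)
Your proof is correct, and the first part (the counts) is essentially the same idea as the paper's one-line remark that $|E(T)|=2n$, just made explicit via the observation that each of the $n$ triangles contributes a pair or an unpaired edge (never the lone chord, else $v_i$ is isolated, and never all three edges, else a cycle). The parity argument uses the same cycle $C$ as the paper does — the fundamental cycle of $\etilde$ with respect to $T$, of length $2n-c(T)+1$ — and reaches the same contradiction with Proposition~\ref{prop: spanning tree characterization}. However, your route to the contradiction is genuinely different and somewhat cleaner: you simply observe that $|T\cap C|=|C|-1$ and apply pigeonhole, so if $|C|$ were even some direction would carry at least $|C|/2>\lfloor(|C|-1)/2\rfloor$ edges, with the second alternative of the proposition ruled out by $\etilde\notin T$. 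The paper instead splits $T\cap C$ into chord edges and chordless-pair edges, uses Corollary~\ref{cor: chordless pair orientations} to conclude the chordless-pair edges are exactly balanced, and applies pigeonhole only to the $c(T)$ chords. Both work, but your version avoids invoking Corollary~\ref{cor: chordless pair orientations} at all, which is a small simplification worth noting; the only thing to be careful about (and you flag it) is the claim that the $u_1$-to-$u_{n+1}$ path crosses each triangle with length $1$ when chorded and $2$ when chordless, which holds because each $u_i$ with $2\le i\le n$ is a cut vertex of $\Gamma(n+1)-\etilde$.
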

\begin{proof}
    Note that each spanning tree of $\Gamma(n+1)$ has $2n$ edges. Hence, the statements on the numbers of chordless/chorded pairs and the unpaired edge follow. 
    Regarding the evenness of $c(T)$, suppose to the contrary that
    $\epsilon(T)=0$ and $c(T)=2a-1$ for some $a \in \ZZ_{>0}$. 
    By the Pigeonhole Principle, there are at least $a$ chords of $T$ that have the same direction; call these chords $S_1$.
    On the other hand, since there are $n-2a+1$ chordless pairs which, together, contain $2(n-2a+1)$ edges, we know by 
    Corollary~\ref{cor: chordless pair orientations} that exactly half of these edges will have one orientation and the other half will have the opposite orientation.
    Let $S_2$ be the edges among the chordless pairs whose orientations are in the same direction as the edges of $S_1$.

    Set $S = S_1 \cup S_2$ and note that $|S| \ge n-a+1$.
    Construct a cycle $C$ in $\Gamma(n+1)$ consisting of $\etilde$, all chordless pairs of $T$, and all $c(T) = 2a - 1$ chords of $T$. 
    In particular, $S \subseteq C$.
    By design, $C$ has $1 + 2(n-2a+1) + (2a-1) =2(n-a+1)$ edges.
    Since $S \subseteq C$, there are at least $n-a+1$ edges in $C$ orientated in the same direction.
    This contradicts Proposition~\ref{prop: spanning tree characterization} since, in order for $T$ to be part of a triangulating tree set, $T$ would have had to contain no more than
    \[
        \left\lfloor\frac{|C|-1}{2}\right\rfloor = \left\lfloor\frac{2n-2a+1}{2}\right\rfloor = n - a
    \]
    edges oriented in the same direction.
\end{proof}

At this point we will take time here to provide several definitions that will help with the full proof, which becomes somewhat technical. 

First, suppose $c(T)=0$ and $\epsilon(T)=0$ and let $T$ be a member of our triangulating tree set satisfying both equations. 
We know, then, that the (undirected) edges of $T$ consist entirely of chordless pairs.
Observe that each pair of edges $u_iv_i$, $v_iu_{i+1}$ can be given a valid orientation in exactly two ways: either both edges point toward $v_i$ or both edges point away from $v_i$ (see Corollary~\ref{cor: chordless pair orientations}).
In both cases, there will be $n$ edges that point away from $u_1$.
So, by Theorem~\ref{thm: h-star contributions} with $v=u_1$, each such oriented tree contributes a summand of $t^n$ in $h^*(\Sigma(\Gamma(n+1)); t)$. 
There are $2^n$ valid orientations of this tree, resulting in a summand of $(2t)^n$ in the $h^*$-polynomial.

In subsequent cases, it will be helpful to refer to two portions of $T$ relative to $u_1$.

\begin{defn}
   Let $T$ be a spanning tree of $\Gamma(n+1)$. 
   The \emph{left side} of $T$, denoted $L(T)$, is the component of $T - \etilde$ containing $u_1$.
   The \emph{right side} of $T$, denoted $R(T)$, is the component of $T$ induced by $V(\Gamma(n+1)) - (V(L(T)) - u_1)$.
\end{defn}

In Figure~\ref{fig: pairs}, $L(T)$ is the subgraph of $T$ induced by $\{u_1,u_2,u_3,v_1,v_2\}$ and $R(T)$ is the subgraph induced by $\{u_1,u_4,u_5,v_3,v_4\}$.
Notice in particular that if $\etilde \notin T$, then $L(T) = T$ and $R(T)$
is the edgeless graph on $\{u_1\}$.

Now, suppose $c(T) = 0$ and $\epsilon(T) = 1$ and let $T$ be a member of our triangulating tree set satisfying both equations.
We can establish orientations on any chordless pair as in the previous case, although there are now only $n-1$ of these pairs.
For any cycle containing $\etilde$ and the unpaired edge $e$, the orientations of $\etilde$ and $e$ should also be opposite; otherwise we can find at least $(n+1)$ edges with the same direction in the $(2n+1)$-cycle of $\Gamma(n+1)$ (see Corollary~\ref{cor: chordless pair orientations} and Lemma~\ref{lem: feasible trees}). 

The definitions of $L(T)$ and $R(T)$ then make it clear that $T$ can contribute any of the following summands to the $h^*$-polynomial, depending on the orientation and the location of the unpaired edge, $e$:
\begin{itemize}
    \item If $\etilde$ and $e$ are pointing away from each other and $e \in L(T)$, then $T$ contributes the summand $t^2(2t)^{n-1}$.
    \item If $\etilde$ and $e$ are pointing away from each other and $e \in R(T)$, then $T$ contributes the summand $t(2t)^{n-1}$.
    \item If $\etilde$ and $e$ are pointing towards each other and $e \in L(T)$, then $T$ contributes the summand $(2t)^{n-1}$.
    \item If $\etilde$ and $e$ are pointing towards each other and $e \in R(T)$, then $T$ contributes the summand $t(2t)^{n-1}$.
\end{itemize}
Thus, by considering all orientations on the edges of a spanning tree satisfying $c(T) = 0$ and $\epsilon(T) = 1$, the tree will contribute a summand of $(1+t)^2(2t)^{n-1}$.

An additional subtlety arises when $c(T) = 2$ and $\epsilon(T) = 0$.
We can repeat the same argument for chordless pairs as when $c(T) =\epsilon(T)=0$, but we now have only $n-2$ chordless pairs.
The orientations on the chordless pairs result in a factor of $(2t)^{n-2}$ in the contribution $T$ makes to the $h^*$-polynomial.
Such a factor occurs $\binom{n}{2} = \binom{n}{c(T)}$ times, since this is the number of chords $u_iu_{i+1}$ that can appear as parts of chorded pairs. 

This time we must account for the orientations on the edges of each chorded pair in relation to the other. 
Again, for each chorded pair, the orientation on the edge $u_iu_{i+1}$ dictates the orientation of the other edge in the pair: by Proposition~\ref{prop: spanning tree characterization}, they have the opposite direction. 
However, whether the chorded pair contains $u_iv_i$ or $v_iu_{i+1}$ will make a difference in the summand of the $h^*$-polynomial that $T$ contributes.

\begin{defn}
    We say that a chorded pair has \emph{type $\alpha$} if the non-chord edge is incident to the vertex that is closer to $u_1$ within $T$; otherwise, we say that a chorded pair has \emph{type $\beta$}.
\end{defn}

See Figure~\ref{fig: pairs} for an example of chorded pairs of each type.

If a chorded pair of $T$ has type $\alpha$, then one of the orientations of its edges results in a factor of $t^2$ in the summand contributed to the $h^*$-polynomial, while the other orientation results in a factor of $1$.
On the other hand, if a chorded pair has type $\beta$, then each of the orientations of its edges results in a factor of $t$.
So, if we consider all spanning trees $T$ of $\Gamma(n+1)$ satisfying $c(T) = 2$ and $\epsilon(T) = 0$, then we know that there are $p$ chorded pairs of type $\alpha$ in $L(T)$ and $q$ chorded pairs of type $\alpha$ in $R(T)$ for some nonnegative $p,q$ satisfying $p + q \leq 2$.
This leaves $2-(p+q)$ chords to be part of a chorded pair of type $\beta$.
This is where the set $S_k$ in \eqref{eq:formula} will come from: the pair $(p, q)$ will indicate the presence of $p$ chorded pairs of type $\alpha$ in $L(T)$ and $q$ of them in $R(T)$.

The final case we examine before proving Theorem~\ref{thm: h star of gamma} is when $c(T) = 1$ and $\epsilon(T) = 1$.
Recognize here that in any oriented spanning tree $T$ of $\Gamma(n+1)$, there is at most one unpaired edge.
Consequently, if an unpaired edge exists, then $\etilde \in T$ and the unpaired edge must have endpoints $u_i$ and $v_j$ for some $i$ and $j \in \{i,i+1\}$. 
Thus we may, without affecting the monomial that $T$ contributes, modify $T$ by deleting the vertex $v_j$, adding a new vertex $v_{n+1}$, and adding a new edge as follows:
\begin{itemize}
    \item If $\etilde$ and $u_iv_j$ are pointing away from each other and $u_iv_j \in L(T)$, then add the directed edge $u_1v_{n+1}$.
    \item If $\etilde$ and $u_iv_j$ are pointing away from each other and $u_iv_j \in R(T)$, then add the directed edge $u_{n+1}v_{n+1}$.
    \item If $\etilde$ and $u_iv_j$ are pointing towards each other and $u_iv_j \in L(T)$, then add the directed edge $v_{n+1}u_1$.
    \item If $\etilde$ and $u_iv_j$ are pointing towards each other and $u_iv_j \in R(T)$, then add the directed edge $v_{n+1}u_{n+1}$.
\end{itemize}
See Figure~\ref{fig: modified trees} for illustrations of each of these cases.

\begin{defn}
    Let $T$ be a (oriented) spanning tree of $\Gamma(n+1)$.
    If $\epsilon(T) = 1$, then denote by $\md(T)$ the tree described in the paragraph above.
    Otherwise, we set $\md(T) = T$.
    We call $\md(T)$ a \emph{modified (oriented) spanning tree} of $\Gamma(n+1)$.
\end{defn}

Modified spanning trees allow us to view $\etilde$ and $u_1v_{n+1}$ or $u_{n+1}v_{n+1}$ as a chorded pair, so that all edges in $\md(T)$ are part of a chorded or chordless pair.
Hence, $c(\md(T)) = c(T) + \epsilon(T)$.
Note that the definitions of $L(T)$ and $R(T)$ can be extended to modified spanning trees as well.
In fact, the number of edges pointing towards or away from $u_1$ in $L(T)$ is the same as in $L(\md(T))$; the same is true for $R(T)$ and $R(\md(T))$.
We are now prepared to present the proof of Theorem~\ref{thm: h star of gamma}.

\begin{figure}
    \begin{center}
        \includegraphics[scale=0.7]{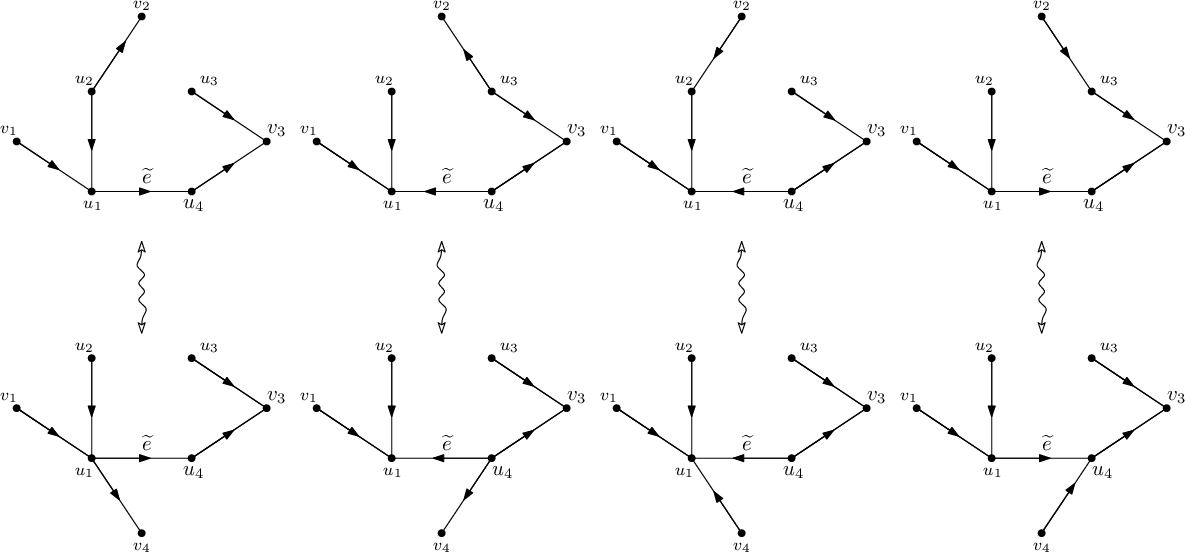}
    \end{center}
    \caption{Examples of modified spanning trees in $\Gamma(4)$.
    The original spanning trees and their corresponding modified trees are placed above and below each other.}\label{fig: modified trees}
\end{figure}



\begin{proof}[Proof of Theorem~\ref{thm: h star of gamma}]
    Fix an arbitrary $\ell$.
    We will treat the summand corresponding to $\ell$ in two pieces:
    \begin{equation}\label{eqn: proof first part}
        \sum_{(p,q) \in S_{2\ell}} (2t)^{n-2\ell}\binom{n}{2\ell}\binom{2\ell+1}{p+q+1}f_{p,q,\ell}(t)
    \end{equation}
    and
    \begin{equation}\label{eqn: proof second part}
        \sum_{(p,q) \in S_{2\ell}} (2t)^{n-2\ell-1}(1+t)^2\binom{n}{2\ell+1}\binom{2\ell+1}{p+q+1}f_{p,q,\ell}(t)
    \end{equation}
    since these will more closely resemble how we will identify how each oriented spanning tree in our triangulating tree set will contribute to the $h^*$-polynomial of $\Sigma(\Gamma(n+1))$.

    Importantly, it will initially be more convenient to enumerate the spanning trees in the triangulating tree set not by constructing each such $T$ directly, but by constructing the corresponding $\md(T)$.
    Namely, we first will collect all $\md(T)$ having $2\ell$ chords for which $T$ is in our triangulating tree set, some of which contain $\etilde$ and some of which do not.
    We will then see that the set of corresponding (unmodified) spanning trees will provide a total contribution of \eqref{eqn: proof first part} to the $h^*$-polynomial of $\Sigma(\Gamma(n+1))$.
    Then, we will enumerate the remaining modified spanning trees, meaning those satisfying $c(\md(T)) = 2\ell+1$, and show that the set of corresponding (unmodified) spanning trees provide a total contribution of \eqref{eqn: proof second part} to the $h^*$-polynomial of $\Sigma(\Gamma(n+1))$.
    Throughout, we use $p$ to denote the number of chorded pairs in 
    $L(\md(T))$ of type $\alpha$ and we use $q$ to denote the number of chorded pairs in 
    $R(\md(T))$ of type $\alpha$.

    First consider \eqref{eqn: proof first part}, that is, suppose $c(\md(T)) = 2\ell$.
    We will proceed by considering the following two cases. \\
    
    \noindent \textbf{Case 1: Construct all modified spanning trees for which $q \geq 1$.}
    In order to have $q \geq 1$, $R(\md(T))$ must be nontrivial, and so $\epsilon(T)=1$. 
    Also, there will be exactly one $i$, with $1 \leq i \leq n-1$, such that all of the edges $u_{n-i}u_{n-i+1}$, $u_{n-i}v_{n-i}$ and $v_{n-i}u_{n-i+1}$ are missing in $\md(T)$. 
    
    Fix an arbitrary $i$ and suppose first that $\etilde$ is part of a chorded pair of type $\alpha$.
    Necessarily, this pair will be in $R(\md(T))$.
    To construct the possible modified spanning trees with these properties, we may select $p$ of the chords $u_1u_2,\dots,u_{n-i-1}u_{n-i}$ to be edges in chorded pairs of type $\alpha$; these will all be part of $L(\md(T))$.
    There are $\binom{n-i-1}{p}$ such choices.
    We may then choose $q-1$ of the chords $u_{n-i+1}u_{n-i+2},\dots,u_nu_{n+1}$ to be part of chorded pairs of type $\alpha$, to ensure that $R(\md(T))$ contains a total of $q$ of them.
    There are $\binom{i}{q-1}$ such choices.
    
    Of the remaining $n-(p+q)$ pairs of vertices $u_j,u_{j+1}$ (which excludes $u_{n-i},u_{n-i+1}$), we must choose $2\ell-(p+q)$ of them to be part of chorded pairs of type $\beta$.
    So, we have constructed
    \begin{equation*}
        \sum_{i=0}^{n-1} \binom{n-i-1}{p}\binom{i}{q-1}\binom{n-(p+q)}{2\ell-(p+q)} 
    \end{equation*}
    trees.
    On the other hand, suppose $\etilde$ is part of a chorded pair of type $\beta$. Following an analogous argument, we obtain
    \begin{equation*}
        \sum_{i=0}^{n-1} \binom{n-i-1}{p}\binom{i}{q}\binom{n-1-(p+q)}{2\ell-1-(p+q)} 
    \end{equation*}
    modified spanning trees.
    Add these and apply 
    Lemma \ref{lem: breaking up coefficient} (i)
    to obtain
\[\sum_{i=0}^{n-1} \binom{n-i-1}{p}\binom{i}{q-1}\binom{n-(p+q)}{2\ell-(p+q)} + \sum_{i=0}^{n-1} \binom{n-i-1}{p}\binom{i}{q}\binom{n-1-(p+q)}{2\ell-1-(p+q)}
= \binom{n}{2\ell}\binom{2\ell +1}{p+q+1}
\]
    modified spanning trees in this case. \\

    \noindent \textbf{Case 2: Construct all modified spanning trees for which $q=0$.}
    This time, we proceed as in Case $1$, except there is no need to identify chords among $u_{n-i+1}u_{n-i+2},\dots,u_nu_{n+1}$ to be part of chorded pairs of type $\alpha$ in $R(\md(T))$.

    Suppose $\etilde$ is part of a chorded pair of type $\beta$ in $\md(T)$.
    Proceeding as before and using Lemma~\ref{lem:binom} (ii) and (iii), we obtain 
    \begin{eqnarray*}       
        \sum_{i=0}^{n-1} \binom{n-1-i}{p}\binom{n-1-p}{2\ell-1-p} &= &
        \binom{n-1-p}{2\ell-1-p} \sum_{i=0}^{n-1} \binom{0+i}{0} \binom{n-1-i}{p}\\
        &=& \binom{n-1-p}{2\ell-1-p} \binom{n}{p+1}\\
        &=& \binom{n}{2\ell}\binom{2\ell}{p+1}
    \end{eqnarray*}
    such trees.
    On the other hand, if $\etilde$ is not in $\md(T)$, then $R(\md(T))$ is the edgeless graph on $\{u_1\}$.
    There are $n$ edges $u_iu_{i+1}$ to select to be part of the chorded pairs, and of those, $p$ are chosen to be of type $\alpha$ and the rest must be of type $\beta$.
    In total, there are 
    \[
        \binom{n}{2\ell}\binom{2\ell}{p}
    \]
    modified spanning trees in this case. 
    All together, there are 
    \[
        \binom{n}{2\ell}\binom{2\ell}{p+1} + \binom{n}{2\ell}\binom{2\ell}{p} = \binom{n}{2\ell}\binom{2\ell+1}{p + q + 1}
    \]
    modified spanning trees in which $q=0$. 
    \\

    Since the numbers of modified spanning trees in Cases 1 and 2 follow the same formula, the total number of modified spanning trees over all values of $(p,q) \in S_{2\ell}$ which contribute to the $h^*$-polynomial of $\Sigma(\Gamma(n+1))$ for a fixed $\ell$ is indeed
    \begin{equation}\label{eq: modified spanning trees}
        \sum_{(p,q) \in S_{2\ell}} \binom{n}{2\ell}\binom{2\ell+1}{p + q + 1}.
    \end{equation}
    Since the modified spanning trees are in bijection with the spanning trees, \eqref{eq: modified spanning trees} is also the number of spanning trees of $\Gamma(n+1)$.
    
    We now study the summand contributed by each unmodified tree $T$ counted in \eqref{eq: modified spanning trees}, which is dependent on the parameters $p$ and $q$ describing $\md(T)$.
    From the structure of $\Gamma(n+1)$ and Proposition~\ref{prop: spanning tree characterization}, we will use the fact that, 
    for each of the edges $u_iu_{i+1}$ that are part of a chorded pair of $T$, exactly half of them must be 
    oriented in the same direction.
    To see why this fact holds, suppose that $m$ $(\ge \ell+1)$ chords are in the same direction.
    From Lemma \ref{lem: feasible trees} there are $n - 2 \ell$ chordless pairs and $\epsilon(T)$ unpaired edge(s).
    Let $C$ be the cycle of length $1+(2\ell-\epsilon(T)) + 2( n - 2 \ell)+ \epsilon(T)  = 2n-2\ell+1 $ consisting of $\widetilde{e}$, chords in chorded pairs, chordless pairs, and $u_i u_{i+1}$ where either $u_i v_i$ or $u_{i+1}v_i$ is an unpaired edge.
    By Corollary \ref{cor: chordless pair orientations}, at least $m+n - 2 \ell $ edges are oriented in the same direction.
    Then 
    \[
        \frac{|C|-1}{2} -  ( m+n - 2 \ell) =\ell - m <0.
    \]
    From Proposition~\ref{prop: spanning tree characterization}, this is a contradiction.
    The contribution to the $h^*$-polynomial then depends on whether the chorded pairs in $\md(T)$ are of type $\alpha$ or type $\beta$. 
    
    We can determine a valid orientation of each tree $T$ by considering which orientations satisfy Proposition~\ref{prop: spanning tree characterization}.
    If $T$ has $p$ chorded pairs of type $\alpha$ in $L(\md(T))$, we choose $i$ of them and orient the corresponding edges in $T$ away from $u_1$.
    Similarly, we choose $j$ of them from $R(\md(T))$ and orient the corresponding edges in $T$ away from $u_1$.
    There are $2\ell-p-q$ chorded pairs of $\md(T)$ remaining, all of type $\beta$; 
    we must orient the corresponding edges in $T$ so that exactly $\ell$ of the chords $u_ru_{r+1}$ (possibly including $\widetilde{e}$) are pointing in the same direction.
    By selecting which pairs are oriented so that the chord (possibly including $\widetilde{e}$) is pointing clockwise in our planar embedding of $\Gamma(n+1)$, this must be done in $\binom{2\ell-p-q}{\ell-(q-j)-i}$ ways, since $i+(q-j)$ of the edges have already been given clockwise orientations.

    For each of these (unmodified) trees, there will be $n-2\ell$ edges oriented away from $u_1$ that come from the chordless pairs, and this is true for either possible orientation of the pairs.
    From the paragraph above, there will be $2(i+j)$ edges pointing away from $u_1$ that come from chorded pairs of type $\alpha$ in the modified trees, and $2\ell-p-q$ edges pointing away from $u_1$ that come from chorded pairs of type $\beta$ in the modified trees.
    Summing over all $i$ and $j$ yields a summand of $(2t)^{n-2\ell}f_{p,q,\ell}(t)$
    in the computation of the $h^*$-polynomial.
    Multiplying this by \eqref{eq: modified spanning trees} and summing over all $(p,q) \in S_{2\ell}$, we obtain \eqref{eqn: proof first part}.

    Now, consider \eqref{eqn: proof second part}, that is, suppose $c(\md(T)) = 2\ell+1$.
    If $\widetilde{e}$ is not in $\md(T)$, then $\md(T) = T$ and so $c(T) = 2\ell+1$ as well.
    However, by Lemma~\ref{lem: feasible trees} (ii), to be a member of the triangulating tree set, $c(T)$ must be even.
    Thus, $\widetilde{e}$ is in $\md(T)$ and $\epsilon(T) = 1$.
    We can then apply an argument analogous to the one above, using Lemma~\ref{lem: breaking up coefficient} (ii), to show that there are 
    \[
        \binom{n}{2\ell+1}\binom{2\ell+1}{p+q+1}
    \]
    oriented spanning trees $T$ with $p$ and $q$ chorded pairs of type $\alpha$ in $L(\md(T))$ and $R(\md(T))$, respectively.
    Note that the argument holds for each possibility of whether $\etilde$ is in a chorded pair of type $\alpha$ or $\beta$ in $\md(T)$.
    So, we must multiply each of these by the sum of the $h^*$-polynomial contributions from these chorded pairs, which is $1+2t+t^2 = (1+t)^2$.
    This establishes \eqref{eqn: proof second part} and completes the proof.
\end{proof}



\section{Proof of Theorem~\ref{thm: gamma vec}}\label{sec: proof2}

To prove Theorem~\ref{thm: gamma vec}, we will use a generating function argument.
We will make regular use of the following pair of well-known identities:
\begin{equation}\label{eq: eqn1}
    \sum_{n \geq 0} \binom{n+k}{n} x^n =  \frac{1}{(1-x)^{k+1}}
\end{equation}
and
\begin{equation}\label{eq: eqn3}
    \sum_{n \geq 0} \binom{2n+m}{n} x^n =  \frac{1}{\sqrt{1-4x}}
    \left(\frac{1-\sqrt{1-4x}}{2x}\right)^m.
\end{equation}
Equation~\eqref{eq: eqn1} appears in \cite[Table 335]{GrahamKnuthPatashnik} and is valid for nonnegative integers $k$, while \eqref{eq: eqn3} appears as \cite[Equation (5.72)]{GrahamKnuthPatashnik} and holds for any integer $m$.

The first lemma we will need is the following.
\begin{lem}\label{hodai1}
    For all $0 \leq k \leq \ell$ we have
    \[
        \sum_{a = 0}^\ell(-1)^{k-a}4^{\ell-a} \binom{2a}{a} \binom{\ell-k}{\ell-a} = \frac{\binom{\ell}{k} \binom{2\ell}{\ell}}{ \binom{2\ell}{2k}}.
    \]    
\end{lem}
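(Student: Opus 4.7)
The plan is to express the sum as a single coefficient in the formal power series $(1-4y)^{m-1/2}$, where $m := \ell - k$, using identity (\ref{eq: eqn2}), and then evaluate this coefficient via the generalized binomial theorem.

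First, I would fold the signs using $(-1)^{k-a}4^{\ell-a} = (-1)^{k+\ell}(-4)^{\ell-a}$ and replace $\binom{\ell-k}{\ell-a}$ by $\binom{m}{\ell-a}$, which rewrites the left-hand side as $(-1)^{k+\ell}\sum_{a=0}^{\ell}\binom{2a}{a}\binom{m}{\ell-a}(-4)^{\ell-a}$. Identity (\ref{eq: eqn2}) gives $\sum_a \binom{2a}{a}y^a = (1-4y)^{-1/2}$, while the ordinary binomial theorem gives $(1-4y)^m = \sum_r \binom{m}{r}(-4)^r y^r$. The Cauchy product then identifies
\[
\sum_{a=0}^{\ell}\binom{2a}{a}\binom{m}{\ell-a}(-4)^{\ell-a} \;=\; [y^\ell]\,(1-4y)^m(1-4y)^{-1/2} \;=\; [y^\ell]\,(1-4y)^{m-1/2}.
\]

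Next, the generalized binomial theorem gives $[y^\ell](1-4y)^{m-1/2} = \binom{m-1/2}{\ell}(-4)^\ell$. I would then evaluate this generalized binomial coefficient by writing its $\ell$-fold falling factorial and splitting at the index where $m - 1/2 - i$ changes sign: the first $m$ factors contribute $(2m-1)!!/2^m$ and the remaining $k$ factors contribute $(-1)^k(2k-1)!!/2^k$. Combining these with $(-4)^\ell$ and using $(2n-1)!! = (2n)!/(2^n n!)$ produces $(-1)^m (2k)!(2m)!/(k!m!\ell!)$. Folding in the earlier prefactor $(-1)^{k+\ell}$ turns the total sign into $(-1)^{k+\ell+m} = (-1)^{2\ell} = 1$, so the left-hand side equals $(2k)!(2m)!/(k!m!\ell!)$, and a routine factorial manipulation confirms that this is precisely $\binom{\ell}{k}\binom{2\ell}{\ell}/\binom{2\ell}{2k}$. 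The only delicate point is the sign bookkeeping in evaluating $\binom{m-1/2}{\ell}$ when $\ell > m$; once that split is executed cleanly, the identity falls out at once.
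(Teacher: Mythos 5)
Your proof is correct, and it takes a more direct route than the paper's. The paper establishes the identity by computing the full bivariate generating function $\sum_{k,\ell}\bigl(\sum_{a}(-1)^{k-a}4^{\ell-a}\binom{2a}{a}\binom{\ell-k}{\ell-a}\bigr)x^k y^\ell$, showing it equals $\frac{1}{\sqrt{1-4y}\,(1+x-4xy)}$, and then Taylor-expanding that rational function in both variables. You instead fix $k$ and $\ell$, recognize the sum as the $y^\ell$-coefficient of a single Cauchy product $(1-4y)^m\cdot(1-4y)^{-1/2}=(1-4y)^{m-1/2}$ with $m=\ell-k$, and read off the coefficient via the generalized binomial theorem. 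Interestingly the two routes converge: after the paper extracts the $x^k$-coefficient from its bivariate answer it also lands on a half-integer power of $1-4y$ (namely $(-1)^k(1-4y)^{k-1/2}$, which is the same as your $(-1)^{k+\ell}(1-4y)^{m-1/2}$ since $\binom{\ell-k-1/2}{\ell}=(-1)^\ell\binom{k-1/2}{\ell}$), and both proofs must then unwind a falling factorial of a half-integer. What your version buys is the elimination of the bivariate packaging: the Cauchy-product observation replaces the paper's summation-interchange-and-geometric-series computation, so the identity reduces immediately to a single coefficient extraction. Your sign bookkeeping is sound: the split of the $\ell$-fold falling factorial of $m-\tfrac12$ into $m$ positive half-integer factors giving $(2m-1)!!/2^m$ and $k$ negative ones giving $(-1)^k(2k-1)!!/2^k$, combined with $(-4)^\ell=(-1)^\ell 2^\ell\cdot 2^\ell/2^\ell$ and $\ell=m+k$, correctly yields $(2m)!(2k)!/(m!\,k!\,\ell!)$, and a short factorial rearrangement shows this equals $\binom{\ell}{k}\binom{2\ell}{\ell}/\binom{2\ell}{2k}$.
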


\begin{proof}
    Using \eqref{eq: eqn1} and \eqref{eq: eqn3}, the latter in the case $m=0$, we compute the generating function for the left side of the identity:
    \[\begin{aligned}
        \sum_{k \ge 0} \sum_{\ell \ge0} \sum_{a = 0}^\ell(-1)^{k-a}4^{\ell-a}\binom{2a}{a} \binom{\ell-k}{\ell-a}x^k y^\ell
        &= \sum_{k \ge 0} x^k \sum_{a \ge 0}\sum_{\ell \ge a}(-1)^{k-a}4^{\ell-a} \binom{2a}{a} \binom{\ell-k}{\ell-a}y^\ell \\
        &= \sum_{k \ge 0} x^k \sum_{a \ge 0}(-1)^{k-a}\binom{2a}{a} \sum_{\ell \ge 0}4^{\ell}\binom{\ell+a-k}{\ell}y^{\ell+a}\\
        &=\sum_{k \ge 0} x^k \sum_{a \ge 0}(-1)^{k-a}\binom{2a}{a} \frac{y^a}{(1-4y)^{a-k+1}}\\
        &= \frac{1}{1-4y}\sum_{k \ge 0} \left(-x(1-4y)\right)^k\sum_{a \ge 0}\binom{2a}{a} \left(\frac{-y}{1-4y}\right)^a\\
        &= \frac{1}{(1-4y)(1-(-x(1-4y)))\sqrt{1-4 \frac{-y}{1-4y}}}\\
        &= \frac{1}{\sqrt{1-4y} \ (1+x-4xy)}.
    \end{aligned}\]
Applying a geometric series expansion to this rational function, we obtain
\begin{align*}
\frac{1}{\sqrt{1-4y} \ (1+x(1-4y))}
&=
\sum_{k\ge 0} (-1)^k (1-4y)^{k-\frac{1}{2}} \ x^k\\
&=
\sum_{k \ge 0} \sum_{\ell \ge 0} 
(-1)^k 
(-4)^\ell 
\prod_{j=0}^{l-1} \left(k - \frac{1}{2} - j\right)
x^k \frac{y^\ell}{\ell !}\\
&=
\sum_{k \ge 0} \sum_{\ell \ge 0} 
(-1)^k 
2^\ell 
\prod_{j=0}^{l-1} (2j-2k+1)
x^k \frac{y^\ell}{\ell !}\\
&=
\sum_{k \ge 0} \sum_{\ell \ge 0} 
\frac{(2\ell-2k)! (2k)!}{(\ell -k)! k!}
x^k \frac{y^\ell}{\ell !}\\
&=
\sum_{k \ge 0} \sum_{\ell \ge 0} 
\frac{\binom{\ell}{k} \binom{2\ell}{\ell}}{ \binom{2\ell}{2k}}
x^k  y^\ell.\qedhere
\end{align*}
\end{proof}

\begin{lem} \label{hodai2}
For all $k \geq 0$,
\[
    \binom{2k}{k}\frac{x^k}{(1-4x)^{k+\frac{3}{2}}} = \sum_{\ell \ge 0} \frac{2\ell+1}{2k+1}\binom{\ell}{k}\binom{2\ell}{\ell}x^\ell.
\]  
\end{lem}

\begin{proof}
By applying Taylor expansion and elementary algebra to the left side of the identity, we have
    \begin{align*}
\binom{2k}{k}
\frac{x^k}{(1-4x)^{k+\frac{3}{2}}}
&=
\binom{2k}{k} x^k
\sum_{m \ge 0}
4^m \prod_{j=0}^{m-1} \left(k + \frac{3}{2} + j\right)
\frac{x^m}{m!}\\
&=
\binom{2k}{k} x^k
\sum_{m \ge 0}
2^m \prod_{j=0}^{m-1} (2k+3 + 2j)
\frac{x^m}{m!}\\
&=
\binom{2k}{k} x^k
\sum_{m \ge 0}
2^m 
\frac{(2k +2m+1)!!}{(2k+1)!!}
\frac{x^m}{m!}\\
&=
x^k
\sum_{m \ge 0}
2^m 
\frac{(2k)!}{k!k!}
\frac{(2k +2m+2)!}{2^{k+m+1}(k +m+1)!}
\frac{2^{k+1} (k+1)!}{(2k+2)!}
\frac{x^m}{m!}\\
&=
x^k
\sum_{m \ge 0}
\frac{(2k +2m+2)!}{2(2k+1) (k +m+1)!k!m!} \ \ 
x^m\\
&=
\sum_{\ell \ge 0}
\frac{(2\ell+2)!}{2(2k +1) (\ell+1)!k!(\ell-k)!}
x^\ell\\
&=
\sum_{\ell \ge 0}
\frac{2\ell+1}{2k+1}
\binom{\ell}{k} \binom{2\ell}{\ell}
x^\ell. \qedhere
\end{align*}
\end{proof}

The last lemma we will need is the following.

\begin{lem} \label{hodai3}
For $\ell \geq 0$,
\[
    \sum_{p,q \ge 0}\binom{2 \ell +1}{p+q+1} x^p y^q = \frac{(x+1)^{2\ell+1} - (y+1)^{2\ell+1}}{x-y}.
\]
\end{lem}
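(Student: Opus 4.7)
The plan is to expand the right-hand side and read off coefficients of $x^p y^q$. Set $N = 2\ell+1$ and write $a = x+1$, $b = y+1$, so that $a - b = x - y$. The key observation is the finite geometric identity
\[
\frac{a^N - b^N}{a - b} = \sum_{k=0}^{N-1} a^{N-1-k} b^k,
\]
which cancels the apparent denominator and reduces the right-hand side to
\[
\sum_{k=0}^{2\ell} (x+1)^{2\ell - k} (y+1)^k.
\]

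Next, I would apply the binomial theorem to each of the two factors, giving
\[
\sum_{k=0}^{2\ell}\ \sum_{p \geq 0}\ \sum_{q \geq 0} \binom{2\ell-k}{p}\binom{k}{q}\, x^p y^q,
\]
and then exchange the order of summation so that $p$ and $q$ become the outer indices. The coefficient of $x^p y^q$ is the single sum $\sum_{k=0}^{2\ell}\binom{2\ell-k}{p}\binom{k}{q}$, and this is precisely an instance of Lemma~\ref{lem:binom}(ii) applied with $a=0$, $c = 2\ell$, $b = q$, $d = p$. That lemma evaluates the sum to $\binom{2\ell+1}{p+q+1}$, which matches the coefficient of $x^p y^q$ on the left-hand side.

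There is no serious obstacle: after the geometric-sum step the identity reduces to a single invocation of Vandermonde's convolution. The only bookkeeping point worth noting is that whenever $p + q > 2\ell$, both sides vanish (on the left because $\binom{2\ell+1}{p+q+1}=0$, on the right because $\binom{2\ell-k}{p}\binom{k}{q}=0$ for every $k$), so the sum over all $(p,q) \in \ZZ_{\geq 0}^2$ on the left matches the effective finite range produced by the binomial expansions on the right.
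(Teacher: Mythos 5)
Your proof is correct, but it takes a genuinely different route from the paper's. The paper multiplies the left-hand side by $x-y$, shifts indices in the two resulting double sums, and observes a telescoping cancellation: the overlap $\{p\geq 1,\ q\geq 1\}$ cancels, leaving $\sum_{p\geq 1}\binom{2\ell+1}{p}x^p - \sum_{q\geq 1}\binom{2\ell+1}{q}y^q = (x+1)^{2\ell+1}-(y+1)^{2\ell+1}$ directly, with no auxiliary identity needed. You instead start from the right-hand side, clear the denominator with the finite geometric sum $\frac{a^N-b^N}{a-b} = \sum_{k=0}^{N-1}a^{N-1-k}b^k$, binomially expand both factors, and close with one Vandermonde-type application of Lemma~\ref{lem:binom}(ii) (with $a=0$, $c=2\ell$, $b=q$, $d=p$, indeed giving $\binom{2\ell+1}{p+q+1}$). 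Both are valid; the paper's telescoping argument is leaner in that it sidesteps Vandermonde entirely, whereas your computation is arguably more transparent about where the coefficient $\binom{2\ell+1}{p+q+1}$ ``comes from,'' since it reduces the lemma to a single standard convolution already recorded in the paper's toolbox.
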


\begin{proof}
We again use a generating function argument, beginning with the left side of the desired identity multiplied by $x-y$:
\begin{align*}
(x-y)    \sum_{p,q \ge 0}
\binom{2 \ell +1}{p+q+1} x^p y^q 
&=
\sum_{p,q \ge 0}
\binom{2 \ell +1}{p+q+1} x^{p+1} y^q
-
\sum_{p,q \ge 0}
\binom{2 \ell +1}{p+q+1} x^p y^{q+1}\\
&= 
\sum_{p \ge 1}
\sum_{q \ge 0}
\binom{2 \ell +1}{p+q} x^{p} y^q
-
\sum_{p \ge 0}
\sum_{q \ge 1}
\binom{2 \ell +1}{p+q} x^p y^{q}\\
&=
\sum_{p \ge 1}
\binom{2 \ell +1}{p} x^{p}
-
\sum_{q \ge 1}
\binom{2 \ell +1}{q} y^{q}\\
&= 
\left(1+\sum_{p \ge 1}
\binom{2 \ell +1}{p} x^{p}\right)
-\left(1+
\sum_{q \ge 1}
\binom{2 \ell +1}{q} y^{q}\right)\\
&=(x+1)^{2\ell+1} - (y+1)^{2\ell+1}. \qedhere
\end{align*}
\end{proof}

\begin{thm}\label{thm: f to gamma}
For all $\ell \geq 0$ we have
    \[\sum_{p+q \le 2 \ell} \binom{2 \ell +1}{p+q+1}t^{2\ell -p-q} \sum_{i=0}^p \sum_{j=0}^q  \binom{p}{i} \binom{q}{j}\binom{2\ell -p-q}{\ell -q-i+j} t^{2i + 2j} = \sum_{a=0}^\ell \binom{2a}{a} t^a (t+1)^{4\ell -2a}.
    \]
\end{thm}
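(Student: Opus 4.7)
My plan is to simplify the left-hand side via coefficient extraction in an auxiliary variable $z$, collapse it to a compact rational expression using Lemma~\ref{hodai3}, and then reduce the remaining equality to a single identity involving central binomial coefficients.

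First, expanding $(1+z)^{2\ell-p-q}(1+t^2z)^p(z+t^2)^q$ identifies the inner double sum as
$$\sum_{i,j}\binom{p}{i}\binom{q}{j}\binom{2\ell-p-q}{\ell-q-i+j}t^{2i+2j}=[z^\ell]\bigl((1+z)^{2\ell-p-q}(1+t^2z)^p(z+t^2)^q\bigr).$$
After setting $X=(1+t^2z)/(t(1+z))$ and $Y=(z+t^2)/(t(1+z))$ and factoring $t^{2\ell-p-q}(1+z)^{2\ell-p-q}=(t(1+z))^{2\ell-p-q}$, each $(p,q)$-summand of the LHS becomes $(t(1+z))^{2\ell}\binom{2\ell+1}{p+q+1}X^p Y^q$. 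Summing over $(p,q)$ via Lemma~\ref{hodai3} and invoking the routine identities $1+X=(1+t)(1+tz)/(t(1+z))$, $1+Y=(1+t)(t+z)/(t(1+z))$, and $X-Y=(1-t^2)(1-z)/(t(1+z))$, the entire LHS collapses to
$$[z^\ell]\,\frac{(1+t)^{2\ell}\bigl[(1+tz)^{2\ell+1}-(t+z)^{2\ell+1}\bigr]}{(1-t)(1-z)}.$$
Using $1/(1-z)=\sum_{k\ge 0} z^k$ to turn $[z^\ell]$ into a partial $z$-coefficient sum, this evaluates to $(1+t)^{2\ell}\,\varphi_\ell(t)/(1-t)$, where
$$\varphi_\ell(t):=\sum_{k=0}^{\ell}\binom{2\ell+1}{k}(t^k-t^{2\ell+1-k}).$$

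It then remains to prove
$$(1-t)\sum_{a=0}^{\ell}\binom{2a}{a}\,t^a(1+t)^{2\ell-2a}=\varphi_\ell(t), \qquad (\star)$$
since multiplying $(\star)$ by $(1+t)^{2\ell}/(1-t)$ produces the RHS of the theorem. To establish $(\star)$, I substitute $u=t/(1+t)^2$ in $\sum_a\binom{2a}{a}u^a=1/\sqrt{1-4u}$ to obtain the formal power series identity $\sum_{a\ge 0}\binom{2a}{a}t^a(1+t)^{2\ell-2a}=(1+t)^{2\ell+1}/(1-t)$. For $r\le\ell$, the tail with $a>\ell$ has $t$-valuation $>\ell$, so $[t^r]$ of the partial sum matches $[t^r]$ of the full sum; multiplying by $(1-t)$ yields $[t^r]$ of the left side of $(\star)$ equal to $\binom{2\ell+1}{r}=[t^r]\varphi_\ell(t)$ (the $t^{2\ell+1-k}$ terms on the right contribute only in degrees $>\ell$). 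Since both sides of $(\star)$ are antipalindromic polynomials of degree $2\ell+1$ --- the left because it is $(1-t)$ times an obviously palindromic polynomial of degree $2\ell$, the right by its definition --- agreement in degrees $0,\dots,\ell$ forces equality throughout.

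The main obstacle is the bookkeeping in the initial algebraic collapse: with three factors depending on $p$, $q$, and $z$, arranging the substitution so that Lemma~\ref{hodai3} applies cleanly requires some care. The final identity $(\star)$ becomes elegant only because the antipalindromic symmetry lets one avoid a term-by-term comparison in the upper half of degrees, which would otherwise reduce to a nontrivial convolution identity for central binomials with partial row sums of Pascal's triangle.
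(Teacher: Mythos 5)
Your proof is correct, and it takes a genuinely different — and shorter — route than the paper's. The paper introduces generating functions in a fresh variable $\lambda$, defines an auxiliary doubly-indexed array $G_{\ell,m}$, applies \eqref{eq: eqn3} to collapse the inner sum over $m$ into a rational expression involving $L = (1-\sqrt{1-4\lambda})/(2\lambda)$, then invokes Lemma~\ref{hodai3} to handle the $(p,q)$-sum, and finally extracts the coefficient of $\lambda^\ell$ with the help of both Lemma~\ref{hodai1} and Lemma~\ref{hodai2} before matching against the generating function for the right-hand side. Your argument instead encodes the inner double sum as $[z^\ell]$ of a product of three binomial factors — a clean and correct identification — factors out $(t(1+z))^{2\ell}$, invokes Lemma~\ref{hodai3} directly with $X=(1+t^2z)/(t(1+z))$, $Y=(z+t^2)/(t(1+z))$, and reduces everything to the polynomial identity $(\star)$, which you settle by a one-line generating function substitution plus an antipalindromicity observation. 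This bypasses Lemma~\ref{hodai1}, Lemma~\ref{hodai2}, identity \eqref{eq: eqn3}, and all the $\sqrt{1-4\lambda}$ bookkeeping. The only places I paused were: (a) that the sum over $p+q\le 2\ell$ can be replaced by the unrestricted sum over $p,q\ge 0$ needed for Lemma~\ref{hodai3} (fine, since $\binom{2\ell+1}{p+q+1}$ vanishes for $p+q>2\ell$); (b) that the tail $a>\ell$ in the power series $\sum_a \binom{2a}{a}t^a(1+t)^{2\ell-2a}$ really has $t$-valuation $>\ell$, which holds because $(1+t)^{2\ell-2a}$ has constant term $1$ even when the exponent is negative; and (c) the antipalindromicity of $(1-t)\sum_{a=0}^\ell\binom{2a}{a}t^a(1+t)^{2\ell-2a}$, which follows since each $t^a(1+t)^{2\ell-2a}$ satisfies $t^{2\ell}f(1/t)=f(t)$. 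All three check out. Your approach is more self-contained and would let the authors drop Lemmas~\ref{hodai1} and \ref{hodai2} entirely; the paper's route has the minor advantage of producing a closed-form generating function $\sum_\ell F_\ell \lambda^\ell$ along the way, which could be of independent interest.
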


\begin{proof}
For notational convenience, set 
\[
F_\ell = \sum_{a=0}^\ell \binom{2a}{a} t^a (t+1)^{4\ell -2a}
\]
and
\[
G_{\ell,m} = 
\sum_{p+q \le 2 \ell} \binom{2 \ell +1}{p+q+1}
t^{2\ell -p-q} \sum_{i=0}^p \sum_{j=0}^q  \binom{p}{i} \binom{q}{j}
\binom{2m -p-q}{m -q-i+j} t^{2i + 2j}.\]
We compute the generating function for $F_{\ell}$ as follows:
\begin{eqnarray*}
\sum_{\ell \ge 0}  F_\ell  \lambda^\ell
&=&\sum_{\ell \ge 0} 
\sum_{a=0}^\ell \binom{2a}{a} t^a (t+1)^{4\ell -2a}
\lambda^\ell\\
&=&
\sum_{a\ge 0} 
\binom{2a}{a} t^a 
\sum_{\ell \ge a} 
(t+1)^{4\ell -2a} 
\lambda^\ell\\
&=&
\sum_{a\ge 0} 
\binom{2a}{a} t^a 
\frac{(t+1)^{2a} \lambda^a}{
1- (t+1)^4 \lambda
}\\
&=&
\frac{1}{
1- (t+1)^4 \lambda
}\sum_{a\ge 0} 
\binom{2a}{a} t^a 
(t+1)^{2a} \lambda^a\\
&=&
\frac{1}{
(1- (t+1)^4 \lambda)\sqrt{
1-4 t (t+1)^2 \lambda
}}.
\end{eqnarray*}
Hence it is enough to show that
\[
\sum_{\ell \ge 0} G_{\ell,\ell} \lambda^\ell
=\frac{1}{
(1- (t+1)^4 \lambda)\sqrt{
1-4 t (t+1)^2 \lambda
}}.
\]
Using Lemma \ref{hodai3}, we have
\[\begin{aligned}
\sum_{m \ge 0} G_{\ell, m} \lambda^m
&=
\sum_{m \ge 0}
\sum_{p, q \ge 0}
\binom{2 \ell +1}{p+q+1}
t^{2\ell -p-q} \sum_{i=0}^p \sum_{j=0}^q  \binom{p}{i} \binom{q}{j}
\binom{2 m -p-q}{m -q-i+j} t^{2i + 2j}
\lambda^m\\
&=
\sum_{p,q \ge 0}
\sum_{i=0}^p \sum_{j=0}^q  
\binom{p}{i} \binom{q}{j}
t^{2\ell -p-q + 2i + 2j}
%
\binom{2 \ell +1}{p+q+1}
%
\sum_{m \ge 0}
\binom{2m -p-q}{m -q-i+j} 
\lambda^m
\\
&=
\sum_{p,q \ge 0}
\sum_{i=0}^p \sum_{j=0}^q  
\binom{p}{i} \binom{q}{j}
t^{2\ell -p-q + 2i + 2j}
%
\binom{2 \ell +1}{p+q+1}
\lambda^{q + i -j}
\sum_{m \ge 0}
\binom{2m -p +q + 2i -2j}{m} 
\lambda^{m}
\\
&=
\sum_{p,q \ge 0}
\sum_{i=0}^p \sum_{j=0}^q  
\binom{p}{i} \binom{q}{j}
t^{2\ell -p-q + 2i + 2j}
%
\binom{2 \ell +1}{p+q+1}
\frac{ \lambda^{q + i -j}}{\sqrt{1 - 4 \lambda}}
\ \ L^{-p +q + 2i -2j}
\\
&=
\frac{t^{2\ell}}{\sqrt{1 - 4 \lambda}}
\sum_{p,q \ge 0}
\binom{2 \ell +1}{p+q+1}
\left(
\lambda L t + \frac{1}{tL}
\right)^p
\left(
\frac{\lambda L}{t}+\frac{t}{L}
\right)^q
\\
&=
\frac{t^{2\ell}}{\sqrt{1 - 4 \lambda}}
\frac{
(\lambda L t +\frac{1}{tL}+1)^{2\ell+1}
-
(\frac{\lambda L}{t}+\frac{t}{L}+1)^{2\ell+1}
}
{
(\lambda L t + \frac{1}{tL})-(\frac{\lambda L}{t}+\frac{t}{L})
},
\end{aligned}
\]
where $L =\frac{1-\sqrt{1 - 4 \lambda}}{2 \lambda}$.
It follows that $\frac{1}{L} = \frac{1+\sqrt{1 - 4 \lambda}}{2}$, that
\[\begin{aligned}
    \lambda L t +\frac{1}{tL}+1 &=  \frac{1-\sqrt{1 - 4 \lambda}}{2} t
+\frac{1+\sqrt{1 - 4 \lambda}}{2t } +1\\
&=
\frac{1}{2t} 
\left(\sqrt{1 - 4 \lambda} (1-t^2) +(t+1)^2 \right)\\
&=
\frac{t+1}{2t} 
\left(\sqrt{1 - 4 \lambda} (1-t) +(t+1) \right),
\end{aligned}\]
and that
\[\begin{aligned}
\frac{\lambda L}{t}+\frac{t}{L}+1
&=\frac{1-\sqrt{1 - 4 \lambda}}{2t}
+\frac{1+\sqrt{1 - 4 \lambda}}{2} t +1\\
&=
\frac{1}{2t} 
\left(\sqrt{1 - 4 \lambda} (t^2-1) + (t+1)^2 \right)\\
&=
\frac{t+1}{2t} 
\left(\sqrt{1 - 4 \lambda} (t-1) +(t+1) \right).
\end{aligned}
\]
Hence,
\[
\left(\lambda L t +\frac{1}{tL}\right)-\left(\frac{\lambda L}{t}+\frac{t}{L}\right) = 
\left(\lambda L t +\frac{1}{tL}+1\right)-\left(\frac{\lambda L}{t}+\frac{t}{L}+1\right) =
\frac{1-t^2}{t}
\sqrt{1-4\lambda}.
\]
Thus
\[
\begin{aligned}
    \sum_{m \ge 0} G_{\ell, m} \lambda^m
&=
\frac{(t+1)^{2\ell +1}}{2^{2 \ell +1} (1-t^2) (1 - 4 \lambda)}
\left(
\left(
\sqrt{1 - 4 \lambda} (1-t)+(t+1)
\right)^{2\ell+1}
+
\left(
\sqrt{1 - 4 \lambda} (1-t) -(t+1)
\right)^{2\ell+1}\right)\\
&=
\frac{(t+1)^{2\ell +1}}{2^{2 \ell} (1-t^2) (1 - 4 \lambda)}
\sum_{k\ge 0}
\binom{2\ell+1}{2k}
(\sqrt{1 - 4 \lambda} (1-t))^{2\ell+1-2k}
(t+1)^{2k}
\\
&=
\frac{(t+1)^{2\ell} (1-t)^{2\ell} }{2^{2 \ell} \sqrt{1 - 4 \lambda} }
\sum_{k\ge 0}
\binom{2\ell+1}{2k}
(1-4\lambda)^{\ell-k}
\left(
\frac{t+1}{1-t}
\right)
^{2k}
\\
&=
4^{-\ell} (t+1)^{2\ell} (1-t)^{2\ell}\sum_{a \ge 0}
\binom{2a}{a} \lambda^a
\sum_{k\ge 0}
\binom{2\ell+1}{2k}
(1-4\lambda)^{\ell-k}
\left(
\frac{t+1}{1-t}
\right)
^{2k}
\\
&=
4^{-\ell} (t+1)^{2\ell} (1-t)^{2\ell}
\sum_{a \ge 0}
\binom{2a}{a} \lambda^a
\sum_{k\ge 0}
\binom{2\ell+1}{2k}
\sum_{i=0}^{\ell-k}
\binom{\ell-k}{i}
(-4\lambda)^{i}
\left(
\frac{t+1}{1-t}
\right)
^{2k}.
\end{aligned}\]
Since $G_{\ell, \ell}$ is the coefficient of $\lambda^\ell$ in $\sum_{m \ge 0} G_{\ell, m} \lambda^m$,
we have
\[
G_{\ell, \ell} = 
4^{-\ell} (t+1)^{2\ell} (1-t)^{2\ell}\sum_{a \ge 0}
\sum_{k\ge 0}
\binom{2a}{a} 
\binom{2\ell+1}{2k}
\binom{\ell-k}{\ell-a}
(-4)^{\ell-a}
\left(
\frac{t+1}{1-t}
\right)
^{2k}.
\]
Additionally, from Lemma \ref{hodai1}, 
\[\begin{aligned}
G_{\ell, \ell}&=
4^{-\ell} 
(t+1)^{2\ell} (1-t)^{2\ell}
\sum_{k=0}^\ell
(-1)^{\ell-k}
\frac{2\ell+1}{2 \ell -2k+1}
\binom{\ell}{k} \binom{2\ell}{\ell}
\left(
\frac{t+1}{1-t}
\right)
^{2k}\\
&=
4^{-\ell} 
(t+1)^{2\ell} (1-t)^{2\ell}
\sum_{k=0}^\ell
(-1)^{k}
\frac{2\ell+1}{2k+1}
\binom{\ell}{k} \binom{2\ell}{\ell}
\left(
\frac{t+1}{1-t}
\right)
^{2\ell - 2k}.
\end{aligned}\]
Moreover, from Lemma~\ref{hodai2},
\[\begin{aligned}
\sum_{\ell \ge 0} G_{\ell, \ell} \lambda^\ell
&=
\sum_{\ell \ge 0} 4^{-\ell} 
(t+1)^{2\ell} (1-t)^{2\ell}\lambda^\ell
\sum_{k=0}^\ell
(-1)^{k}
\frac{2\ell+1}{2k+1}
\binom{\ell}{k} \binom{2\ell}{\ell}
\left(
\frac{t+1}{1-t}
\right)
^{2\ell - 2k} \\
&=
\sum_{k \ge 0}
(-1)^k
\left(
\frac{t+1}{1-t}
\right)
^{- 2k}
\sum_{\ell \ge 0} 
\frac{2\ell+1}{2k-1}
\binom{\ell}{k} \binom{2\ell}{\ell}
\left(
\frac{\lambda}{4}
(t+1)^4
\right)
^{\ell} \\
&=
\sum_{k \ge 0}
(-1)^k
\left(
\frac{t+1}{1-t}
\right)
^{- 2k}
\binom{2k}{k}
\frac{\left(
\frac{\lambda}{4}
(t+1)^4
\right)^k}{
\left(1 - (t+1)^4 \lambda \right)^{k+\frac{3}{2}}}.
\\
&=
\frac{1}{
\left(1 - (t+1)^4 \lambda \right)^{\frac{3}{2}}}
\sum_{k \ge 0}
\binom{2k}{k}
\left(
-
\left(
\frac{1-t}{t+1}
\right)^2
\frac{\frac{\lambda}{4}(t+1)^4}{1 - (t+1)^4 \lambda}
\right)^k\\
&=
\frac{1}{
\left(1 - (t+1)^4 \lambda \right)^{\frac{3}{2}}}
\sum_{k \ge 0}
\binom{2k}{k}
\left(
-\frac{\lambda (t+1)^2 (1-t)^2}{4(1 - (t+1)^4 \lambda)}
\right)^k\\
&=
\frac{1}{
\left(1 - (t+1)^4 \lambda \right)^{\frac{3}{2}}
\sqrt{1 +   \frac{ \lambda (t+1)^2 (1-t)^2}{1 - (t+1)^4 \lambda}}}\\
&=
\frac{1}{
(1- (t+1)^4 \lambda)\sqrt{
1-4 t (t+1)^2 \lambda
}},
\end{aligned}\]
as desired.
\end{proof}

\begin{proof}[Proof of Theorem~\ref{thm: gamma vec}]

Let $g_{n+1}(t)$ be the claimed formula for $\gamma(\Sigma(\Gamma(n+1));t)$:
\[
    g_{n+1}(t) = \sum_{\ell=0}^{\lfloor n/2 \rfloor}(2t)^{n-2\ell-1}\left(2\binom{n}{2\ell}t+\binom{n}{2\ell+1}\right)\sum_{a=0}^\ell\binom{2a}{a}t^a.
\]
To show that this is indeed the $\gamma$-polynomial for $\Sigma(\Gamma(n+1))$, we will show that
\[
    h^*(\Sigma(\Gamma(n+1)); t) = (1+t)^{2n}g_{n+1}\left(\frac{t}{(1+t)^2}\right)
\]
as in \eqref{eq: hstar gamma equivalence}.

First, observe that by simply distributing $(1+t)^{2n}$, we obtain
\[
    (1+t)^{2n}g_{n+1}\left(\frac{t}{(1+t)^2}\right) = \sum_{\ell=0}^{\lfloor n/2 \rfloor}(2t)^{n-2\ell-1}\left(2\binom{n}{2\ell}t+(1+t)^2\binom{n}{2\ell+1}\right)\sum_{a=0}^\ell\binom{2a}{a}t^a (1+t)^{4\ell - 2a}
\]
By substituting the formula from Theorem~\ref{thm: f to gamma} into the above equation, we indeed obtain $h^*(\Sigma(\Gamma(n+1));t)$.
Therefore, $g_{n+1}(t)$ is the $\gamma$-polynomial for $\Sigma(\Gamma(n+1))$, as desired.
\end{proof}

\section{Acknowledgements}

The authors would like to thank the anonymous referees for their thorough and thoughtful feedback.
Their comments led to substantial improvements in the exposition and clarity of this article.

\bibliographystyle{plain}
\bibliography{references}
\end{document}